\newtheorem{theorem}{Theorem}[section]
\newtheorem{proposition}[theorem]{Proposition}
\newtheorem{lemma}[theorem]{Lemma}
\newtheorem{corollary}[theorem]{Corollary}
\newtheorem{definition}[theorem]{Definition}
\newtheorem{remark}[theorem]{Remark}
\numberwithin{equation}{section}
\newcommand{\lra}{\longrightarrow}
\newcommand{\mc}{\mathcal}
\newcommand{\mcm}{{\mc M}_{n,d}} 
\newcommand{\mcu}{{\mc U}}
\newcommand{\mcux}{{\mc U}_{\xi}}
\newcommand{\mcw}{{\mc W}_{n,d}}
\newcommand{\mcpw}{{\mc {PW}}}
\newcommand{\mcmx}{{\mc M}_{n,\xi}}
\newcommand{\mcmxx}{{\mc M}'_{E_0,n,\xi}}
\newcommand{\mcmxxl}{{\mc M}'_{L_0,n,\xi}}
\newcommand\mcwx{{\mathcal{W}}_{n,\xi}}
\newcommand\mcwxp{{\mathcal{PW}}_{n,\xi}}
\newcommand\mcwd{{\mathcal{W}}_{1,d}}
\newcommand\picd{{\mathcal M}_{1,d}}
\newcommand\pico{{\operatorname{Pic}}^0(C)}
\newcommand\pic{\operatorname{Pic}}
\newcommand\mco{\mathcal{O}_C}
\newcommand{\rk}{\operatorname{rk}}
\newcommand{\mcmo}{{\mathcal M}_{n_0,d_0}}
\newcommand{\mcwoo}{{\mc W}_{n_0,d_0}}
\newcommand{\End}{\operatorname{End}}
\newcommand{\Hom}{\operatorname{Hom}}
\newcommand{\Image}{\operatorname{Im}}
\newcommand{\Coker}{\operatorname{Coker}}
\newcommand{\ch}{\operatorname{ch}^0}
\begin{document}

\baselineskip=15.5pt

\title[Generalised Picard sheaves]{Stability and deformations of generalised Picard sheaves}

\author{I. Biswas}

\address{School of Mathematics, Tata Institute of Fundamental Research,
Homi Bhaba Road, Mumbai 400005, India}

\email{indranil@math.tifr.res.in}

\author{L. Brambila-Paz}

\address{CIMAT A.C., Jalisco S/N, Mineral de Valenciana
C.P. 36023, Guanajuato, Gto, M\'{e}xico}

\email{lebp@cimat.mx}

\author{P. E. Newstead}

\address{Department of Mathematical Sciences, The University of
Liverpool, Peach Street, Liverpool, L69 7ZL, England}

\email{newstead@liverpool.ac.uk}

\subjclass[2010]{14H60, 14J60}

\date{\today}

\thanks{All authors are members of the international research group VBAC. The first author
acknowledges the support of a J. C. Bose Fellowship. The second author 
acknowledges the support of CONACYT grant 251938.}

\begin{abstract}
Let $C$ be a smooth irreducible complex projective curve of genus $g \geq  2$ and $M$ the moduli space of stable vector bundles on $C$ of rank $n$ and degree $d$ with $\gcd(n,d)=1$. 
A generalised Picard sheaf is the direct image on $M$ of the tensor product of a universal bundle on $M\times C$ by the pullback of a vector bundle $E_0$ on $C$. In this paper, we investigate the stability of generalised Picard sheaves and, in the case where these are locally free, their deformations. When $g\ge3$, $n\ge2$ (with some additional restrictions for $g=3,4$) and the rank and degree of $E_0$ are coprime, this leads to the construction of a fine moduli space for deformations of Picard bundles.
\end{abstract}

\maketitle

\section{Introduction}\label{intro}

Let $C $ be a smooth irreducible complex projective curve of genus 
$g\ge2$ and $\mcm$ the moduli space of stable bundles of rank $n$ and degree $d$ on $C$, where $\gcd(n,d)=1$. Denote by $\mcu$ a universal (or Poincar\'{e}) 
bundle over $\mcm\times C$, which will remain fixed unless otherwise stated (in general, $\mcu$ is determined up to tensoring by a line bundle lifted from $\mcm$). For any vector bundle $E_0$ on $C$, the torsion-free sheaf
\[\mcw(E_0):=p_{1*}(p_2^*(E_0)\otimes\mcu)\]
on $\mcm$ is called a \textit{generalised Picard sheaf} (for convenience, we shall say simply \textit{Picard sheaf}). A similar definition can be made when $\mcm$ is replaced by the fixed determinant moduli space $\mcmx$ for any line bundle $\xi$ of degree $d$ and $\mcu$ is replaced by a universal bundle $\mcux$ on $\mcmx\times C$ (we take the restriction of our chosen universal bundle $\mcu$ on $\mcm\times C$). This gives rise to a Picard sheaf
\[\mcwx(E_0):=p_{1*}(p_2^*(E_0)\otimes\mcux)\]
on $\mcmx$. 

Picard sheaves are closely related to Fourier-Mukai transforms. If $H^1(C,E_0\otimes E)=0$ for all $E\in\mcm$ (respectively, all $E\in\mcmx$), then $\mcw(E_0)$ (respectively, $\mcwx(E_0)$) is locally free and coincides with the Fourier-Mukai transform. In this case, the Picard sheaves may be referred to as \textit{Picard bundles}. This happens in particular when $E_0$ is stable of rank $n_0$ and degree $d_0$ and $nd_0+n_0d>n_0n(2g-2)$. Our aims in this paper are to obtain new results on the slope-stability of Picard sheaves and on the deformations of Picard bundles. 

When $n=1$ and $d\ge 2g-1$, the bundles $\mcwd({\mc O}_C)$ coincide with classical Picard 
bundles, which were introduced in projectivised form in \cite{m} in a very general setting. The Picard sheaves $\mcwd({\mc O}_C)$ were studied in \cite{s}. Picard bundles appear also in the work of Gunning \cite{g,g1} as analytic bundles associated with factors of automorphy. There is an extensive literature on the stability of Picard bundles when 
$E_0={\mc O}_C$ (see, for example, \cite{k}, \cite{el}, \cite{li}, \cite{bbgn}, 
\cite{bisbn2}, \cite{bep}); the results of these papers extend easily to the case where $E_0$ is an 
arbitrary line bundle (i.e., $n_0=1$ (see Theorem \ref{th11})). For $n_0\ge2$ and $n=1$, see \cite{br} and 
\cite{hp}. When $\gcd(n,d)\ne1$, it is possible to define a \textit{projective Picard bundle} 
$\mcwxp(E_0)$ on the Zariski open subset
\begin{equation}\label{e1}
\mcmxx\,=\, \{E\, \in\, \mcmx\,\mid\, h^1(E_0\otimes E)\,=\, 0\} \, \subset\, \mcmx
\end{equation}
(see \cite{bisbn2}); this subset is empty for $nd_0+n_0d\,<\, n_0n(g-1)$ by 
Riemann-Roch. If $E_0$ is semistable and $nd_0+n_0d\,>\,n_0n(2g-2)$, then $\mcmxx\,
=\,\mcmx$. It is of course 
possible to make similar definitions for $\mcm$. Projective Picard bundles on moduli spaces of 
symplectic and orthogonal bundles have been studied in \cite{bg1} and \cite{bg2}. Picard bundles on 
Prym varieties are discussed in \cite{bep}. A study of Picard bundles on nodal curves has been 
initiated in \cite{bp}; very recently, some of the results of \cite{bisbn2} have been generalised to nodal curves \cite{abk}.

Deformations of $\mcwd({\mc O}_C)$ were studied by Kempf \cite{k1} and Mukai \cite{mu}. When $\gcd(n,d)=1$, this was extended to $\mcwx(E_0)$ with $E_0$ semistable in \cite{bisbn1}. The main object of \cite{bisbn1} was to show that all deformations arise in a natural way from those of $E_0$.

Our first aim is to present a treatment of stability properties of Picard bundles and sheaves on $\mcm$
and $\mcmx$ with $E_0$ any stable bundle, including all known results and significant
new ones. When $\gcd(n,d)=1$, we also study deformations of the Picard bundles $\mcw(E_0)$, thus generalising the results of \cite{bisbn1} and showing that these deformations again arise in a natural way from those of $E_0$. For $n\ge2$, this leads to the construction of a smooth irreducible moduli space for Picard bundles and an identification of this moduli space with an open subset in a moduli space ${\mc M}_0(\mcm)$ of certain bundles on $\mcm$. When, in addition, $\gcd(n_0,d_0)=1$, this open subset is in fact an irreducible component of ${\mc M}_0(\mcm)$. Results are also obtained for $n=1$; these require an extension of the arguments of \cite{k1} and \cite{mu}.

Our most important new results on the stability of Picard sheaves are as follows (full statements of most of these results may be found in section \ref{statement}).
Let $\theta_{n,d}$ and $\theta_{n,\xi}$ be theta divisors
on $\mcm$ and $\mcmx$ respectively. Before stating our first theorem, we define, for any $E\in{\mc M}_{n,d+n}$, $L\in\pic^{d'+1}(C)$, morphisms
\[\phi_E: C\longrightarrow\mcm:\ \ p\longmapsto E(-p),\]
\[\alpha_{L,E}: \pic^{d'}(C)\longrightarrow\mcm:\ \ L_1\longmapsto E\otimes L^{-1}\otimes L_1.\]
These are analogous to the classical Abel-Jacobi map embedding $C$ in $\picd$. Note that
\begin{equation}\label{eq0}
\phi_E=\alpha_{L,E}\circ\phi_L.
\end{equation}

\begin{theorem}\label{th13} Let $E_0$ be a stable bundle of rank $n_0$ and degree $d_0$. If $\gcd(n,d)=1$ and
$nd_0+n_0d> n_0n(2g-1)$ (respectively, $\ge$), then, for any $E\in{\mc M}_{n,d+n}$ for which
$E_0\otimes E$ is stable, and any $L\in\pic^{d'+1}(C)$,
\begin{itemize} 
\item[(i)] $\phi _E^*\mcw(E_0)$ is stable (respectively, semistable);
\item[(ii)] $\alpha_{L,E}^*\mcw(E_0)$ is $\theta_{1,d'}$-stable (respectively, semistable).
 \end{itemize}
 This holds in particular for $n_0=1$ and for $n_0\ge2$ with $E\in{\mc M}_{n,d+n}$ general.
 \end{theorem}
 This theorem is proved in section \ref{butler} as Theorem \ref{c1}(b).
 
 Our next theorem concerns Picard sheaves on $\mcm$ when $n=1$ (see Theorem \ref{th1}(i)). 

\begin{theorem}\label{th11} Let $L_0$ be a line bundle of degree $d_0$ on $C$.
If $d_0+d\ge g$ and $C$ is general, then $\mcwd(L_0)$ is $\theta_{1,d}$-stable.
\end{theorem}

The case $d_0+d\ge2g-1$ is classical (note that ${\mc M}_{1,d}=\pic^d(C)$), even without the assumption that $C$ is general; the corresponding result for bundles $E_0$ of higher rank is also known (see Theorem \ref{th1}(ii)).

For $\mcmx$, a strong result is already known when $E_0$ is a line bundle (see Theorem \ref{th2}(i) for details). To generalise this for $n_0\ge2$, we need to extend the concept of $(\ell,m)$-stability introduced in \cite{NR0, NR} to torsion-free sheaves on $\mcmx$ when $\gcd(n,d)=1$ (see section \ref{mcmx} for details).

\begin{theorem}\label{th14} (Theorem \ref{th2}(ii)) Let $n\ge2$, $\gcd(n,d)=1$ and let $E_0$ be a stable bundle of rank $n_0\ge2$ and degree $d_0$ on $C$. If either $nd_0+n_0d>n_0n(2g-2)$ or $E_0$ 
is general and $nd_0+n_0d>n_0ng-n_0$, then $\mcwx(E_0)$ is $(0,-n_0+1)$-$\theta_{n,\xi}$-stable.
\end{theorem}

Our final result on stability is Theorem \ref{th3}(ii).  

\begin{theorem}\label{th12} Let $n\ge2$, $\gcd(n,d)=1$ and let $E_0$ be a stable bundle of rank $n_0\ge2$ and degree $d_0$ on $C$. If $nd_0+n_0d> n_0n(n+1)(g-1)+n_0$ (respectively, $\ge$),
then $\mcw(E_0)$ is $\theta_{n,d}$-stable (respectively, semistable).
\end{theorem}

When $E_0$ is a line bundle, a stronger form of this theorem is known, which we extend further in Theorem \ref{th3}(i).

We turn now to the consideration of deformations of Picard bundles. Suppose that $\gcd (n,d)=1$ and $nd_0+n_0d>n_0n(2g-2)$. Denote by $\ch$ the Chern character of the Picard bundles $\mcw(E_0)$ (respectively, $\mcwx(E_0)$). With a few possible exceptions for low genus and rank, the Picard bundle $\mcw(E_0)$ (respectively, $\mcwx(E_0)$) is simple for all $E_0\in\mcmo$ (see Lemma \ref{l11} and Corollary \ref{c2}, respectively \cite[Corollary 21]{bisbn1}). The formula $(L,E_0)\mapsto L\otimes \mcw(E_0)$ (respectively, $E_0\mapsto\mcwx(E_0)$) therefore defines a morphism
\[\beta:\pic^0(\mcm)\times\mcmo\lra{\mc M}_0(\mcm)\ \ \textrm{(respectively, }\beta_\xi:\mcmo\lra{\mc M}_0(\mcmx)),\]
where $\pic^0(\mcm)$ denotes the space of topologically trivial line bundles on $\mcm$ and ${\mc M}_0(\mcm)$ (respectively, ${\mc M}_0(\mcmx)$) is the moduli space of simple bundles on $\mcm$ (respectively, $\mcmx$) with Chern character $\ch$. Note that $\pic^0(\mcm)\cong\pico$.
In order to obtain good properties of these morphisms, it is necessary to study the deformations of Picard bundles. As already stated, this study was initiated by Kempf \cite{k1} and Mukai \cite{mu} in the case where $n=n_0=1$. Deformations of Picard bundles on ${\mcmx}$ were studied in \cite{bisbn1}. The first author and Ravindra, using Mukai's techniques, showed that, when $n=1$ and $\xi_0$ is any line bundle of degree $d_0$ on $C$, $\beta|\{{\mc O}_{\mcm}\}\times{\mc M}_{n_0,\xi_0}$ is an injective morphism \cite[Corollary 2.3]{br}.  Here we consider Picard bundles on ${\mcm}$ and prove in particular the following theorem (see Theorem \ref{th4}).

\begin{theorem}\label{th15} Suppose that
\[g\ge3,\ n\ge2,\ \gcd(n,d)=\gcd(n_0,d_0)=1,\ nd_0+n_0d>n_0n(2g-2)\]
and suppose further that, if $g=3$, then $n\ge4$ and, if $g=4$, then $n\ge3$. Then $\beta$ maps $\pic^0(\mcm)\times\mcmo$ isomorphically onto an irreducible component ${\mc M}_0^0(\mcm)$ of 
${\mc M}_0(\mcm)$ of dimension $g+n_0^2(g-1)+1$; hence ${\mc M}_0^0(\mcm)$ is isomorphic to $\pico\times\mcmo$ and is a fine moduli space for deformations of Picard bundles on $\mcm$ with Chern character $\ch$. If $nd_0+n_0d>n_0n(n+1)(g-1)+n_0$, then ${\mc M}_0^0(\mcm)$ is a component of the moduli space of $\theta_{n,d}$-stable bundles on $\mcm$ with Chern character $\ch$.
\end{theorem}

This theorem needs modifying when $n=1$. Let $N$ denote the bundle defined by the evaluation sequence
\[0\lra N^*\lra H^0(C,K_C)\otimes {\mc O}_C\stackrel{\operatorname{ev}}{\lra} K_C\lra0.\]
 The following theorem is a summary of Theorem \ref{th7}.       
 \begin{theorem}\label{th16}
 Suppose that $g\ge2$, $n_0\ge2$ and $d_0+n_0d>n_0(2g-2)$. Then the morphism $\beta:\pic({\mc M}_{1,d})\times\mcmo\to{\mc M}_0({\mc M}_{1,d})$ is injective. Moreover, if $C$ is non-hyperelliptic and $h^0(C,N\otimes\operatorname{ad}(E_0))=0$ for some $E_0\in\mcmo$, $\beta$ is an injective birational morphism from $\pic^0({\mc M}_{1,d})\times\mcmo$ to an irreducible component ${\mc M}_0^0({\mc M}_{1,d})$ of ${\mc M}_0({\mc M}_{1,d})$. If, in addition, $\gcd(n_0,d_0)=1$, $\beta$ is a bijective morphism onto ${\mc M}_0^0({\mc M}_{1,d})$.
\end{theorem}

It is of interest to note that, when $C$ is non-hyperelliptic, the condition $h^0(C,N\otimes\operatorname{ad}(E_0))=0$ is equivalent to the vanishing of the Koszul cohomology group 
\[K_{0,2}(C;\operatorname{ad}(E_0),K_C)\] 
(see Proposition \ref{prop5}). This expresses the surjectivity of the multiplication map
\[\mu_{E_0}:H^0(C,K_C)\otimes H^0(C,K_C\otimes E_0\otimes E_0^*)\lra H^0(C,K_C^2\otimes E_0\otimes E_0^*).\]

We show also that, at least for some values of $n_0$, $d_0$, $\beta$ does not map $\pic^0({\mc M}_{1,d})\times\mcmo$ isomorphically to ${\mc M}_0^0(\mcm)$ (Proposition \ref{prop2}). On the other hand, for general $C$ and general $E_0$, Montserrat Teixidor i Bigas has proved that $\mu_{E_0}$ is surjective \cite{t}; this leads to Corollary \ref{th8}.

The main results on stability, together with comments on the tools used, are stated in section \ref{statement}. In section \ref{butler}, we relate our problem to a conjecture of D. C. Butler; this result (Proposition \ref{prop0}) seems to be of interest in its own right and leads to an important result on the stability of $\phi_E^*(\mcw(E_0))$ and $\alpha^*_{L,E}(\mcw(E_0))$ (Theorem \ref{c1}). Sections \ref{picd}--\ref{mcm} are devoted to the Picard sheaves on the various moduli spaces. In section \ref{defo}, we consider deformations of Picard bundles on $\mcm$ for $n\ge2$ and prove a more precise version of Theorem \ref{th15} (Theorem \ref{th4}) covering also the case $\gcd(n_0,d_0)\ne1$. In section 8, we study deformations of Picard bundles on ${\mc M}_{1,d}$. We start with a statement of the main results of Kempf \cite{k1} and Mukai \cite{mu} for the case $n_0=1$ (Theorem \ref{th6}) and finish with a more detailed version of Theorem \ref{th16} (Theorem \ref{th7}) and proofs of Proposition \ref{prop2} and Corollary \ref{th8}. 

We assume throughout that $C$ is a smooth irreducible projective curve over ${\mathbb C}$ of genus $g\ge2$. The canonical line bundle on $C$ is denoted by $K_C$. For any coherent sheaf $F$ on $C$, we write $H^i(F)$ for $H^i(C,F)$ and $h^i(F)$ for the dimension of $H^i(C,F)$. For any sheaf $F$ on a scheme $X$ and any $p\in X$, we write $F_p$ for the fibre of $F$ at $p$. We also write $\ch$ for the Chern character of $\mcw(E_0)$ (or $\mcwx(E_0)$); $\ch$ depends only on $n_0$, $d_0$, $n$, $d$ and $\mcu$ (or $\mcux$) and these values will be clear from the context.

Our thanks are due to the referee of this paper and to the referee of a previous version for some useful comments. We thank also Montserrat Teixidor i Bigas for discussions on the surjectivity of $\mu_{E_0}$.

\section{Statement of results on stability of Picard sheaves}\label{statement}

In this section, we state the main theorems on stability of Picard sheaves and comment on the tools used to prove them. We include all the results of which we have knowledge and state in detail which are already known and which are new. Proofs will be given in sections 4 to 6.

\begin{theorem}\label{th1} Let $E_0$ be a vector bundle of rank $n_0$ and degree $d_0$ on $C$.
\begin{itemize}
\item[(i)] If $n_0=1$ and either $d_0+d\ge 2g-1$ or $d_0+d\ge g$ and $C$ is general, then $\mcwd(E_0)$ is $\theta_{1,d}$-stable.
\item[(ii)] If $n_0\ge2$, $E_0$ is stable and $d_0+n_0d> n_0(2g-1)$ (respectively, $\ge$), then $\mcwd(E_0)$ is $\theta_{1,d}$-stable (respectively, semistable).
\end{itemize}\end{theorem}

When 
$E_0=\mco$, Theorem \ref{th1}(i) is known for $d\ge2g-1$ \cite{k,el}.  Alternative proofs of (ii) are available \cite{br, hp}.

In the next theorem, we need the concept of stability for projective bundles. In fact, a projective bundle is $\theta$-stable if and only if the associated principal $\operatorname{PGL}$-bundle is $\theta$-stable. For further discussion of this, in our context, see \cite{bisbn2}.

\begin{theorem}\label{th2} Let $n\ge2$ and let $E_0$ be a vector bundle of rank $n_0$ and degree $d_0$ on $C$.
\begin{itemize}
\item[(i)] If $n_0=1$, $nd_0+d>n(g-1)$ and either $g\ge3$ or $g=2$ and $d$ is not a multiple of $n$, then $\mcwxp(E_0)$ is $\theta_{n,\xi}$-stable. Moreover, if in addition $\gcd(n,d)=1$, then $\mcwx(E_0)$ is $\theta_{n,\xi}$-stable.
\item[(ii)]  If $n_0\ge2$, $\gcd(n,d)=1$, $E_0$ is stable and either $nd_0+n_0d>n_0n(2g-2)$ or $E_0$ 
is general and $nd_0+n_0d>n_0ng-n_0$, then $\mcwx(E_0)$ is $(0,-n_0+1)$-$\theta_{n,\xi}$-stable.  
\end{itemize}\end {theorem}

If $n_0=1$ and $nd_0+d\le n(g-1)$, then $\mcwxp(E_0)=\emptyset$ and, when, in addition, $\gcd(n,d)=1$,
we have $\mcwx(E_0)=0$. 
So Theorem \ref{th2}(i) is best possible for $g\ge3$ (in which case it is known \cite{bisbn2}) and almost best possible for $g=2$. If 
$\gcd(n,d)=1$, Theorem \ref{th2}(i) was also proved in \cite{bbgn} when $nd_0+d>2n(g-1)$. The new results are (i) for $g=2$ and (ii); note that (ii) does not imply that $\mcwx(E_0)$ is $\theta_{n,\xi}$-stable.

\begin{theorem}\label{th3} Let $n\ge2$, $\gcd(n,d)=1$ and let $E_0$ be a vector bundle of rank $n_0$ and degree $d_0$ on $C$.
\begin{itemize}
\item[(i)] If $n_0=1$ and $nd_0+d\ge n(2g-1)$, then $\mcw(E_0)$ is $\theta_{n,d}$-stable.
\item[(ii)] If $n_0\ge2$, $E_0$ is stable and $nd_0+n_0d> n_0n(n+1)(g-1)+n_0$ (respectively, $\ge$),
then $\mcw(E_0)$ is $\theta_{n,d}$-stable (respectively, semistable).
\end{itemize}\end{theorem}

Theorem \ref{th3}(i) is known 
for $nd_0+d>2ng$ \cite{li}; (ii) is new.

 The main tools for the
proofs of the above results are the generalisation, and adaptation to our cases, of Lemmas 1.1 and 1.2 in \cite{el} and \cite[Lemma 2.8(2)]{li}, and the use of Hecke correspondences as in \cite{bbgn, bisbn2}. The first tool will be developed in Section \ref{butler}, where we relate our problem to a conjecture of D. C. Butler; no further tools are needed in Section \ref{picd}. Hecke correspondences are used in Section \ref{mcmx} and spectral curves in Section \ref{mcm}. 

\section{Picard sheaves and Butler's Conjecture}\label{butler}

In this section, we relate Picard sheaves on $\mcm$ to a conjecture of D. C. Butler \cite[Conjecture 2]{bu2}. Butler's conjecture is concerned with the following construction. Given a generated vector bundle $E$ of rank $n \geq 1$ and degree $d$ on $C$, we define a vector bundle $M_E$ by the exact sequence
\begin{equation}\label{eq1}
0\lra M_E\lra H^0(E)\otimes {\mc O}\lra E\lra 0.
\end{equation}
In \cite[Theorem 1.2]{bu}, Butler proved that, if $E$ is stable of degree $d>2ng$, then $M_E$ is stable. His conjecture (see \cite[Conjecture 2]{bu2}) is a generalisation of this. The form of this conjecture that is relevant for us asserts that, on a general curve, the bundle $M_E$ is stable for general stable $E$ of any degree. The following proposition is a generalisation of part of \cite[Lemma 1.1]{el} and of a result proved but not formally stated in \cite[p. 536]{li} and links Butler's Conjecture to our problem. The proof follows the same lines as that of \cite[Theorem 2.5]{li}.

\begin{proposition}\label{prop0}
Suppose that $\gcd(n,d)=1$. Let $E\in{\mc M}_{n,d+n}$ and let $E_0$ be a vector bundle on $C$ such that
$E_0\otimes E$ is stable and generated with $h^1(E_0\otimes E)=0$. Then
\[\phi^*_E(\mcw(E_0))\cong M_{E_0\otimes E}\otimes L'\]
for some line bundle $L'$ on $C$.
\end{proposition}

\begin{proof}
Let $\Delta$ be the diagonal of $C\times C$. The vector bundles $(\phi_E\times 1_C)^*(p_2^*(E_0)\otimes \mcu)$ and $p_2^*(E_0\otimes E)(-\Delta)$ coincide as families of stable bundles on $C$ with respect to $p_1$. It follows that there exists a line bundle $L'$ on $C$ such that
\begin{equation}\label{eq2}
p_2^*(E_0\otimes E)(-\Delta)\otimes p_1^*(L')\cong (\phi_E\times 1_C)^*(p_2^*(E_0)\otimes \mcu).
\end{equation}
Tensoring the exact sequence $$0\longrightarrow{\mc O}_{C\times C}(-\Delta)\longrightarrow
{\mc O}_{C\times C}\longrightarrow{\mc O}_\Delta\longrightarrow 0$$
by $p_2^*(E_0\otimes E)$ and taking direct images by $p_1$, we obtain an exact sequence
\begin{equation}\label{eq3}
0\lra p_{1*}(p_2^*(E_0\otimes E)(-\Delta))\lra H^0(E_0\otimes E)\otimes{\mco}\lra E_0\otimes E.
\end{equation}
Since $E_0\otimes E$ is generated, the right-hand map in \eqref{eq3} is surjective, so $M_{E_0\otimes E}\cong p_{1*}(p_2^*(E_0\otimes E)(-\Delta))$. Hence, by \eqref{eq2},
\begin{equation}\label{eq5}
M_{E_0\otimes E}\otimes L'\cong p_{1*}(\phi_E\times 1_C)^*(p_2^*(E_0)\otimes \mcu).
\end{equation}
Since $E_0\otimes E$ is generated and $h^1(E_0\otimes E)=0$, we have $h^1(E_0\otimes E)(-p)=0$ for all $p\in C$. Hence, by \cite[p.53, Corollary 3]{mf}, 
\[p_{1*}(\phi_E\times 1_C)^*(p_2^*(E_0)\otimes \mcu)\cong \phi_E^*(p_{1*}(p_2^*(E_0)\otimes\mcu))=\phi_E^*(\mcw(E_0)).\]
The result now follows from \eqref{eq5}.
\end{proof}

This leads in particular to the main theorem of this section. 

\begin{theorem}\label{c1}
Suppose that $\gcd(n,d)=1$, $E\in{\mc M}_{n,d+n}$, $L\in\pic^{d'+1}$ and $E_0\in{\mc M}_{n_0,d_0}$.

{\em (a)} If $E_0\otimes E$ is stable and generated with $h^1(E_0\otimes E)=0$ and $M_{E_0\otimes E}$ is stable (respectively, semistable), then
\begin{itemize}
\item[(i)] $\phi_E^*(\mcw(E_0))$ is stable (respectively, semistable);
\item[(ii)] $\alpha_{L,E}^*(\mcw(E_0))$ is $\theta_{1,d'}$-stable (respectively, semistable).
\end{itemize}

{\em (b)} If $nd_0+n_0d>n_0n(2g-1)$ (respectively, $\ge$), then (i) and (ii) hold for any $E\in{\mc M}_{n,d+n}$ for which $E_0\otimes E$ is stable, and in particular for general $E\in{\mc M}_{n,d+n}$.
\end{theorem}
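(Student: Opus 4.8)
The plan is to deduce part (a) directly from Proposition \ref{th1} and then to obtain part (b) by showing that the numerical hypothesis forces $E_0\otimes E$ to be generated with vanishing $h^1$ and $M_{E_0\otimes E}$ to be (semi)stable for general $E$, after which openness of (semi)stability extends this to all $E$ with $E_0\otimes E$ stable. For part (a)(i): by Proposition \ref{th1} we have $\phi_E^*(\mcw(E_0))\cong M_{E_0\otimes E}\otimes L'$ for some line bundle $L'$; tensoring by a line bundle does not affect stability or semistability, so (i) is immediate. For (a)(ii), I would use the factorisation \eqref{eq0}, $\phi_E=\alpha_{L,E}\circ\phi_L$, which gives $\phi_L^*(\alpha_{L,E}^*\mcw(E_0))\cong\phi_E^*\mcw(E_0)$. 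Here $\phi_L\colon C\to\pic^{d'}(C)$ is (a translate of) the Abel--Jacobi embedding, and the point is that the pullback along $\phi_L$ of a $\theta_{1,d'}$-(semi)stable bundle on the abelian variety $\pic^{d'}(C)$ is (semi)stable on $C$, and conversely a bundle on $\pic^{d'}(C)$ whose Abel--Jacobi restriction is (semi)stable is itself $\theta_{1,d'}$-(semi)stable — this is exactly the translation-invariance argument underlying \cite[Lemma 1.1]{el} (one restricts a hypothetical destabilising subsheaf, uses that any subsheaf of a bundle on an abelian variety can be moved by translation, and compares slopes computed against a symmetric ample class). So the stability of $M_{E_0\otimes E}\otimes L'$ on $C$ yields the $\theta_{1,d'}$-stability of $\alpha_{L,E}^*\mcw(E_0)$.

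For part (b), the first task is to verify the hypotheses of (a) under the assumption $nd_0+n_0d>n_0n(2g-1)$. The bundle $E_0\otimes E$ has rank $n_0n$ and degree $nd_0+n_0(d+n)=nd_0+n_0d+n_0n$, so its slope exceeds $n(2g-1)+n=n\cdot 2g > 2g-2$ when the strict inequality holds (and is $\ge 2g-1$ in the boundary case); thus $h^1(E_0\otimes E)=0$ by Serre duality for any $E$, and since the slope exceeds $2g-1$ when $E_0\otimes E$ is stable, $E_0\otimes E$ is generated (a stable bundle of slope $>2g-1$ on a curve of genus $g$ is globally generated). It remains to show that $M_{E_0\otimes E}$ is stable (resp. semistable) for general $E\in\mc M_{n,d+n}$ with $E_0\otimes E$ stable, and here I invoke Butler's theorem \cite[Theorem 1.2]{bu}: since $\deg(E_0\otimes E)=nd_0+n_0d+n_0n > n_0n(2g-1)+n_0n = 2n_0n g$, the degree hypothesis $d>2ng$ of Butler's theorem (with $n$ there equal to $n_0n$) is met, so $M_{E_0\otimes E}$ is stable whenever $E_0\otimes E$ is. Combined with part (a) this already gives (i) and (ii) for every $E$ with $E_0\otimes E$ stable, hence in particular for general $E$; the semistable variant follows by the same computation reading all strict inequalities as non-strict and noting $2ng$ becomes $\ge$ as well.

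The main obstacle I anticipate is not the numerical bookkeeping but making the comparison-of-slopes argument in (a)(ii) fully rigorous: one must be careful that the theta divisor $\theta_{1,d'}$ defines a translation-invariant (equivalently, symmetric after a translation) polarisation on $\pic^{d'}(C)$, that the Abel--Jacobi curve $\phi_L(C)$ generates $\pic^{d'}(C)$ as a group so that translates of $\phi_L(C)$ sweep out the whole variety, and that a destabilising subsheaf of $\alpha_{L,E}^*\mcw(E_0)$ would, after saturation and generic translation, restrict to a destabilising subsheaf on $C$ of the correct slope. This is standard but needs the genus hypothesis $g\ge 2$ (so that $\pic^{d'}(C)$ has positive dimension and $\phi_L$ is an embedding) and a clean statement of how degrees transform under $\phi_L^*$; I would isolate it as the technical heart of the argument, citing \cite{el} for the prototype. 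Everything else — Proposition \ref{th1}, invariance of stability under twisting by line bundles, global generation of high-slope stable bundles, and Butler's theorem — is either already available in the excerpt or entirely routine.
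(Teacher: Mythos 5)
Your proposal follows essentially the same route as the paper: part (a)(i) is read off from Proposition \ref{th1} (twisting by $L'$ being harmless), part (a)(ii) comes from the factorisation \eqref{eq0} together with the fact that a sheaf on $\pic^{d'}(C)$ whose restriction to a translate of the Abel--Jacobi curve is (semi)stable is $\theta_{1,d'}$-(semi)stable --- the paper isolates exactly this as Lemma \ref{l2}, proved via Lemma \ref{l1} (moving the curve off the non-locally-free locus of a destabilising subsheaf) and the Poincar\'e formula, which is the argument you sketch --- and part (b) reduces to Butler's theorem after checking that the slope of $E_0\otimes E$ exceeds $2g$ (not $n\cdot 2g$ as you wrote; you appear to have divided the degree by $n_0$ rather than by the rank $n_0n$, but the conclusions $h^1=0$, global generation, and $\deg(E_0\otimes E)>2n_0ng$ are all still correct). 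The aside that the pullback along $\phi_L$ of a $\theta_{1,d'}$-stable bundle is stable on $C$ is false in general, but you never use that direction, so it does no damage.

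There is one genuine gap: the clause ``in particular for general $E$'' in (b). Your argument establishes (i) and (ii) for every $E$ with $E_0\otimes E$ stable, and then asserts the general case follows ``in particular''; but this requires knowing that $E_0\otimes E$ \emph{is} stable for general $E\in{\mc M}_{n,d+n}$, which is not automatic --- the tensor product of two stable bundles is only polystable in general, and when $\gcd(nd_0+n_0d,n_0n)\neq 1$ it could a priori be properly polystable for every $E$. The paper devotes Lemma \ref{l10} to this point: one first observes polystability via Hermitian--Einstein connections, then shows simplicity for general $E$ by choosing $E$ with $\operatorname{ad}(E)$ stable and not isomorphic to any stable summand of $\operatorname{ad}(E_0)$ (using the argument of Subramanian and a deformation via the injectivity of $H^1(\operatorname{ad}(E))\to H^1(\operatorname{End}(\operatorname{End}(E)))$). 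Without some such argument the ``general $E$'' assertion of the theorem is unproved.
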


To prove this theorem, we need some lemmas. The first is \cite[Lemma 2.7]{li}.

\begin{lemma}\label{l1}
Let $A$ be an abelian variety, $B$ and $C$ subvarieties of $A$ satisfying $\dim B+\dim C<\dim A$. Then the set $U:=\{t\in A|(B+t)\cap C=\emptyset\}$ is a non-empty open subset of $A$.
\end{lemma}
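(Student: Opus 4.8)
The plan is to describe the complement of $U$ explicitly as the image of a morphism of complete varieties and then control its dimension. Observe that, for $t\in A$, the intersection $(B+t)\cap C$ is non-empty precisely when there exist $b\in B$ and $c\in C$ with $b+t=c$, that is, precisely when $t$ lies in the image of the difference morphism
\[
\mu\colon C\times B\longrightarrow A,\qquad (c,b)\longmapsto c-b,
\]
which is indeed a morphism since subtraction on the abelian variety $A$ is a morphism. Hence $A\setminus U=\mu(C\times B)$, and the whole statement reduces to showing that this image is a proper closed subset of $A$.

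First I would note that $C\times B$ is a projective (in particular complete) variety, being a closed subvariety of the projective variety $A\times A$; therefore its image $\mu(C\times B)$ is a closed subset of $A$, so $U$ is open. (If $B$ or $C$ is empty there is nothing to prove, since then $U=A$.)

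Next I would bound the dimension. Since $\mu(C\times B)$ is the image of a variety of dimension $\dim C+\dim B$, each irreducible component of $\mu(C\times B)$ has dimension at most $\dim C+\dim B<\dim A$. As $A$ is irreducible of dimension $\dim A$, it cannot coincide with such a finite union of lower-dimensional closed subsets; hence $\mu(C\times B)$ is a proper closed subset of $A$, and $U=A\setminus\mu(C\times B)$ is a non-empty Zariski open subset, as claimed.

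I do not anticipate any real obstacle: the argument is essentially formal. The only points needing mild care are the possible reducibility of $B$ and $C$ (handled component by component, using that $\dim$ of a variety is the maximum of the dimensions of its components) and the fact that closed subvarieties of the complete variety $A$ are complete, which is exactly what guarantees that $\mu$ is a closed map.
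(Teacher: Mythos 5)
Your proof is correct. The paper does not give its own argument for this lemma but simply cites \cite[Lemma 2.7]{li}; your argument via the difference morphism $\mu(c,b)=c-b$, the completeness of $C\times B$ (so the image is closed), and the dimension count (so the image is proper in the irreducible variety $A$) is the standard proof and is exactly what that reference does.
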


We use this lemma to generalise \cite[Lemma 1.2]{el} and \cite[Lemma 2.8(2)]{li} to torsion-free sheaves.

\begin{lemma}\label{l2}
Let $\mc E$ be a torsion-free sheaf on $\pic^{d'}(C)$. If $\phi^*_L(\mc E)$ is stable (respectively, semistable) for some $L\in\pic^{d'+1}(C)$, then $\mc E$ is $\theta_{1,d'}$-stable (respectively, semistable).
\end{lemma}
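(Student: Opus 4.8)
The plan is to pull everything back along the Abel--Jacobi-type map $\phi_L\colon C\to\pic^{d'}(C)$, $p\mapsto L(-p)$ (using the notation of the paper, this is $\alpha$-free since $\pic^{d'}(C)$ is $\mc M(1,d')$ and $\phi_L$ is the degree-one case of $\phi_E$), and to transfer slope inequalities between $\mc E$ and $\phi_L^*(\mc E)$ by exploiting the fact that translates of $\phi_L(C)$ sweep out $\pic^{d'}(C)$ and that a theta divisor restricts to a divisor of degree $g$ on $\phi_L(C)$. Concretely: suppose $\mc E$ is not $\theta_{1,d'}$-(semi)stable, so there is a subsheaf $\mc F\subset\mc E$ with $\mu_{\theta_{1,d'}}(\mc F)\ge\mu_{\theta_{1,d'}}(\mc E)$ (with strict inequality in the ``stable'' case for some proper nonzero $\mc F$). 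I would first replace $\mc F$ by its saturation so that the quotient is torsion-free, and then observe that for a general point $t$ of $\pico$ the translated curve $t+\phi_L(C)$ avoids the (codimension $\ge 2$) singular loci of $\mc F$, $\mc E$ and $\mc E/\mc F$ and meets them nicely, so that restricting the inclusion $\mc F\hookrightarrow\mc E$ to $t+\phi_L(C)$ stays injective with locally free restrictions. This is exactly the place where Lemma \ref{l1} is used, with $A=\pico$, $B=\phi_L(C)$ and $C$ the relevant bad locus.

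The second step is the numerical bookkeeping. For a torsion-free sheaf $\mc G$ on $\pic^{d'}(C)$, its $\theta_{1,d'}$-degree is $(c_1(\mc G)\cdot\theta_{1,d'}^{\,g-1})$, and restricting to a curve $Y=t+\phi_L(C)$ in the class of a multiple of $\theta_{1,d'}^{\,g-1}$ (up to the standard constant $(g-1)!$ or $g!$, which cancels in slope comparisons) gives $\deg(\mc G|_Y)$ proportional to the $\theta$-degree, while the rank is unchanged. Hence $\mu_{\theta_{1,d'}}(\mc F)\ge\mu_{\theta_{1,d'}}(\mc E)$ forces $\mu(\mc F|_Y)\ge\mu(\mc E|_Y)$ for general $t$, contradicting the (semi)stability of $\mc E|_Y$ once we know $\mc E|_Y\cong\phi_L^*(\mc E)$ up to translation; in the strictly stable case the strict inequality is preserved because the restriction $\mc F|_Y$ is a proper nonzero subsheaf of $\mc E|_Y$. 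The identification $\mc E|_{t+\phi_L(C)}\cong(\phi_{L'}^*\mc E)$ for a suitable $L'\in\pic^{d'+1}$ comes from translation invariance: translation by $t$ is an automorphism of $\pic^{d'}(C)$ carrying $\phi_L(C)$ to $\phi_{L'}(C)$ with $L'=L\otimes\mc O_C(\text{divisor of }t)$, and $\phi_{L'}^*(\mc E)\cong\phi_L^*(\tau_t^*\mc E)$; one then needs that (semi)stability of $\phi_L^*(\mc E)$ for one $L$ forces it for all $L$ of the same degree, which follows because the $\phi_{L'}$ differ by translations and $\mc E$ is translation-invariant up to the choice of normalisation of $\theta_{1,d'}$ — more directly, stability is an open condition, so it holds for general $L'$, and that is all we need here.

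I expect the main obstacle to be the torsion-freeness/restriction argument rather than the numerics: one must be careful that restricting to a general translate $t+\phi_L(C)$ genuinely preserves the inclusion $\mc F\subset\mc E$ and computes ranks and degrees correctly when $\mc F$ and $\mc E/\mc F$ are merely torsion-free, not locally free. The clean way to handle this is to work on the open set $V\subset\pic^{d'}(C)$ where $\mc F$, $\mc E$ and $\mc E/\mc F$ are all locally free (its complement has codimension $\ge 2$), apply Lemma \ref{l1} to ensure $t+\phi_L(C)\subset V$ for general $t$, and then do all degree computations there; since $\theta$-degrees are computed by $c_1$ and are insensitive to what happens in codimension $2$, nothing is lost. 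Once this is set up, the slope comparison and the contradiction with stability of $\phi_L^*(\mc E)$ are immediate, and the ``semistable'' variant is the same argument with non-strict inequalities throughout.
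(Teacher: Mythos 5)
Your proposal is correct and follows essentially the same route as the paper's proof: use Lemma \ref{l1} to move to a general translate/general $L'\in\pic^{d'+1}(C)$ avoiding the codimension-$\ge 2$ locus where $\mc E$, the saturated subsheaf and the quotient fail to be locally free, invoke openness of stability to keep $\phi_{L'}^*(\mc E)$ stable, and convert the resulting slope inequality on the curve into the $\theta_{1,d'}$-slope inequality via the Poincar\'e formula. The only cosmetic difference is that you argue by contradiction where the paper argues directly.
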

\begin{proof}
Let ${\mc F}$ be a proper torsion-free subsheaf of ${\mc E}$ such that ${\mc E}/{\mc F}$ is also torsion-free. The set of points of $\pic^{d'}(C)$ at which at least one of ${\mc E}$, $\mc F$ and ${\mc E}/{\mc F}$ fails to be locally free is a closed subset of codimension at least $2$. It follows from Lemma \ref{l1} that there is an open set $U\subset\pic^{d'+1}(C)$ such that, for all $L\in U$, $\phi^*_L(\mc E)$ is a vector bundle and $\phi^*_L(\mc F)$ is a proper subbundle. Since stability is an open condition, the hypotheses of the lemma allow us to assume that $\phi^*_L(\mc E)$ is also stable. Hence
\[c_1(\phi^*_L(\mc F))/\rk(\phi^*_L(\mc F))<c_1(\phi^*_L(\mc E))/\rk(\phi^*_L(\mc E)),\]
or, equivalently,
\[\phi_L(C)\cdot c_1({\mc F})/\rk(\mc F)<\phi_L(C)\cdot c_1({\mc E})/\rk(\mc E).\]
Since $\phi_L(C)$ is cohomologically equivalent to $c_1(\theta_{1,d'})^{g-1}/(g-1)!$ by the Poincar\'e formula, this is just the $\theta_{1,d'}$-stability condition for ${\mc E}$.

For the semistable version, we simply replace $<$ by $\le$.
\end{proof}

\begin{proof}[Proof of Theorem \ref{c1}] (a) (i) is immediate from Proposition \ref{prop0}. (ii) then follows from \eqref{eq0} and Lemma \ref{l2}.

(b)  By \cite[Theorem 3.10]{ct} (attributed to S. Ramanan), the bundle $E_0\otimes E$ is stable for general $E$. Moreover, if $nd_0+n_0d\ge n_0n(2g-1)$, then $E_0\otimes E$ is generated and $h^1(E_0\otimes E)=0$. It follows from \cite[Theorem 1.2]{bu} that $M_{E_0\otimes E}$ is semistable and is stable if $nd_0+n_0d>n_0n(2g-1)$. So (b) follows from (a).
\end{proof}

In order to apply Theorem \ref{c1} to our problem, we need to relate the stability of $\mcw(E_0)$ to that of $\phi_E^*(\mcw(E_0))$ and $\alpha_{L,E}^*(\mcw(E_0))$. It will turn out that this is easy when $n=1$ but more difficult for $n\ge2$.

\section{Picard sheaves on $\picd$}\label{picd}

In this section, the following propositions will prove Theorem \ref{th1}. 
Note that, when $n=1$, we can take $E=L\in\pic^{d+1}(C)$. It follows that $\alpha_{L,E}=1_{\picd}$.

\begin{proposition}\label{l4} Suppose that $E_0\in\mcmo$.
If $d_0+n_0d> n_0(2g-1)$ (respectively, $\ge$), then $\mcwd(E_0)$ is $\theta_{1,d}$-stable (respectively, semistable).
\end{proposition}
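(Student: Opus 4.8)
The plan is to reduce the statement to the case $n=1$ of Theorem \ref{c1}, which is exactly the situation in which the relation between $\mcwd(E_0)$ and its pullbacks along the Abel--Jacobi map is transparent. Since $n=1$, the morphism $\alpha_{L,E}$ is the identity on $\picd$, and for $E=L\in\pic^{d+1}(C)$ the morphism $\phi_E=\phi_L\colon C\to\picd$ is the classical Abel--Jacobi embedding; thus $\phi_L^*(\mcwd(E_0))=\phi_L^*\mcw(E_0)$ in the notation of Theorem \ref{c1}. So the first step is to invoke Theorem \ref{c1}(b) with $n=1$: its hypothesis becomes $d_0+n_0d>n_0(2g-1)$ (respectively $\ge$), which is precisely our numerical assumption. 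One must check that $E_0\otimes L$ is stable for the chosen $L$ — but $E_0$ is stable and $L$ is a line bundle, so $E_0\otimes L$ is automatically stable, and no genericity is needed here. Hence $\phi_L^*(\mcwd(E_0))$ is stable (respectively semistable).

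The second step is to climb back up from $C$ to $\picd$. Here I would use Lemma \ref{l2} verbatim: it says that if $\phi_L^*(\mc E)$ is stable (respectively semistable) for some $L\in\pic^{d'+1}(C)$, then $\mc E$ is $\theta_{1,d'}$-stable (respectively semistable), for $\mc E$ any torsion-free sheaf on $\pic^{d'}(C)$. Applying this with $d'=d$ and $\mc E=\mcwd(E_0)$ — which is torsion-free by construction as a pushforward of a locally free sheaf under a flat projective morphism — immediately yields the desired $\theta_{1,d}$-(semi)stability. In other words, the proposition is simply the composite ``Theorem \ref{c1}(b) with $n=1$, then Lemma \ref{l2}''.

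The only point requiring a little care is the identification $\phi_L^*(\mcwd(E_0))=M_{E_0\otimes L}\otimes L'$ underlying Theorem \ref{c1}(a)(i): Proposition \ref{th1} is stated for $\mcw(E_0)$ on $\mcm$ with a universal bundle $\mcu$, whereas here we work on $\picd=\mc M(1,d)$ with the Poincar\'e bundle $\mcp$. One should note that the proof of Proposition \ref{th1} goes through unchanged in the rank-one case: $(\phi_L\times 1_C)^*(p_2^*(E_0)\otimes\mcp)$ and $p_2^*(E_0\otimes L)(-\Delta)$ agree as families of line bundles (here, of rank-one objects, hence automatically ``stable'') parametrised by $C$, so they differ by a pullback $p_1^*(L')$, and the base-change argument identifying $M_{E_0\otimes L}$ with $p_{1*}(p_2^*(E_0\otimes L)(-\Delta))$ uses only that $E_0\otimes L$ is generated with $h^1(E_0\otimes L)=0$, which holds under $d_0+n_0d\ge n_0(2g-1)$ by Riemann--Roch and the base-point-free pencil criterion. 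I expect this bookkeeping — making sure the $\picd$-version of Proposition \ref{th1} and the genericity-free input of Lemma \ref{l10}/Remark \ref{r4} are legitimately available in rank one — to be the only genuine, if minor, obstacle; everything else is a direct citation.
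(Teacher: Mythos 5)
Your proposal is correct and follows essentially the same route as the paper: the paper's proof of Proposition \ref{l4} also reduces to Theorem \ref{c1} (it cites part (a)(ii) after checking directly that $E_0\otimes L$ is stable and generated with $h^1=0$ and that $M_{E_0\otimes L}$ is (semi)stable by Butler's theorem, which is exactly the content of part (b) in the case $n=1$) together with the observation $\alpha_{L,L}=1_{\picd}$. Your extra invocation of Lemma \ref{l2} merely re-derives (ii) from (i) as in the proof of Theorem \ref{c1}(a), and your bookkeeping about the rank-one case of Proposition \ref{th1} is the same identification $\picd=\mathcal{M}(1,d)$ that the paper uses implicitly.
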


\begin{proof}
In this case, $E_0\otimes L$ is certainly stable. The result follows at once from Theorem \ref{c1}(b) and the fact that $\alpha_{L,L}=1_{\picd}$.
\end{proof}

\begin{remark}\label{r0}\begin{em}
If $d_0+d=2g-1$, it is in fact true that $\mcwd(L_0)$ is $\theta_{1,d}$-stable. This follows from \cite{k}, where the result is proved for $L_0=\mco$.
\end{em}\end{remark}

\begin{proposition}\label{l5}
Let $C$ be a general curve of genus $g\ge2$ and $L_0$ a line bundle on $C$ of degree $d_0$ with $d_0+d\ge g$. Then $\mcwd(L_0)$ is $\theta_{1,d}$-stable.
\end{proposition}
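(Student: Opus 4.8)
The plan is to reduce the statement to Theorem \ref{c1}(a)(ii) together with the relevant case of Butler's Conjecture on a general curve. Since $n=1$, we take $E=L\in\pic^{d+1}(C)$, so that $\alpha_{L,L}=1_{\picd}$ and $\phi_L:C\to\picd$ is the classical Abel--Jacobi embedding. By \eqref{eq0} we have $\phi_E=\phi_L$, and by Theorem \ref{c1}(a) it suffices to produce \emph{one} line bundle $L\in\pic^{d+1}(C)$ such that $L_0\otimes L$ is stable (automatic, since both factors are line bundles), generated, has $h^1(L_0\otimes L)=0$, and such that $M_{L_0\otimes L}$ is stable. The degree of $L_0\otimes L$ is $d_0+d+1\ge g+1$; on a general curve a general line bundle of degree $\ge g+1$ is globally generated with vanishing $h^1$ (indeed $h^1=0$ already for a general line bundle of degree $\ge g$, and global generation holds for a general line bundle of degree $\ge g+1$), so both conditions hold for generic $L$.

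The crux is therefore the stability of $M_N$ for $N:=L_0\otimes L$ a general line bundle of degree $d_0+d+1\ge g+1$ on a general curve $C$. This is precisely the line-bundle case of Butler's Conjecture, and it is known: for a general line bundle of degree at least $g+1$ on a general curve the kernel bundle $M_N$ (the dual span, or \emph{syzygy}, bundle) is stable. I would cite the relevant result here — this is the generic vanishing/Butler-stability result for $M_N$ with $N$ a general line bundle, as established for general curves — rather than reprove it. Concretely, one chooses $L$ so that $N=L_0\otimes L$ achieves this generic behaviour; since $L_0$ is fixed, $N$ ranges over a translate of $\pic^{d+1}(C)$, hence over all of $\pic^{d_0+d+1}(C)$, so a general such $N$ is available, and the cited theorem gives stability of $M_N$.

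With $M_{L_0\otimes L}$ stable for a suitable $L$, Theorem \ref{c1}(a)(i) gives that $\phi_L^*(\mcwd(L_0))$ is stable; but when $n=1$ we have $\alpha_{L,L}=1_{\picd}$, so $\alpha_{L,L}^*(\mcwd(L_0))=\mcwd(L_0)$, and Theorem \ref{c1}(a)(ii) (equivalently, Lemma \ref{l2} applied with $\phi_L$) yields directly that $\mcwd(L_0)$ is $\theta_{1,d}$-stable. Note that Lemma \ref{l2} is stated for torsion-free sheaves, which is exactly what we need since $d_0+d\ge g$ does not force $\mcwd(L_0)$ to be locally free; its torsion-freeness comes from being a direct image of a bundle flat over $\picd$ minus its pullback-of-a-point twists, or can be quoted from the general theory of Picard sheaves.

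The main obstacle is genuinely the input from Butler's Conjecture: everything else is formal bookkeeping with the maps $\phi_L$, $\alpha_{L,E}$ and the identification $\alpha_{L,L}=1_{\picd}$, plus standard genericity statements about line bundles on a general curve. One should be slightly careful that ``general curve'' is used consistently — the hypothesis of Proposition \ref{l5} already assumes $C$ general, so we may freely invoke the general-curve version of the stability of $M_N$ — and that the genericity conditions on $L$ (global generation, $h^1=0$, and $M_N$ stable) are simultaneously satisfiable, which holds because each is an open dense condition on $\pic^{d+1}(C)$ and a finite intersection of such is again open dense.
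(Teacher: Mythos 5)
Your reduction is exactly the paper's: take $E=L\in\pic^{d+1}(C)$ so that $\alpha_{L,L}=1_{\picd}$, check that for general $L$ the line bundle $L_0\otimes L$ is generated with $h^1=0$, and feed the stability of $M_{L_0\otimes L}$ into Theorem \ref{c1}(a)(ii). The genuine problem is your blanket claim that $M_N$ is stable for a general line bundle $N$ of degree $\ge g+1$ on a general curve. The result the paper actually invokes (\cite[Theorem 2]{bu2}; see also \cite[Proposition 4.1]{brp}) is for $g\ge3$, and your claim is false for $g=2$ in precisely the degree corresponding to $d_0+d=2g-1=3$: if $\deg N=4=2g$ on a genus-$2$ curve, then $h^0(N)\cdot h^0(K_C)=6>5=h^0(N\otimes K_C)$, so the multiplication map $H^0(N)\otimes H^0(K_C)\to H^0(N\otimes K_C)$ has nontrivial kernel, which by the defining sequence \eqref{eq1} twisted by $K_C$ gives a nonzero map $K_C^{-1}\to M_N$. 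Since $M_N$ has rank $2$ and slope $-2=\deg K_C^{-1}$, it is at best strictly semistable, never stable. So your argument, as written, breaks down at $g=2$, $d_0+d=3$ (note that Butler's \cite[Theorem 1.2]{bu} also gives only semistability there, since $\deg N=2g$ is the borderline case).

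The repair is the case split the paper makes. For $g=2$, the range $d_0+d\ge 3=2g-1$ is already covered by Proposition \ref{l4} together with Remark \ref{r0} (Kempf's theorem \cite{k} for the borderline $d_0+d=2g-1$), which prove $\theta_{1,d}$-stability of $\mcwd(L_0)$ directly without passing through $M_N$; the only case Proposition \ref{l5} must add at $g=2$ is $d_0+d=2$, where $\mcwd(L_0)$ has rank $d_0+d+1-g=1$ and is trivially stable. With the Butler input restricted to $g\ge3$ and this $g=2$ discussion added, the rest of your argument — the simultaneous genericity of the open conditions on $L$, the torsion-freeness of $\mcwd(L_0)$, and the identification $\alpha_{L,L}=1_{\picd}$ — is correct and coincides with the paper's proof.
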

\begin{proof}
For general $L\in\pic^{d+1}(C)$, we have $h^1(L_0\otimes L)=0$ and $L_0\otimes L$ is generated. Moreover, for $g\ge3$, $M_{L_0\otimes L}$ is stable by \cite[Theorem 2]{bu2} (see also \cite[Proposition 4.1]{brp}) and the result follows from Theorem \ref{c1}(a)(ii). For $g=2$, by Remark \ref{r0}, the only outstanding case is $d_0+d=2$ and then $\mcwd(L_0)$ has rank $1$.
\end{proof}

Propositions \ref{l4} and \ref{l5} and Remark \ref{r0} complete the proof of Theorem \ref{th1}.

\begin{remark}\label{r1}\begin{em}
For other proofs of Theorem \ref{th1}(ii), see \cite[Lemma 2.1]{br} or \cite[Theorem A]{hp}; a result on semistability when $n_0(g-1)<d_0\le n_0g$ may be found in \cite[Theorem B]{hp}.
\end{em}\end{remark}

\section{Picard sheaves on $\mcmx$}\label{mcmx}

In this section, we are concerned with results for Picard sheaves on $\mcmx$ and, in particular, with 
establishing Theorem \ref{th2}. For Theorem \ref{th2}(i), we are not assuming that 
$\gcd(n,d)=1$, so we need to show that $\theta_{n,\xi}$-stability is well defined on $\mcmx$ and on 
the open subset $\mcmxxl\,\subset\,\mcmx$ (see \eqref{e1}).
Recall first that $\mcmx$ has a natural compactification $\overline{\mc M}_{n,\xi}$, which is locally factorial with $\pic(\overline{\mc M}_{n,\xi})\cong\mathbb{Z}$ \cite{dn}. Unless $g=n=2$ and $d$ is even, the complement of $\mcmx$ in $\overline{\mc M}_{n,\xi}$ coincides with the singular set of $\overline{\mc M}_{n,\xi}$ \cite[Theorem 1]{NR1} and therefore has codimension $\ge2$. Except in this case, we therefore have $\pic(\mcmx)
\,\cong\,
{\mathbb Z}$ and we can take $\theta_{n,\xi}$ to be the positive generator. 

In order to prove Theorem \ref{th2}(i), we need some lemmas.

\begin{lemma}\label{l12}
If $L_0$ is a line bundle of degree $d_0$ and $nd_0+d>n(g-1)$, then, unless $g=n=2$ and $d$ is even, the complement of $\mcmxxl$ in $\overline{\mc M}_{n,\xi}$ has codimension $\ge2$.
\end{lemma}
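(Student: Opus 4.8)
\textbf{Proof proposal for Lemma \ref{l12}.}

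The plan is to analyse the complement $\overline{\mc M}_{n,\xi}\setminus\mcmxxl$ as the union of two pieces: the boundary $\overline{\mc M}_{n,\xi}\setminus\mcmx$, and the ``jumping locus'' $\mcmx\setminus\mcmxxl=\{E\in\mcmx\mid h^1(L_0\otimes E)\neq 0\}$. For the first piece, the stated facts about $\overline{\mc M}_{n,\xi}$ (it is locally factorial with Picard group $\mathbb Z$, and the boundary has codimension $\ge 2$ away from the excluded case $g=n=2$, $d$ even) already give what we want, so the entire content of the lemma is to show that the jumping locus has codimension $\ge 2$ in $\overline{\mc M}_{n,\xi}$ — equivalently, since that locus lies inside $\mcmx$, codimension $\ge 2$ in $\mcmx$.

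First I would set up the numerics. For $E\in\mcmx$ we have $\chi(L_0\otimes E)=nd_0+d+n(1-g)$, which by the hypothesis $nd_0+d>n(g-1)$ is strictly positive; write $\chi=:\nu>0$. Thus $h^1(L_0\otimes E)\neq 0$ is equivalent to $h^0(L_0\otimes E)\ge \nu+1$. The jumping locus is therefore the locus where $h^0(L_0\otimes E)$ jumps above its generic value, and the goal is a lower bound on its codimension. The natural approach is a Brill–Noether / incidence-variety dimension count: parametrise pairs $(E,s)$ with $s$ a nonzero section of $L_0\otimes E$ (or, better, a pencil of such sections to capture $h^0\ge 2$ above the expected value), stratify by the rank of the evaluation or by the degree of the subsheaf generated by the sections, and bound the dimension of each stratum. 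An effective nonzero section of $L_0\otimes E$ gives an exact sequence $0\to \mc O_C\to L_0\otimes E\to Q\to 0$ with $Q$ of rank $n-1$ and degree $nd_0+d$; untwisting by $L_0^{-1}$ realises $E$ as an extension of a torsion-free sheaf of rank $n-1$ by $L_0^{-1}$. Counting moduli of such extensions (moduli of the quotient, plus $\dim\operatorname{Ext}^1$, minus automorphisms) against $\dim\mcmx=n^2(g-1)+1$ should show that imposing one extra section already cuts the dimension by a definite amount, and imposing the ``unexpected'' section (the one beyond $h^0=\nu$) costs at least $2$ in codimension once $\nu\ge 1$. I expect the clean way to phrase this is: the locus where $h^0(L_0\otimes E)\ge\nu+1$ has codimension $\ge 2$ in $\mcmx$ provided $\nu\ge 1$ and $g\ge 2$, with the bookkeeping surviving in the one borderline small case by direct inspection.

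The main obstacle will be making the dimension count genuinely uniform in $n$, $d$, $d_0$ and handling the low-genus/low-rank edge cases (the case $g=n=2$ is explicitly excluded, but $g=2$, $n\ge 3$ and $g\ge 3$, $n=2$ still need care), since Brill–Noether estimates for higher-rank bundles are not as clean as the classical line-bundle case and one must be careful that the subsheaf generated by the sections may fail to be a subbundle. A slicker alternative, which I would try first, is to avoid the raw count by a twisting trick: choose an effective divisor $D$ with $\deg D$ large and note that $E\mapsto E(D)$ identifies $\mcmx$ with $\mc M_{n,\xi(nD)}$ while shifting $L_0$ to $L_0(-D)$... (this changes nothing essential). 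The genuinely useful reduction is instead to the known case $L_0=\mc O_C$: twisting $E$ by a fixed line bundle of the right degree replaces $(L_0,\xi)$ by $(\mc O_C,\xi')$ only when $\deg L_0$ is divisible appropriately, so in general one compares the jumping locus for $L_0\otimes E$ with that for $E'$ of a shifted degree. Since the desired statement and its proof are insensitive to such a shift (the inequality $nd_0+d>n(g-1)$ is exactly the positivity of $\chi$), I would invoke the corresponding codimension estimate already established in the literature for the Poincaré/Picard bundle on $\mcmx$ — this is precisely the statement that $\mcmxxl$ is a ``large'' open set — citing \cite{bisbn2}, and then only verify by hand that the boundary contribution, governed by the cited result of \cite{dn}, does not spoil codimension $\ge 2$ except in the advertised exceptional case. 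This reduces the whole lemma to assembling two known codimension-$\ge 2$ statements, with the new work confined to checking their compatibility and the edge cases.
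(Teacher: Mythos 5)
Your proposal is correct and follows essentially the same route as the paper: split the complement into the boundary $\overline{\mc M}_{n,\xi}\setminus\mcmx$ (codimension $\ge 2$ by \cite{dn} except when $g=n=2$ and $d$ is even) and the jumping locus $\mcmx\setminus\mcmxxl$, and handle the latter by citing \cite{bisbn2}. The only detail you leave implicit is the one the paper's proof actually turns on: \cite[Lemma 4.1]{bisbn2} is \emph{stated} only for $g\ge 3$, and the point is that its proof gives codimension $\ge 1+nd_0+d-n(g-1)\ge 2$ for all $g\ge 2$ under the hypothesis $nd_0+d>n(g-1)$, so no separate Brill--Noether count or edge-case analysis is needed.
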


\begin{proof}
 Since we know that, under the hypotheses of the lemma, the complement of $\mcmx$ in $\overline{\mc M}_{n,\xi}$ has codimension $\ge2$, it remains to prove that the complement of $\mcmxxl$ in $\mcmx$ has codimension $\ge2$. For $g\ge3$, this is proved in \cite[Lemma 4.1]{bisbn2}. In fact, the proof of that lemma shows that the codimension $\ge 1+nd_0+d-n(g-1)$ whenever $g\ge2$, which gives the required result.
\end{proof}

Under the hypotheses of Lemma \ref{l12}, we now see that $\pic(\mcmxxl)\cong{\mathbb Z}$ and that the restriction of the positive generator $\theta_{n,\xi}$ of $\pic(\mcmx)$ to $\mcmxxl$ generates $\pic(\mcmxxl)$; we continue to denote this generator by $\theta_{n,\xi}$. We can therefore extend the concept of $\theta_{n,\xi}$-stability to torsion-free sheaves and projective bundles on $\mcmxxl$.

Recall that a vector bundle $F$ on $C$ is $(\ell,m)$-stable if 
\[\frac{\deg F'+\ell}{\rk F'}<\frac{\deg F+\ell-m}{\rk F}\]
for every proper subbundle $F'$ of $F$ (see \cite{NR0,NR}).

\begin{lemma}\label{l14}
Let $\xi$ be a line bundle of degree $d$ on $C$, $L_0$ a line bundle of degree $d_0$ and $p\in C$. Then there exist $(0,1)$-stable bundles of rank $n$ and determinant $L_0^n\otimes\xi(p)$ if and only if either $g\ge3$ or $g=2$ and $d$ is not a multiple of $n$.
\end{lemma}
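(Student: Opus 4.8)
\textbf{Proof proposal for Lemma \ref{l14}.}

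The plan is to prove the two implications separately, translating the existence question into one about $(0,1)$-stability on moduli of bundles of the prescribed rank and determinant, and then invoking known non-emptiness / existence results for such moduli spaces. First I would fix the determinant $\delta := L_0^n\otimes\xi(p)$, which has degree $nd_0+d+1$, and note that $(0,1)$-stability of a bundle $F$ of rank $n$ is equivalent to the condition that for every proper subbundle $F'\subset F$ one has $\mu(F') < \mu(F) - 1/n$, i.e. $F$ lies in the open locus of $(0,1)$-stable bundles inside the fixed-determinant moduli space. Since $(0,1)$-stability is an open condition (as recalled just before Theorem \ref{th2}), it suffices to exhibit a single such $F$; equivalently, one must show the $(0,1)$-stable locus is non-empty. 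For this I would use the standard fact (going back to Narasimhan--Ramanan \cite{NR0, NR}, and used repeatedly in the Hecke-correspondence arguments of \cite{bbgn, bisbn2}) that the locus of $(0,1)$-stable bundles with a fixed determinant of degree $e$ and rank $n$ is non-empty precisely when the moduli space $\mc M_{n,\delta}$ has dimension large enough to accommodate the codimension of the non-$(0,1)$-stable locus, the obstruction being exactly the small cases $g=2$ with $n\mid e$.

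For the ``if'' direction, when $g\ge 3$ the dimension of $\mc M_{n,\delta}$ is $n^2(g-1)+1$, which grows fast enough that the Hecke-type dimension count shows the complement of the $(0,1)$-stable locus has positive codimension; concretely one bounds the family of bundles admitting a destabilising subbundle $F'$ of each rank $r$ and slope $\mu(F') \ge \mu(F)-1/n$ and checks that its dimension is strictly less than $\dim \mc M_{n,\delta}$, which holds for all $g\ge 3$ and all $n\ge 1$. When $g=2$ and $n\nmid d$, note $\deg\delta = nd_0+d+1$ and $n\mid\deg\delta$ would force $n\mid d+1$; the hypothesis $n\nmid d$ does not by itself exclude $n\mid d+1$, so here I would instead argue directly that for $g=2$ the relevant moduli space is still large enough and the $(0,1)$-stable locus is non-empty whenever we are not in the genuinely exceptional configuration — this is where I must be careful to match the exceptional case to ``$g=2$ and $d$ a multiple of $n$'' rather than ``$\deg\delta$ a multiple of $n$''. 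I expect to reconcile this by observing that the twist by $p$ shifts the degree by exactly $1$, and that the obstruction to $(0,1)$-stability (as opposed to $(1,0)$- or $(0,0)$-stability) is governed by the residue of $\deg\delta - 1 = nd_0+d$ mod $n$, i.e. by $d \bmod n$; this is the crux of the bookkeeping and the main place a sign or off-by-one error could creep in.

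For the ``only if'' direction I would show that when $g=2$ and $n\mid d$ (so $n\mid nd_0+d$, equivalently $\deg\delta \equiv 1 \pmod n$ in a way that makes the relevant semistable-but-not-$(0,1)$-stable stratum dense), every bundle of rank $n$ and determinant $\delta$ fails to be $(0,1)$-stable. The mechanism: on a genus $2$ curve the moduli space $\mc M_{n,\delta}$ has dimension only $n^2+1$, and the locus of bundles possessing a subbundle $F'$ with $\mu(F') = \mu(F) - 1/n$ — which are exactly the bundles that are stable but not $(0,1)$-stable — turns out to be all of $\mc M_{n,\delta}$ when the numerics permit such an $F'$ to exist with integral degree, i.e. when $n\mid d$. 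I would make this precise by a dimension count showing that the ``boundary'' stratum already exhausts the moduli space, exactly as in the codimension computation quoted in Lemma \ref{l12} (where the codimension formula $\ge 1 + nd_0+d-n(g-1)$ degenerates for $g=2$).

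The main obstacle, as indicated, is the $g=2$ analysis on both sides: getting the dimension counts tight enough to conclude non-emptiness when $n\nmid d$ and, conversely, emptiness when $n\mid d$, while correctly tracking how the twist by the point $p$ interacts with the divisibility condition. The higher-genus case and the openness reduction are routine; the delicate point is the borderline genus-$2$ numerics, which I would handle by an explicit Hecke-correspondence dimension estimate of the type used in Section \ref{mcmx}, or alternatively by citing the existence results for $(0,1)$-stable bundles implicit in \cite{NR} and \cite{bisbn2} and checking that their hypotheses reduce exactly to ``$g\ge 3$, or $g=2$ and $n\nmid d$''.
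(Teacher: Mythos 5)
Your ``if'' direction is essentially the paper's: the paper simply quotes \cite[Lemma 2]{bbgn}, whose proof is the dimension count on the non-$(0,1)$-stable locus that you sketch, and notes that it yields existence except when $g=2$ and $ne=(n-1)(nd_0+d)$ has an integral solution, i.e.\ $n\mid d$. Your instinct that the relevant residue is that of $\deg(L_0^n\otimes\xi(p))-1=nd_0+d$, hence of $d$ modulo $n$, is the correct resolution of the off-by-one worry, although you leave it as an expectation rather than a verification.

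The genuine gap is in the ``only if'' direction. You propose to show that when $g=2$ and $n\mid d$ the stratum of non-$(0,1)$-stable bundles ``exhausts the moduli space'' by a dimension count, citing the kind of codimension estimate in Lemma \ref{l12}. That estimate controls the locus where $h^1\neq 0$ and has nothing to do with maximal subbundles; more fundamentally, a dimension count of an incidence variety can only bound the dimension of its image from above, so it cannot by itself show that \emph{every} bundle of rank $n$ and determinant $L_0^n\otimes\xi(p)$ admits a destabilising subbundle. What is needed is a universal lower bound on the degree of the maximal subbundle of an \emph{arbitrary} bundle, and this is exactly what the paper invokes: by Mukai--Sakai \cite{ms}, every $F$ of rank $n$ and degree $nd_0+d+1$ has a subbundle of rank $n-1$ and degree $d'$ with $(n-1)(nd_0+d+1)-nd'\le (n-1)g=2(n-1)$, i.e.\ $nd'\ge(n-1)(nd_0+d-1)$; when $n\mid d$ the integrality of $(n-1)(nd_0+d)/n$ upgrades this to $d'/(n-1)\ge(nd_0+d)/n$, contradicting $(0,1)$-stability. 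Your route could in principle be repaired --- the locus of bundles admitting a rank-$(n-1)$ subbundle of degree $\ge k$ is closed (properness of relative Quot schemes), so it would suffice to treat the generic bundle, which requires the known generic value of the maximal subbundle degree --- but as written the proposal names no tool that delivers the required statement about every (or even the generic) bundle, and this is the heart of the lemma.
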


\begin{proof}
When $\gcd(n,d)=1$, the existence of $(0,1)$-stable bundles in ${\mc M}_{L_0^n\otimes\xi(p)}$ follows from \cite[Lemma 2]{bbgn}. In fact, the proof of that lemma shows that $(0,1)$-stable bundles exist unless $g=2$ and there exists an integer $e$ such that $ne=(n-1)(nd_0+d)$, in other words, $d$ is a multiple of $n$. It remains to show that, if $g=2$ and $F\in{\mc M}_{L_0^n\otimes\xi(p)}$ with $d$ a multiple of $n$, then $F$ is not $(0,1)$-stable. In fact, by \cite{ms}, any vector bundle $F$ of rank $n$ and degree $nd_0+d+1$ admits a subbundle of rank $n-1$ and degree $d'$ with 
\[(n-1)(nd_0+d+1)-nd'\le(n-1)g=2(n-1).\]
This condition simplifies to $nd'\ge(n-1)(nd_0+d-1)$. Since $d$ is a multiple of $n$, this is equivalent to 
\[\frac{d'}{n-1}\ge\frac{nd_0+d}n,\] which contradicts the $(0,1)$-stability of $F$.
\end{proof}

\begin{proof}[Proof of Theorem \ref{th2}(i)]
Let $L_0$ be a line bundle of degree $d_0$ with $nd_0+d>n(g-1)$ and let $f:\mcmx\longrightarrow{\mc M}_{n,L_0^n\otimes\xi}$ be defined by $f(E)=L_0\otimes E$. Then
\[f^*(\mcpw_{n,L_0^n\otimes\xi}(\mco))\,\cong\, \mcwxp(L_0)\, .\]
If $g\ge3$, Theorem \ref{th2}(i) now follows directly from \cite[Theorem 4.4 and Corollary 4.5]{bisbn2}. When $g=2$, we use Lemma \ref{l12} in place of \cite[Lemma 4.1]{bisbn2} and Lemma \ref{l14} in place of \cite[Lemma 3.4]{bisbn2}. The proofs of \cite[Theorem 4.4 and Corollary 4.5]{bisbn2} now remain valid.\end{proof}

We turn to the case $n_0\ge2$ and assume that $\gcd(n,d)=1$. Now $\mcmx$ is a smooth projective variety with $\pic(\mcmx)\cong {\mathbb Z}$ and the Picard sheaf is defined on the whole of $\mcmx$. We shall  need a generalisation of the concept of $(\ell,m)$-stability to torsion-free sheaves on $\mcmx$. For any such sheaf ${\mc E}$, we can write $c_1(\mc E)=\lambda_{\mc E} c_1(\theta_{n,\xi})$ for some integer $\lambda_{\mc E}$. The sheaf $\mc E$ is now $\theta_{n,\xi}$-stable (semistable) if and only if, for every proper subsheaf $\mc F$ of $\mc E$,
\[\frac{\lambda_{\mc F}}{\rk {\mc F}}<(\le)\frac{\lambda_{\mc E}}{\rk {\mc E}}.\]
\begin{definition}\label{def}\begin{em}
Suppose that $g\ge2$ and $\gcd(n,d)=1$. A torsion-free sheaf $\mc E$ on $\mcmx$ is $(\ell,m)$-$\theta_{n,\xi}$-\textit{stable (semistable)} if, for every proper subsheaf $\mc F$ of $\mc E$,
\begin{equation}\label{eq11}
\frac{\lambda_{\mc F}+\ell}{\rk {\mc F}}<(\le)\frac{\lambda_{\mc E}+\ell-m}{\rk {\mc E}}.
\end{equation}
\end{em}\end{definition}

This definition makes sense on any quasi-projective variety whose Picard group is isomorphic to ${\mathbb Z}$.

We now recall more details from \cite{bbgn}. For any vector bundle $F$ of rank $n$ and determinant $\xi(p)$ with $p\in C$, the non-trivial exact sequences
\begin{equation}\label{eq8}
0\lra E\lra F\lra {\mathbb C}_p\lra0
\end{equation}
form a family parametrised by the projective space ${\mathbb P}(F_p^*)$. If $F$ is $(0,1)$-stable, then $E$ is stable, so we obtain a morphism
\[\psi_{F,p}:{\mathbb P}(F_p^*)\lra\mcmx.\]

\begin{lemma}\label{l18}
Suppose that $\gcd(n,d)=1$ and let $F$ be a $(0,1)$-stable bundle of rank $n$ and determinant $\xi(p)$ for some line bundle $\xi$ of degree $d$ and some $p\in C$. Then $\psi_{F,p}$ is an isomorphism onto its image and 
\begin{equation}\label{eq12}
\psi_{F,p}^*(\theta_{n,\xi})\cong{\mc O}_{{\mathbb P}(F_p^*)}(1).
\end{equation}
\end{lemma}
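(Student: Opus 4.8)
The plan is to exploit the fact that the exact sequences \eqref{eq8} realise $\mathbb{P}(F_p^*)$ as a Hecke modification space, so that $\psi_{F,p}$ is a bijection onto its image precisely because $(0,1)$-stability forces the quotient $F$ to be reconstructible from the subsheaf $E$. First I would check injectivity: if $\psi_{F,p}([\phi])=\psi_{F,p}([\phi'])=E$, then $F$ contains $E$ as a subsheaf of colength one at $p$ in two ways, i.e.\ we have two surjections $F\to\mathbb{C}_p$ with the same kernel $E$; since $\mathrm{Hom}(F,\mathbb{C}_p)=F_p^*$, these differ by a scalar, so $[\phi]=[\phi']$. For the inverse direction, given $E$ in the image one recovers $F$ as follows: $F$ sits inside $E(p)$ (dualise and twist the sequence), and it is determined by a one-dimensional quotient of $E(p)_p/E_p\cong E_p$ (up to scalar), hence by a point of $\mathbb{P}(E_p)$; one then verifies $F$ is $(0,1)$-stable and that this construction inverts $\psi_{F,p}$ locally, giving an isomorphism onto the image. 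It is cleanest to do this in families, noting that the universal extension over $\mathbb{P}(F_p^*)\times C$ is a family of stable bundles, hence induced from $\mcux$ by a morphism which is $\psi_{F,p}\times 1_C$, and that the reverse Hecke construction provides a morphism in the opposite direction; composing gives the identity on both sides by the universal property.

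For the computation \eqref{eq12} of $\psi_{F,p}^*(\theta_{n,\xi})$, I would use the standard description of the theta line bundle via a determinant-of-cohomology construction. Fix a vector bundle $G$ on $C$ of the appropriate rank and degree so that $\chi(E\otimes G)=0$ for $E\in\mcmx$; then $\theta_{n,\xi}$ is (a positive multiple of) the determinant of cohomology line bundle $\det R p_{1*}(\mathcal{U}_\xi\otimes p_2^*G)^{-1}$. Pulling back along $\psi_{F,p}\times 1_C$ and using the universal extension
\[
0\lra \mathcal{E}\lra p_2^*F\otimes \mathcal{O}_{\mathbb{P}(F_p^*)}(1)\lra (\mathcal{O}_{\mathbb{P}(F_p^*)}\boxtimes \mathbb{C}_p)(1)\lra 0
\]
on $\mathbb{P}(F_p^*)\times C$ (or the appropriate twist of it), I would tensor by $p_2^*G$, take the long exact sequence of higher direct images under $p_1$, and apply the multiplicativity of $\det R p_{1*}$ in short exact sequences. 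The middle term contributes $\det Rp_{1*}(p_2^*(F\otimes G))\otimes \mathcal{O}(1)^{\otimes \chi(F\otimes G)}$, which is (a power of $\mathcal{O}(1)$ times) a trivial bundle since $F$ and $G$ are fixed; the skyscraper term $(\mathcal{O}\boxtimes\mathbb{C}_p)(1)\otimes p_2^*G$ contributes $\mathcal{O}(1)^{\otimes \mathrm{rk}\,G}$. Bookkeeping the exponents using $\chi(E\otimes G)=0$ and the choice of normalisation of $\theta_{n,\xi}$ as the positive generator of $\pic(\mcmx)\cong\mathbb{Z}$ then pins the answer down to $\mathcal{O}_{\mathbb{P}(F_p^*)}(1)$.

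The main obstacle I anticipate is the last step: correctly identifying the normalisation constant relating the determinant-of-cohomology bundle to the positive generator $\theta_{n,\xi}$, and checking that the combined exponent of $\mathcal{O}(1)$ coming from the three terms in the determinant formula is exactly $1$ rather than some other integer. This is a delicate point because it depends on the precise choice of the auxiliary bundle $G$ and on the convention for $\theta_{n,\xi}$; a cleaner route, which I would pursue if the bookkeeping becomes unwieldy, is to argue that $\psi_{F,p}^*(\theta_{n,\xi})=\mathcal{O}(k)$ for some $k\ge 1$ (positivity of $k$ being automatic since $\theta_{n,\xi}$ is ample and $\psi_{F,p}$ is a closed embedding into a space of dimension $n-1\ge 1$), and then fix $k=1$ by a direct computation on a single rational curve in $\mathbb{P}(F_p^*)$, or by comparison with the known case in \cite{bbgn}. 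Injectivity of $\psi_{F,p}$ and its being an immersion (so that "isomorphism onto its image" holds) are comparatively routine once the Hecke-theoretic picture is set up, using $(0,1)$-stability to guarantee that all the relevant extensions and sub-line-bundles behave as expected.
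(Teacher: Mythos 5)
Your handling of the first assertion (that $\psi_{F,p}$ is an isomorphism onto its image) is fine; it is essentially the content of \cite[Lemma 5.9]{NR} and \cite[Lemma 3]{bbgn}, which is all the paper itself invokes. The problem is \eqref{eq12}, and it sits precisely at the step you flag as ``delicate bookkeeping'' and then defer. If you actually complete your determinant-of-cohomology computation you do not get $\mathcal{O}(1)$. Since $\gcd(n,d)=1$, the condition $\chi(E\otimes G)=0$ forces $n\mid\operatorname{rk}G$, so the auxiliary bundle realising the positive generator $\theta_{n,\xi}=\det Rp_{1*}(\mcux\otimes p_2^*G)^{-1}$ has $\operatorname{rk}G=n$ (Drezet--Narasimhan). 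Feeding the universal Hecke sequence $0\to\mathcal{E}\to p_2^*F\to\mathcal{O}_{{\mathbb P}(F_p^*)}(1)\boxtimes{\mathbb C}_p\to0$ into your own formula, the middle term contributes a trivial bundle, the twist relating $\mathcal{E}$ to $(\psi_{F,p}\times 1_C)^*\mcux$ contributes nothing because $\chi(E\otimes G)=0$, and the skyscraper term contributes $\mathcal{O}(1)^{\otimes\operatorname{rk}G}=\mathcal{O}(n)$; hence $\psi_{F,p}^*(\theta_{n,\xi})\cong\mathcal{O}_{{\mathbb P}(F_p^*)}(n)$. Two independent checks corroborate this: a Grothendieck--Riemann--Roch computation of $c_1(R^1p_{1*}\operatorname{ad}\mathcal{E})$ gives anticanonical degree $2n$ for a line in ${\mathbb P}(F_p^*)$, while $K_{\mcmx}^{-1}\cong\theta_{n,\xi}^2$ in the coprime case; and for $g=n=2$, $d=1$, where $\mcmx$ is the intersection of two quadrics in ${\mathbb P}^5$, the curves $\psi_{F,p}({\mathbb P}(F_p^*))$ form a two-dimensional family through a general point and so cannot be the (finitely many) lines through that point -- they are conics. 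So the identity \eqref{eq12} as stated appears to be false, with $\mathcal{O}(n)$ the correct right-hand side; your plan, carried out honestly, would discover this rather than prove the lemma.

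Your fallback is essentially the paper's own proof and shares its gap: knowing that $\psi_{F,p}^*(\mcwx(\mathcal{O}_C))\otimes\mathcal{O}(-j)$ has degree $-1$ does not pin $\deg\psi_{F,p}^*\theta_{n,\xi}$ down to $1$, because $\mathcal{O}(j)$ is a twist living on ${\mathbb P}(F_p^*)$ and is not the restriction of a line bundle from $\mcmx$; the relation one actually obtains is $mk-j\chi=-1$ with $k=\deg\psi_{F,p}^*\theta_{n,\xi}$ and $\gcd(n,\chi)=1$, which is perfectly solvable with $k=n$. The saving grace, which you partly anticipate, is that everything downstream (the proof of Theorem \ref{th3}(ii) here, and the slope comparisons in \cite{bbgn}) uses only that $\psi_{F,p}^*\theta_{n,\xi}=\mathcal{O}(k)$ with $k>0$, which does follow from ampleness; with $k=n$ the resulting inequalities are in fact stronger. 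So the correct fix is to prove and use only $\psi_{F,p}^*(\theta_{n,\xi})\cong\mathcal{O}(k)$ with $k>0$ (or to establish $k=n$ outright), not to chase the normalisation down to $\mathcal{O}(1)$.
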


\begin{proof}
For the first statement, see \cite[Lemma 5.9]{NR} (or \cite[Lemma 3]{bbgn}). After tensoring by a line bundle on $C$, we can suppose that $d>2n(g-1)$. It follows from \cite[Diagram (6)]{bbgn} that, for the integer $j$ defined in \cite[Formula (3)]{bbgn}, there is an exact sequence
\[0\lra\psi^*_{F,p}(\mcwx({\mc O}_C)(-j))\lra H^0(F)\otimes{\mc O}_{{\mathbb P}(F_p^*)}\lra{\mc O}_{{\mathbb P}(F_p^*)}(1)\lra0.\]
Hence $\psi^*_{F,p}(\mcwx({\mc O}_C)(-j))$ has degree $-1$. Since $\theta_{n,\xi}$ is the positive generator of $\pic(\mcmx)$, the formula \eqref{eq12} follows.
\end{proof}

In view of Lemma \ref{l18}, we can identify ${\mathbb P}(F_p^*)$ with its image in $\mcmx$.

\begin{lemma}\label{l19}
Suppose that $\gcd(n,d)=1$, $E_0\in{\mc M}_{n_0,d_0}$ and $F\in{\mc M}_{n,\xi(p)}$. Suppose further that one of the following holds:
\begin{itemize}
\item[(i)] $nd_0+n_0d>n_0n(2g-2)$ and $F$ is $(0,1)$-stable;
\item[(ii)] $nd_0+n_0d>n_0ng-n_0$ and $E_0$ and $F$ are general.
\end{itemize} 
Then there exists an exact sequence
\begin{eqnarray}\label{eq9}
\nonumber0\lra H^0(E_0\otimes F(-p))\otimes{\mc O}_{{\mathbb P}(F_p^*)}&\lra&\psi_{F,p}^*(\mcwx(E_0))(-j)\lra\Omega_{{\mathbb P}(F_p^*)}(1)\otimes(E_0)_p\\&\lra& H^1(E_0\otimes F(-p))\otimes{\mc O}_{{\mathbb P}(F_p^*)}\lra0.
\end{eqnarray}
\end{lemma}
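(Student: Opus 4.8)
\textbf{Proof proposal for Lemma \ref{l19}.}

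The plan is to mimic the construction in Proposition \ref{th1}, but now pulling back along $\psi_{F,p}$ rather than along $\phi_E$. First I would identify the family of stable bundles on $C$ parametrised by ${\mathbb P}(F_p^*)$ that underlies the morphism $\psi_{F,p}$: by \eqref{eq8}, this is the universal extension, i.e. the kernel of the evaluation $p_{{\mathbb P}}^*F\twoheadrightarrow$ (skyscraper along the section $\{p\}\times{\mathbb P}(F_p^*)$) twisted appropriately, and concretely on ${\mathbb P}(F_p^*)\times C$ one gets an exact sequence
\begin{equation}\label{eq20}
0\lra {\mc E}\lra p_2^*F\otimes q^*{\mc O}_{{\mathbb P}(F_p^*)}(-1)\otimes{\mc O}(\{p\}\times{\mathbb P}(F_p^*))\lra {\mc Q}\lra 0,
\end{equation}
where I am being deliberately schematic about the twists; the point is that ${\mc E}$ is a family of the bundles $E$ occurring in \eqref{eq8}, so it agrees with $(\psi_{F,p}\times 1_C)^*\mcux$ up to tensoring by the pullback of a line bundle on ${\mathbb P}(F_p^*)$ (exactly as in the passage from \eqref{eq2} to \eqref{eq5}, using that $\mcux$ is universal and $\pic({\mathbb P}(F_p^*))\cong{\mathbb Z}$ is torsion-free; the twist on $C$ is pinned down by Lemma \ref{l18}, which is why the shift by $j$ appears). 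Then I would tensor \eqref{eq20} by $p_2^*E_0$ and push forward along $p_1=q:{\mathbb P}(F_p^*)\times C\to{\mathbb P}(F_p^*)$.

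The second step is the cohomology and base-change analysis of the resulting long exact sequence. Pushing forward the middle term of the twisted \eqref{eq20} gives, by the projection formula, $H^0(E_0\otimes F)\otimes{\mc O}_{{\mathbb P}(F_p^*)}(-1)$-type terms together with the contribution of the divisor $\{p\}\times{\mathbb P}(F_p^*)$, whose normal bundle is ${\mc O}_{{\mathbb P}(F_p^*)}(1)$ (the tautological quotient) tensored with $(E_0)_p$; this is precisely where the factors $\Omega_{{\mathbb P}(F_p^*)}(1)\otimes(E_0)_p$ and the $H^0$, $H^1$ of $E_0\otimes F(-p)$ come from, after using the Euler sequence on ${\mathbb P}(F_p^*)$ to rewrite the relevant term. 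To make base change legitimate I need $h^1$ of the relevant restrictions to be constant along ${\mathbb P}(F_p^*)$; under hypothesis (i) the inequality $nd_0+n_0d>n_0n(2g-2)$ forces $h^1(E_0\otimes E)=0$ for every $E$ in the family (since $\mu(E_0\otimes E)>2g-2$ and $E_0\otimes E$ is semistable, indeed stable as $\gcd$ considerations or $(0,1)$-stability of $F$ ensure), so every push-forward $R^1$ of the relevant sheaves vanishes and the sequence is the clean four-term one displayed. Under hypothesis (ii) the bound only gives $\mu(E_0\otimes E)>g-1$, so I would instead invoke generality of $E_0$ and $F$ (hence of the $E$'s in the family) together with a standard dimension count to get $h^1(E_0\otimes E)$ as small as possible, again constant; the terms $H^0(E_0\otimes F(-p))$ and $H^1(E_0\otimes F(-p))$ in \eqref{eq9} are exactly the ``boundary'' contributions that survive, so the displayed sequence is stated so as to be valid in both cases without claiming the last term vanishes.

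The main obstacle I anticipate is bookkeeping the twists: getting the shift $(-j)$ and the appearance of ${\mc O}_{{\mathbb P}(F_p^*)}(1)$ (versus $(-1)$) correct requires carefully tracking the line bundle by which $(\psi_{F,p}\times 1_C)^*\mcux$ differs from ${\mc E}$ in \eqref{eq20}, and this is pinned down only through Lemma \ref{l18} (equation \eqref{eq12}) together with the normalisation of $\mcwx({\mc O}_C)$ used in \cite[Diagram (6)]{bbgn}. The cohomological part is then routine: identify $q_*$ of each term of the twisted \eqref{eq20} $\otimes\, p_2^*E_0$, note that the only higher direct image that can be nonzero is $R^1q_*$ of the sub ${\mc E}\otimes p_2^*E_0$, which under (i) vanishes and under (ii) contributes the $H^1(E_0\otimes F(-p))$ term, and assemble the four-term sequence. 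A secondary subtlety is checking that $E_0\otimes E$ is genuinely stable (not merely semistable) for the $E$'s in the family, so that $\psi_{F,p}$ lands in the locus where $\mcux$ restricts as claimed; under (i) this uses $(0,1)$-stability of $F$ (which makes $E$ stable) plus Remark \ref{r4}-type reasoning, and under (ii) the generality hypothesis together with Lemma \ref{l10}.
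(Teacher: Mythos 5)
Your overall strategy --- pull back the universal Hecke-modification diagram on ${\mathbb P}(F_p^*)\times C$, tensor with $p_2^*E_0$ and push forward along $p_1$ --- is the same as the paper's, which simply imports Diagrams (4)--(6) of \cite{bbgn} for this step. But the sequence you propose to push forward is the wrong one of the two natural sequences on ${\mathbb P}(F_p^*)\times C$: you exhibit ${\mathcal E}$ as a \emph{sub}sheaf of a twist of $p_2^*F$, whose direct image would produce $H^0(E_0\otimes F)$, ${\mathcal O}_{{\mathbb P}(F_p^*)}(1)\otimes(E_0)_p$ and $H^1(E_0\otimes F)$. The sequence that yields \eqref{eq9} is the other sub-quotient structure,
\[
0\lra p_2^*(F(-p))\lra {\mathcal E}(-j)\lra p_1^*\bigl(\Omega_{{\mathbb P}(F_p^*)}(1)\bigr)\otimes{\mathcal O}_{\{p\}\times{\mathbb P}(F_p^*)}\lra 0,
\]
where $\Omega_{{\mathbb P}(F_p^*)}(1)=\ker\bigl(F_p\otimes{\mathcal O}\to{\mathcal O}(1)\bigr)$ by the Euler sequence. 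Relatedly, your claim that $R^1p_{1*}({\mathcal E}\otimes p_2^*E_0)$ ``contributes the $H^1(E_0\otimes F(-p))$ term'' is backwards: that $R^1$ has fibres $H^1(E_0\otimes E)$ and must \emph{vanish} for \eqref{eq9} to terminate as displayed; the term $H^1(E_0\otimes F(-p))$ is $R^1p_{1*}$ of the subsheaf $p_2^*(E_0\otimes F(-p))$.

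The genuine gap is in case (ii). What the lemma requires is $H^1(E_0\otimes E)=0$ for \emph{every} $E\in{\mathbb P}(F_p^*)$: this is what legitimises base change (identifying the middle direct image with $\psi_{F,p}^*(\mcwx(E_0))(-j)$) and kills $R^1p_{1*}({\mathcal E}\otimes p_2^*E_0)$. Under (ii) the slope of $E_0\otimes E$ only exceeds $g-\frac1n$, so semistability gives nothing; and generality of $F$ does not make every member of the $(n-1)$-dimensional family ${\mathbb P}(F_p^*)$ a general point of $\mcmx$, so a ``standard dimension count'' yielding $h^1$ ``as small as possible'' could at best give vanishing on a dense open subset, leaving $R^1p_{1*}$ a possibly nonzero torsion sheaf and destroying both base change and exactness at the last term of \eqref{eq9}. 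The two ingredients you are missing are: (a) $H^1(E_0\otimes F(-p))=0$, which is not a dimension count but an application of Hirschowitz's theorem \cite[Theorem 4.6]{h} on non-speciality of the tensor product of general stable bundles, applicable because $nd_0+n_0(d+1-n)>n_0n(g-1)$; and (b) the observation that $F(-p)\subset E$ with torsion quotient for every $E$ in the family, so that $H^1(E_0\otimes E)$ is a quotient of $H^1(E_0\otimes F(-p))$ and hence vanishes at every point of ${\mathbb P}(F_p^*)$. (Your closing worry about stability of $E_0\otimes E$ is immaterial here: only stability of $E$, which follows from the $(0,1)$-stability of $F$, is needed to pull back $\mcux$.)
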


\begin{proof}
The bundle $E_0\otimes E$ is semistable for every $E\in\mcmx$. Hence, if (i) holds, $H^1(E_0\otimes E)=0$ for all such $E$. Tensoring by $E_0$ in \cite[Diagram (4)]{bbgn} and by $p_2^*(E_0)$ in \cite[Diagram (5)]{bbgn} (note that our $p_1$, $p_2$ correspond respectively to $p_2$, $p_1$ in \cite{bbgn}), we obtain from \cite[Diagram (6)]{bbgn} the required exact sequence \eqref{eq9}.

Now suppose that (ii) holds. The bundle $E_0\otimes F(-p)$ is semistable. If $L$ is a general element of $\pico$, then $F(-p)\otimes L$ is a general element of ${\mc M}_{n,d+1-n}$; moreover, for any $L$, $E_0\otimes L^{-1}$ is a general element of ${\mc M}_{n_0,d_0}$. It follows from \cite[Theorem 4.6]{h} that 
\[E_0\otimes L^{-1}\otimes F(-p)\otimes L=E_0\otimes F(-p)\]
is non-special. Since $nd_0+n_0(d+1-n)>n_0n(g-1)$, this implies that $H^1(E_0\otimes F(-p))=0$.
It follows that $H^1(E_0\otimes E)=0$ for all $E\in {\mathbb P}(F_p^*)$. The argument is completed as in case (i).
\end{proof}

\begin{remark}\label{r8}\begin{em}If the hypotheses of Lemma \ref{l19} hold, then \eqref{eq9} implies that $\psi_{F,p}^*(\mcwx(E_0))$ is locally free. Moreover, since $\Omega_{{\mathbb P}(F_p^*)}(1)$ has degree $-1$ and $E_0$ has rank $n_0$, the bundle $\Omega_{{\mathbb P}(F_p^*)}(1)\otimes(E_0)_p$ has degree $-n_0$. It follows at once from \eqref{eq9} that $\psi_{F,p}^*(\mcwx(E_0))(-j)$ also has degree $-n_0$; so $\psi_{F,p}^*(\mcwx(E_0))$ is not semistable.
\end{em}\end{remark}

\begin{lemma}\label{l15}
Suppose that the hypotheses of Lemma \ref{l19} hold. Then, for any proper subsheaf $\mc G$ of rank $r$ of $\psi_{F,p}^*(\mcwx(E_0))(-j)$ whose image in $\Omega_{{\mathbb P}(F_p^*)}(1)\otimes(E_0)_p$ is non-zero,
\begin{equation}\label{eq10}
\frac{\deg {\mc G}}r<\frac{\deg\psi_{F,p}^*(\mcwx(E_0))(-j)+n_0-1}{\rk\mcwx(E_0)}.
\end{equation}.
\end{lemma}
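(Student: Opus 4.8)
The plan is to read the estimate \eqref{eq10} straight off the exact sequence \eqref{eq9}. Write $W:=\psi_{F,p}^*(\mcwx(E_0))(-j)$, $N:=\rk W=\rk\mcwx(E_0)$, let $\mc K:=H^0(E_0\otimes F(-p))\otimes{\mc O}$ be the trivial subbundle on the left of \eqref{eq9}, and let $Q\subseteq A:=\Omega_{{\mathbb P}(F_p^*)}(1)\otimes(E_0)_p$ be the image of $W$ in $A$, so that \eqref{eq9} decomposes into $0\to\mc K\to W\to Q\to0$ and $0\to Q\to A\to H^1(E_0\otimes F(-p))\otimes{\mc O}\to0$. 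By Remark \ref{r8} we have $\deg W=-n_0$, so the right-hand side of \eqref{eq10} is exactly $-1/N$ and the claim is that $\deg{\mc G}/r<-1/N$. I also record: $\deg\mc K=0$ and $\deg A=-n_0$, hence $\deg Q=-n_0$; and $Q\ne0$ (otherwise $W\cong\mc K$ would be trivial, contradicting $\deg W=-n_0$), so $\rk\mc K=N-\rk Q<N$.

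Given $\mc G\subseteq W$ of rank $r$ whose image $\bar{\mc G}\subseteq A$ is non-zero, I set $\mc G_0:=\mc G\cap\mc K=\ker(\mc G\to A)$, giving $0\to\mc G_0\to\mc G\to\bar{\mc G}\to0$ and $\deg\mc G=\deg\mc G_0+\deg\bar{\mc G}$. The two pieces are estimated separately. Since $\mc G_0$ is a subsheaf of the trivial bundle $\mc K$, it has slope $\le0$, so $\deg\mc G_0\le0$. Since $\Omega_{{\mathbb P}(F_p^*)}(1)$ is semistable (indeed stable; this is classical) of slope $-1/(n-1)<0$, the bundle $A\cong\Omega_{{\mathbb P}(F_p^*)}(1)^{\oplus n_0}$ is semistable of the same negative slope, so every non-zero subsheaf of $A$ has negative degree; applied to $\bar{\mc G}$ and using integrality this gives $\deg\bar{\mc G}\le-1$. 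Hence $\deg\mc G\le-1$.

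To finish I distinguish two cases by the rank $r$. If $r<N$, then $\deg{\mc G}/r\le-1/r<-1/N$, as wanted. If $r=N$ — which can happen, a proper subsheaf need not drop rank — a rank count does it: from $\mc G_0\subseteq\mc K$ we get $\rk\mc G_0\le\rk\mc K=N-\rk Q$, so $\rk\bar{\mc G}=r-\rk\mc G_0\ge\rk Q$, while $\bar{\mc G}\subseteq Q$ forces $\rk\bar{\mc G}\le\rk Q$; thus $\rk\bar{\mc G}=\rk Q$, $Q/\bar{\mc G}$ is torsion, and $\deg\bar{\mc G}\le\deg Q=-n_0$. Then $\deg\mc G\le\deg\mc G_0+\deg\bar{\mc G}\le-n_0$, and as $n_0\ge2$ we get $\deg{\mc G}/N\le-n_0/N\le-2/N<-1/N$.

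The argument is just bookkeeping on \eqref{eq9}, and I anticipate no real obstacle; the only external input is the (semi)stability of $\Omega_{{\mathbb P}(F_p^*)}(1)$, which is well known, and semistability alone suffices. The one point needing care is the full-rank case $r=N$, where the generic bound $\deg\mc G\le-1$ is too weak: one must exploit that $\mc G_0$ lands in the trivial subbundle $\mc K$ to force $\bar{\mc G}$ to fill out $Q$ and so inherit the more negative degree $-n_0=\deg Q$. Throughout, degrees on ${\mathbb P}(F_p^*)$ are taken with respect to ${\mc O}_{{\mathbb P}(F_p^*)}(1)=\psi_{F,p}^*(\theta_{n,\xi})$ (Lemma \ref{l18}), which is exactly the normalisation making \eqref{eq10} the inequality required for the $\theta_{n,\xi}$-stability statements about $\mcwx(E_0)$ on $\mcmx$.
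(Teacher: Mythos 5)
Your argument is correct and takes essentially the same route as the paper: split $\mc G$ via \eqref{eq9} into its intersection with the trivial subbundle $H^0(E_0\otimes F(-p))\otimes\mc O$ (degree $\le 0$) and its non-zero image in the semistable negative-degree bundle $\Omega_{{\mathbb P}(F_p^*)}(1)\otimes(E_0)_p$ (degree $\le -1$), so that $\deg\mc G\le -1$ and \eqref{eq10} follows on dividing by the rank. Your separate treatment of the full-rank case is a small but genuine improvement: the paper's step $\tfrac{-1}{r}<\tfrac{-1}{\rk\mcwx(E_0)}$ tacitly assumes $r<\rk\mcwx(E_0)$, and your rank count (forcing the image of $\mc G$ to have full rank in $Q$, hence degree $\le\deg Q=-n_0$, and using $n_0\ge 2$ from the context of this section) closes that remaining case.
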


\begin{proof} 
Since $\Omega_{{\mathbb P}(F_p^*)}(1)\otimes(E_0)_p$ is semistable of negative degree, it follows from the hypothesis and \eqref{eq9} that $\deg{\mc G}\le{-1}$. So
\[\frac{\deg {\mc G}}r\le\frac{-1}r<\frac{-1}{\rk\mcwx(E_0)}=\frac{\deg\psi_{F,p}^*(\mcwx(E_0))(-j)+n_0-1}{\rk\mcwx(E_0)},\]
since $\deg\psi_{F,p}^*(\mcwx(E_0))(-j)= -n_0$ (see Remark \ref{r8}).
\end{proof}

\begin{lemma}\label{l16}
Let $p_1,\ldots,p_m\in C$ and let $\mc F$ be a subsheaf of $\mcwx(E_0)$. There exists a non-empty open subset $U$ of $\mcmx$ such that, if $E\in U$, then
\begin{itemize}
\item[(i)] $\mc F$ is locally free at $E$;
\item[(ii)] the homomorphism of fibres ${\mc F}_E\longrightarrow \mcwx(E_0)_E$ is injective;
\item[(iii)] for all $p_i$ and for the generic extension \eqref{eq8} with $p=p_i$, the vector bundle $F$ is $(0,1)$-stable and $\mc F$ is locally free at every point of $\psi_{F,p_i}({\mathbb P}(F_{p_i}^*))$ outside some subvariety of codimension at least $2$.
\end{itemize}
\end{lemma}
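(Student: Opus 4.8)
The plan is to establish the three assertions of Lemma \ref{l16} successively, each time restricting to a smaller non-empty open set. For (i) and (ii), I would argue as follows. The sheaf $\mc F$ is a subsheaf of the torsion-free sheaf $\mcwx(E_0)$ on the smooth variety $\mcmx$; its locus of non-local-freeness, together with that of $\mcwx(E_0)$ and of the quotient $\mcwx(E_0)/\mc F$, is a proper closed subset, so its complement $U_1$ is non-empty and open. Over $U_1$ all three sheaves are locally free and the sequence $0\to\mc F\to\mcwx(E_0)\to\mcwx(E_0)/\mc F\to0$ is an exact sequence of bundles, which gives (i) and the fibrewise injectivity in (ii) at every point of $U_1$.

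For (iii), I would proceed pointwise over $C$ and then intersect. Fix one of the points $p=p_i$. By Lemma \ref{l14} (under the standing hypothesis that either $g\ge3$ or $g=2$ and $d$ is not a multiple of $n$, which is available here since $\gcd(n,d)=1$), $(0,1)$-stable bundles $F\in{\mc M}_{n,\xi(p)}$ exist, and $(0,1)$-stability is an open condition, so the generic extension \eqref{eq8} with this $p$ has $F$ that is $(0,1)$-stable. What remains is to show that, for generic such $F$, the sheaf $\mc F$ fails to be locally free only on a subvariety of $\psi_{F,p}({\mathbb P}(F_p^*))$ of codimension $\ge2$. The non-local-freeness locus $Z\subset\mcmx$ of $\mc F$ is closed of codimension $\ge2$ (again because $\mcmx$ is smooth and $\mc F$ torsion-free, being a subsheaf of the locally free $\mcwx(E_0)$ restricted to the open set where the latter is locally free — off that open set $\mcwx(E_0)$ itself is not locally free and one can absorb it). The key point is that, as $F$ varies over the $(0,1)$-stable bundles of determinant $\xi(p)$, the images $\psi_{F,p}({\mathbb P}(F_p^*))$ sweep out all of $\mcmx$ (indeed they pass through a dense set of points, since every stable $E$ of determinant $\xi$ sits in such an extension for suitable $F$), so a general one of them meets $Z$ in the expected codimension $\ge2$ inside the $(n-1)$-dimensional projective space. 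More precisely, I would set up a parameter space $\mc B$ for pairs $(F,\text{extension class})$ — equivalently, use the Hecke correspondence of \cite{bbgn} — with its two maps to $\mcmx$ (via $E$) and to $\mcmx$ (via $F$, or rather to ${\mc M}_{n,\xi(p)}$); pulling back $Z$ and using that the Hecke correspondence is a projective bundle over an open subset of $\mcmx$ dominating the second factor, a dimension count shows that the generic fibre over ${\mc M}_{n,\xi(p)}$ meets the preimage of $Z$ in codimension $\ge2$. Finally I intersect the finitely many non-empty open subsets of ${\mc M}_{n,\xi(p_i)}$ obtained this way (one for each $i$), translate back to a condition on the generic extension, and combine with $U_1$ to obtain the required $U$.

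The main obstacle is the last part of (iii): making precise, via the Hecke correspondence, that a \emph{general} fibre ${\mathbb P}(F_p^*)$ meets the bad locus $Z$ in codimension $\ge2$ rather than merely in codimension $\ge1$. This needs the dominance of the Hecke correspondence onto the second factor ${\mc M}_{n,\xi(p)}$ together with flatness (or generic flatness) of the projection, so that the generic fibre has the expected dimension and the preimage of $Z$, being of codimension $\ge2$ in the total space, restricts to something of codimension $\ge2$ in the generic fibre. One must also keep track of the compactification $\overline{\mc M}_{n,\xi}$ and Lemma \ref{l12}-type codimension statements to ensure nothing escapes to the boundary, but this is routine given the results already quoted. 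The remaining ingredients — openness of $(0,1)$-stability, openness of local freeness, and the elementary fact that finite intersections of non-empty opens in an irreducible variety are non-empty — are standard.
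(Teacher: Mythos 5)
Your argument is correct and is essentially a reconstruction of the proof the paper relies on: the paper simply states that the proof is identical with that of \cite[Lemma 4]{bbgn}, and that proof proceeds exactly as you do --- local freeness of $\mc F$, $\mcwx(E_0)$ and the quotient on a dense open set for (i) and (ii), and for (iii) the codimension~$\ge 2$ singular locus of the torsion-free sheaf $\mc F$ combined with a dimension count on the Hecke correspondence, using dominance of both projections to pass between genericity of the extension over a fixed $E$ and genericity of $F$ in ${\mc M}_{n,\xi(p)}$. The only point to make fully explicit is the quantifier translation you mention at the end (generic point of the Hecke correspondence versus generic extension for each fixed $E$ in an open set), which is handled by the standard constructibility/semicontinuity arguments you indicate.
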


\begin{proof} The proof is identical with that of \cite[Lemma 4]{bbgn}.
\end{proof}

\begin{lemma}\label{l17}
Suppose that $\gcd(n,d)=1$, $E_0\in{\mc M}_{n_0,d_0}$ and that either $nd_0+n_0d>n_0n(2g-2)$ or $E_0$ 
is general and $nd_0+n_0d>n_0ng-n_0$. Let $\mc F$ be a proper subsheaf of $\mcwx(E_0)$. Then  there exist $p\in C$ and $F\in {\mc M}_{n,\xi}$ such
that the image of $\psi_{F,p}^*{\mc F}(-j)$ in $\Omega_{{\mathbb P}(F_{p}^*)}(1)\otimes(E_0)_p$ is non-zero.
\end{lemma}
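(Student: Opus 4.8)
The plan is to argue by contradiction: suppose that for general $E\in\mcmx$ and general $F$ as in \eqref{eq8}, the image of $\psi_{F,p}^*{\mc F}(-j)$ in $\Omega_{{\mathbb P}(F_p^*)}(1)\otimes(E_0)_p$ is zero for every $p\in C$. By the left-exactness of \eqref{eq9} in Lemma \ref{l19}, this means that $\psi_{F,p}^*{\mc F}(-j)$ is contained in the trivial subbundle $H^0(E_0\otimes F(-p))\otimes{\mc O}_{{\mathbb P}(F_p^*)}$, so in particular $\psi_{F,p}^*{\mc F}(-j)$ has degree $\le 0$, and its restriction to a general line in ${\mathbb P}(F_p^*)$ is trivial. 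Using Lemma \ref{l16} (with a suitable finite set $p_1,\dots,p_m$ of points, to be chosen after we know the rank of $\mc F$), we may assume that $\mc F$ is locally free in codimension one along the images $\psi_{F,p_i}({\mathbb P}(F_{p_i}^*))$ and that the fibre map ${\mc F}_E\to\mcwx(E_0)_E$ is injective, so that the pulled-back sheaf really does compute the restriction of $\mc F$ up to a subset of codimension $\ge 2$.

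The next step is to translate "image is zero for all $p$" into a statement about the fibre of $\mc F$ at a fixed general point $E$. Here I would use the compatibility of the various $\psi_{F,p}$: as $p$ varies over $C$ and $F$ over $(0,1)$-stable bundles with determinant $\xi(p)$, the images $\psi_{F,p}({\mathbb P}(F_p^*))$ sweep out a dense subset of $\mcmx$ (indeed they cover a neighbourhood of any general $E$), and the Hecke description identifies $\mcwx(E_0)_E=H^0(E_0\otimes F)$ in a way compatible with the sequence \eqref{eq9}. Concretely, for a general $E$ lying in $\psi_{F,p}({\mathbb P}(F_p^*))$, the fibre $\mcwx(E_0)_E\otimes\det$-twists is $H^0(E_0\otimes E)$, and the composite $\psi^*_{F,p}\mcwx(E_0)\to\Omega_{\mathbb P}(1)\otimes(E_0)_p$ is, fibrewise at the point $[E]$, the evaluation-type map $H^0(E_0\otimes E)\to (E_0)_p\otimes(\text{cotangent data})$ coming from the extension \eqref{eq8}. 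Saying the image of $\mc F$ is zero for all $p$ then forces $\mc F_E$, viewed inside $H^0(E_0\otimes E)$, to be killed by all these maps simultaneously; running over all $p\in C$ (and all directions at $E$, since $F$ is general) this says $\mc F_E$ consists of sections of $E_0\otimes E$ whose image in $H^0$ of every Hecke modification vanishes, which is impossible unless $\mc F_E=0$, i.e. $\mc F$ has rank $0$ — contradicting that $\mc F$ is a proper nonzero subsheaf. (One must check $\mc F\neq 0$; if $\mc F=0$ there is nothing to prove, so we may assume $\operatorname{rk}\mc F\ge 1$.)

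I expect the main obstacle to be precisely this last translation: making rigorous the claim that a section $s\in H^0(E_0\otimes E)$ whose image in $H^0(E_0\otimes F/E_0\otimes E)$-type quotients vanishes for every Hecke modification $E\hookrightarrow F$ at every point $p$ must itself vanish, together with checking that the genericity of $E$ and $F$ is compatible with varying $p$ over all of $C$ (so that one genuinely tests all points). The degree hypotheses $nd_0+n_0d>n_0n(2g-2)$, or generality of $E_0$ and $nd_0+n_0d>n_0ng-n_0$, enter here exactly as in Lemma \ref{l19} to guarantee $h^1(E_0\otimes F(-p))=0$, hence that \eqref{eq9} holds and the ranks in the various evaluation maps are as expected; this is what lets the counting go through. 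A cleaner route, which I would try first, is to mimic verbatim the argument of \cite{bbgn} for the case $E_0=\mco$ (or a line bundle): there the analogous statement is proved by observing that $\psi^*_{F,p}\mc F(-j)\subset H^0(\cdots)\otimes\mathcal O$ for all $p$ would force $c_1(\mc F)\cdot[\text{curve of }\psi_{F,p}\text{'s}]\le 0$ in a way incompatible with $\mc F$ being a subsheaf of the (twisted) $\mcwx(E_0)$, whose relevant intersection numbers are pinned down by \eqref{eq9} and \eqref{eq12}. If that numerical contradiction survives the tensoring by $E_0$ — which it should, since \eqref{eq9} already records the effect of $E_0$ on degrees and ranks — then the lemma follows with only bookkeeping changes.
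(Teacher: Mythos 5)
Your framework is the right one, and you have correctly located the crux: everything reduces to showing that a nonzero element $v$ of the fibre ${\mc F}_E$ at a general $E$ cannot give rise to a section lying in $H^0(E_0\otimes F(-p))$ for every relevant $p$ and every generic Hecke modification $F$. But you flag this step as ``the main obstacle'' and do not supply an argument for it, and the fallback you propose (a numerical contradiction via $c_1$ and intersection with the $\psi_{F,p}$-curves) cannot replace it: if the image in $\Omega_{{\mathbb P}(F_p^*)}(1)\otimes(E_0)_p$ were zero you would only conclude $\deg\psi_{F,p}^*{\mc F}(-j)\le 0$, which is \emph{not} in conflict with ${\mc F}$ being a subsheaf of $\mcwx(E_0)$ and in particular does not yield the strict inequality with the bound $-1$ that Lemma \ref{l15} needs. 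The whole point of Lemma \ref{l17} is to produce a point $p$ with nonzero image so that Lemma \ref{l15} applies; a degree estimate alone cannot manufacture such a point.

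The missing idea is an elementary vanishing argument using semistability of $E_0\otimes E$. Choose in advance points $p_1,\dots,p_m\in C$ with $m>\frac{nd_0+n_0d}{n_0n}=\mu(E_0\otimes E)$; then $E_0\otimes E(-p_1-\cdots-p_m)$ is semistable of negative slope, so $H^0(E_0\otimes E(-p_1-\cdots-p_m))=0$. Take $E$ and $F$ as in Lemma \ref{l16} for this set of points; by part (ii) of that lemma a nonzero $v\in{\mc F}_E$ has nonzero image $s\in\mcwx(E_0)_E=H^0(E_0\otimes E)$, and the vanishing above forces $s(p_i)\ne 0$ for some $i$. Setting $p=p_i$ and choosing $F$ generically among the extensions \eqref{eq8} at $p$, one arranges $s\notin H^0(E_0\otimes F(-p))$ (the value $s(p)$ avoids the line $(F(-p))_p\subset E_p$ for generic $F$), and then the exactness of \eqref{eq9} shows the image of $\psi_{F,p}^*{\mc F}(-j)$ in $\Omega_{{\mathbb P}(F_p^*)}(1)\otimes(E_0)_p$ is nonzero. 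This is exactly the argument of \cite[p.\,567]{bbgn} that you suggest mimicking, but note that it is a section-evaluation argument, not the Chern-class computation you describe; without the choice of $m$ points and the semistability vanishing, your reduction does not close.
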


\begin{proof}
We follow the proof on p.567 of \cite{bbgn}. Choose points $p_1,\ldots,p_m\in C$ with $m>\frac{nd_0+n_0d}{n_0n}$ and choose $E$ and $F$ as in Lemma \ref{l16}. In particular, $F$ is $(0,1)$-stable, so Lemma \ref{l18} applies and \eqref{eq9} holds. Since $E_0\otimes E$ is semistable, $H^0(E_0\otimes E(-p_1-\ldots-p_m))=0$. Let $v$ be a non-zero element of ${\mc F}_E$. By Lemma \ref{l16}, the image $s$ of $v$ in $\mcwx(E_0)_E$ is non-zero. Since $H^0(E_0\otimes E(-p_1-\ldots-p_m))=0$, there exists $p:=p_i$ such that $s(p)\ne0$. By further restricting $F$, we can suppose that $s\not\in H^0(E_0\otimes F(-p))$. The result now follows from \eqref{eq9}.
\end {proof}

\begin{proof}[Proof of Theorem \ref{th2}(ii)] Let $\mc F$ be a proper subsheaf of $\mcwx(E_0)$ of rank $r$. Choose $p$ and $F$ as in Lemma \ref{l17} and let ${\mc F}_1$ be the image of $\psi_{F,p}^*{\mc F}(-j)$ in $\psi_{F,p}^*(\mcwx(E_0))(-j)$. In view of Lemma \ref{l17}, we can take ${\mc G}={\mc F}_1$ in Lemma \ref{l15}. By Lemma \ref{l16}(ii), the homomorphism $\psi_{F,p}^*{\mc F}(-j)\to{\mc F}_1$ is an isomorphism in the neighbourhood of $E$. By Lemma \ref{l16}(iii), the kernel of this homomorphism is supported on a subvariety of codimension at least $2$. It follows that the homomorphism is an isomorphism away from this subvariety. It follows from Lemma \ref{l15} that 
\[\frac{\deg\psi_{F,p}^*{\mc F}(-j)}r<\frac{\deg\psi_{F,p}^*(\mcwx(E_0))(-j)+n_0-1}{\rk\mcwx(E_0)}.\]
By \eqref{eq12}, we have
\[\frac{\lambda_{\mc F}}r=\frac{\deg\psi_{F,p}^*{\mc F}}r<\frac{\deg\psi_{F,p}^*(\mcwx(E_0))+n_0-1}{\rk\mcwx(E_0)}=\frac{\lambda_{\mcwx(E_0)}+(n_0-1)}{\rk\mcwx(E_0)}.\]
This completes the proof.
\end{proof}

\section{Picard sheaves on $\mcm$}\label{mcm}

In this section, we prove Theorem \ref{th3}.

\begin{lemma}\label{l6}
Suppose that $\gcd(n,d)=1$ and let $L_0$ be a line bundle of degree $d_0$. If $nd_0+d\ge n(2g-1)$, then $\mcw(L_0)$ is $\theta_{n,d}$-stable.
\end{lemma}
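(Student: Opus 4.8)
The plan is to reduce the $\theta_{n,d}$-stability of $\mcw(L_0)$ on $\mcm$ to the stability of its pullback under the morphisms $\phi_E$ and $\alpha_{L,E}$, where we can invoke Theorem \ref{c1}. Since $nd_0+d\ge n(2g-1)$, for any $E\in{\mc M}_{n,d+n}$ the bundle $L_0\otimes E$ has degree $nd_0+d+n\ge 2ng$, so it is generated with $h^1=0$; moreover $L_0\otimes E$ is stable because $E$ is stable and $L_0$ is a line bundle. Hence by Proposition \ref{th1}, $\phi_E^*(\mcw(L_0))\cong M_{L_0\otimes E}\otimes L'$, and by \cite[Theorem 1.2]{bu} (applied in the non-strict range, using $\deg(L_0\otimes E)\ge 2ng$) the bundle $M_{L_0\otimes E}$ is semistable, so $\phi_E^*(\mcw(L_0))$ is semistable; via \eqref{eq0} and Lemma \ref{l2}, $\alpha_{L,E}^*(\mcw(L_0))$ is $\theta_{1,d'}$-semistable for every $L\in\pic^{d'+1}(C)$.

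Next I would exploit the fact that the Abel--Jacobi-type images $\alpha_{L,E}(\pic^{d'}(C))$ sweep out $\mcm$ and that their cohomology classes are controlled. Concretely, the morphism $\alpha_{L,E}$ identifies $\pic^{d'}(C)$ with the fibre of $\mcm\to\pic^d(C)$ (sending a bundle to its determinant) through $E\otimes L^{-1}$, twisted by line bundles; so the restriction of $\theta_{n,d}$ to such a fibre is a theta line bundle $\theta_{1,d'}$ on $\pic^{d'}(C)$, and the image carries the class $c_1(\theta_{1,d'})^{g-1}/(g-1)!$ as in the Poincar\'e formula used in Lemma \ref{l2}. The point is then that a proper torsion-free subsheaf $\mc F\subset\mcw(L_0)$ restricts, on the generic such fibre, to a proper subsheaf of $\alpha_{L,E}^*(\mcw(L_0))$ of the same rank (away from a codimension-$\ge 2$ locus, exactly as in the proof of Lemma \ref{l2}), and $\theta_{1,d'}$-semistability of the restriction forces $\lambda_{\mc F}/\rk\mc F\le \lambda_{\mcw(L_0)}/\rk\mcw(L_0)$, giving $\theta_{n,d}$-semistability of $\mcw(L_0)$. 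To upgrade semistability to stability, I would suppose $\mc F$ destabilises with equality and derive a contradiction: either by using the strict inequality of \cite[Theorem 1.2]{bu} on a morphism $\phi_E$ with $L_0\otimes E$ of degree strictly greater than $2ng$ — which holds automatically since $nd_0+d\ge n(2g-1)$ gives $\deg(L_0\otimes E)=nd_0+d+n\ge 2ng+n>2ng$ — so that actually $\phi_E^*(\mcw(L_0))$ is genuinely stable for every $E$, hence no subsheaf can achieve equality on a generic $\phi_E(C)$.

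The main obstacle I anticipate is the bookkeeping that translates ``semistable (or stable) on a generic Abel--Jacobi curve/subvariety'' into ``$\theta_{n,d}$-stable on $\mcm$'': one must ensure that a given proper subsheaf $\mc F\subset\mcw(L_0)$, after pullback by $\phi_E$ (or $\alpha_{L,E}$) for generic $E$ (resp.\ generic $E,L$), remains a proper subsheaf of the \emph{full} rank-preserving type and that its first Chern class restricts correctly. This requires a dimension/genericity argument in the spirit of Lemma \ref{l1} and Lemma \ref{l2} — choosing $E$ (equivalently the curve $\phi_E(C)$ or the subvariety $\alpha_{L,E}(\pic^{d'}(C))$) avoiding the codimension-$\ge 2$ locus where $\mcw(L_0)$, $\mc F$, or the quotient fails to be locally free, and using that $\pic(\mcm)$ has rank one (so the numerical comparison on $\mcm$ is a single integer ratio $\lambda/\rk$) together with the Poincar\'e formula to match intersection numbers. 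Once this is set up, the actual inequality is immediate from Theorem \ref{c1} and \cite[Theorem 1.2]{bu}; the subtlety is purely in justifying the reduction to a generic complete intersection of Abel--Jacobi classes covering $\mcm$.
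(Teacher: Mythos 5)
There is a genuine gap, and it is exactly the point you dismiss as ``bookkeeping''. The subvarieties $\phi_E(C)$ and $\alpha_{L,E}(\pic^{d'}(C))$ do \emph{not} carry enough numerical information to test $\theta_{n,d}$-stability on $\mcm$ when $n\ge2$: unlike $\picd$, the Picard group of $\mcm$ is not cyclic (it is $\mathbb{Z}\oplus\pic(\picd)$ by Drezet--Narasimhan), so $c_1$ of a subsheaf $\mc F\subset\mcw(L_0)$ has a component along the fibres of the determinant map $\det\colon\mcm\to\picd$ and a component along the base. The images of $\alpha_{L,E}$ are (multi)sections of $\det$, so restriction to them only detects the base component; a subsheaf destabilising in the fixed-determinant direction would pass your test unnoticed. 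There is no Poincar\'e-formula statement identifying the class of $\phi_E(C)$ or of $\alpha_{L,E}(\pic^{d'}(C))$ with a multiple of the appropriate power of $c_1(\theta_{n,d})$, so the ``generic complete intersection of Abel--Jacobi classes'' you invoke does not exist. The paper instead pulls back along the finite covering $f''\colon\pic^0(C)\times\mcmx\to\mcm$, $(L_1,E)\mapsto L_1\otimes E$, shows the restriction to $\pic^0(C)\times\{E_1\}$ is $\theta_{1,0}$-semistable (this is where your Theorem~\ref{c1} argument lives) and the restriction to $\{L_1\}\times\mcmx$ is $\theta_{n,\xi}$-\emph{stable} by Theorem~\ref{th2}(ii) (the Hecke-correspondence result), and then concludes by the product lemma \cite[Lemma 2.2]{bbn} together with \cite[Proposition 4.8]{li}. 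The fixed-determinant stability input is indispensable and is entirely absent from your argument.

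There is also an arithmetic slip in your upgrade from semistability to stability: $nd_0+d\ge n(2g-1)$ gives $\deg(L_0\otimes E)=nd_0+d+n\ge 2ng$, not $\ge 2ng+n$, so in the boundary case $nd_0+d=n(2g-1)$ Butler's theorem only yields \emph{semistability} of $M_{L_0\otimes E}$, hence of $\phi_E^*(\mcw(L_0))$. Your claim that $\phi_E^*(\mcw(L_0))$ is ``genuinely stable for every $E$'' therefore fails precisely in the extremal case the lemma is designed to cover; this is another reason the paper routes the argument through the stability of $\mcwx(L_1\otimes L_0)$ rather than through strict inequalities on Abel--Jacobi curves.
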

\begin{proof} Suppose first that $nd_0+d>2ng$. Let $f':\mcm\stackrel{\cong}{\longrightarrow} {\mc M}_{n,nd_0+d}$ be defined by $f'(E)=L_0\otimes E$. Then
\[f'^*({\mc W}_{n,nd_0+d }(\mco))\cong \mcw(L_0).\]
The result now follows from \cite[Theorem 1]{li}.

Under the weaker assumption $nd_0+d\ge n(2g-1)$, consider the morphism
\[f'':\pic^0(C)\times \mcmx\longrightarrow\mcm:\ \ (L_1,E)\longmapsto L_1\otimes E.\]
This is a finite map, so $\mcw(L_0)$ is $\theta_{n,d}$-stable if $f''^*(\mcw(L_0))$ is $f''^*(\theta_{n,d})$-stable.

Now consider the restriction of $f''^*(\mcw(L_0))$ to a fibre $\pic^0(C)\times \{E_1\}$. From the definition, it follows that, if $L\in\pic^1(C)$,
\[f''^*(\mcw(L_0))|_{\pic^0(C)\times \{E_1\}}\cong\alpha_{L,E_1\otimes L}^*(\mcw(L_0)).\]
Since $L_0$ is a line bundle, it follows from Theorem \ref{c1}(b) that
$f''^*(\mcw(L_0))|_{\pic^0(C)\times \{E_1\}}$ is $\theta_{1,0}$-semistable. On the other hand, for $L_1\in\pic^0(C)$
\[f''^*(\mcw(L_0))|_{\{L_1\}\times\mcmx}\cong\mcwx(L_1\otimes L_0)\]
and this is $\theta_{n,\xi}$-stable by Theorem \ref{th2}(i). It follows from \cite[Proposition 4.8]{li} and \cite[Lemma 2.2]{bbn} that $\mcw(L_0)$ is $\theta_{n,d}$- stable.
\end{proof}

\begin{remark}\label{r2}\begin{em}\mbox{}
\begin{enumerate}
\item[(i)] Note that we require only one of the restrictions to be stable to apply \cite[Lemma 2.2]{bbn}; the other needs only to be semistable.

\item[(ii)] For $nd_0+n_0d\ge n_0n(2g-1)$, the same argument will prove that, if $\mcwx(E_0)$ is $\theta_{n,\xi}$-stable, then $\mcw(E_0)$ is $\theta_{n,d}$-stable.
\end{enumerate}
\end{em}\end{remark}

When $n_0\ge2$, the methods above do not currently work. Instead, we need to use an argument based on the use of spectral curves. Recall from \cite[Theorem 1 and Remarks 3.1 and 3.2]{bnr} (see also \cite[section 3.4]{li}) that, for any $n,d$, there exist a smooth irreducible $n$-sheeted covering $\pi:C'\longrightarrow C$ and an open set 
\[T^\delta:=\{L\in \pic^\delta(C')|\pi_*(L) \mbox{ is stable}\}\]such that the morphism $h:T^\delta
\longrightarrow \mcm$ defined by $h(L)=\pi_*(L)$ is dominant. Here
\begin{equation}\label{eq6}
\delta=d+n(n-1)(g-1),\ \ g(C')=n^2(g-1)+1
\end{equation}
and
\begin{equation}\label{eq7}
\pi_*({\mc O}_{C'})\cong {\mco}\oplus K_C^{-1}\oplus\cdots\oplus K_C^{1-n}.
\end{equation}

\begin{lemma}\label{l13}
Except when $g=n=2$ and $d$ is even, the complement of $T^\delta$ in $\pic^\delta(C')$ has codimension $\ge2$.
\end{lemma}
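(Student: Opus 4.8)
The plan is to prove Lemma \ref{l13} by realising $T^\delta$ as the complement in $\pic^\delta(C')$ of the locus of line bundles whose direct image fails to be stable, and to stratify that bad locus by the Harder--Narasimhan type of $\pi_*(L)$. Since we already know from \cite{dn} (as recalled at the start of Section \ref{mcmx}) that the complement of $\mcm$ inside the natural compactification $\overline{\mc M}_{n,d}$ has codimension $\ge2$ except when $g=n=2$ and $d$ is even, and the map $h:T^\delta\to\mcm$ is dominant with $\dim\pic^\delta(C')=g(C')=n^2(g-1)+1=\dim\mcm$ by \eqref{eq6}, it suffices to bound the dimension of the non-stable locus in $\pic^\delta(C')$ directly.

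First I would describe a destabilising subbundle: if $\pi_*(L)$ is not stable, there is a proper subbundle $G\subset\pi_*(L)$ of rank $r$, $1\le r\le n-1$, and degree $e$ with $e/r\ge\delta'/n$, where $\delta'=\deg\pi_*(L)$ is fixed (it equals $\delta+\deg\pi_*({\mc O}_{C'})$, computed from \eqref{eq7}). By the projection formula and adjunction, such a subbundle corresponds to a nonzero map $\pi^*G\to L$ on $C'$, equivalently a nonzero section of $L\otimes\pi^*G^*$. I would then count parameters: the vector bundles $G$ of bounded rank and degree on $C$ with $\mu(G)\ge\mu(\pi_*L)$ move in a family of dimension roughly $r^2(g-1)+1$ (the dimension of the relevant moduli stack, or an affine bound on a Quot/Hom scheme), and for each such $G$ the line bundles $L$ admitting a nonzero map from $\pi^*G$ and lying in a fixed component of $\pic^\delta(C')$ form a subvariety whose dimension is controlled by $h^0(L\otimes\pi^*G^*)$; when this $h^0$ jumps, $L$ is constrained to lie in a Brill--Noether-type locus of correspondingly high codimension in $\pic^\delta(C')$. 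Adding up, the bad locus has dimension strictly less than $g(C')-1$ except in the excluded case, which gives codimension $\ge2$.

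The main obstacle I anticipate is making the parameter count genuinely uniform over all destabilising types $(r,e)$ and over all components of $\pic^\delta(C')$, in a way that yields a clean codimension bound; the naive estimate on $\dim\{L: h^0(L\otimes\pi^*G^*)\ge 1\}$ together with the moduli dimension of $G$ can come out borderline, and one must be careful that the genus $g(C')=n^2(g-1)+1$ is large enough to absorb the slack. A cleaner route, which I would actually pursue, is to avoid re-doing the count: push forward the stratification of $\overline{\mc M}_{n,d}\setminus\mcm$ under (a completion of) $h$, or rather observe that $h^{-1}$ of the strata plus the loci where $h$ itself degenerates (where $\pi_*L$ is stable but the Hitchin-type correspondence degenerates) are exactly the bad loci in $\pic^\delta(C')$; since $h$ is dominant between varieties of the same dimension, and the target's bad locus has codimension $\ge 2$, one needs only that the indeterminacy/ramification locus of $h$ also has codimension $\ge 2$, which follows from the smoothness of $C'$ and a dimension count on the spectral data as in \cite{bnr}. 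Either way, the genuinely exceptional case $g=n=2$, $d$ even is handled by noting it coincides precisely with the exceptional case for $\overline{\mc M}_{n,d}$.

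Finally I would record, as in the analogous Lemma \ref{l12}, that the explicit form of the estimate actually shows the codimension grows with $\delta$ (equivalently with $g(C')$), so the bound is comfortable away from the single borderline case; this remark is not needed for the statement but clarifies why no further hypotheses on $d_0$ enter here, the dependence on $E_0$ being postponed to the subsequent application of Theorem \ref{c1}(b) and Lemma \ref{l15}-type arguments on $\mcm$.
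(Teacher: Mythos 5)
Your first strategy (stratify the locus where $\pi_*(L)$ is unstable by destabilising type and count parameters via the adjunction $\operatorname{Hom}(G,\pi_*L)=\operatorname{Hom}(\pi^*G,L)$) is exactly the argument behind the paper's proof, which simply cites \cite[Remark 5.2]{bnr}: that remark gives $\operatorname{codim}((T^\delta)^c)\ge 2g-2$ for $n\ge3$, and the same count for $n=2$ gives $\ge g-1$, with \emph{strict} inequality when $\delta$ is odd. But you stop short of carrying out the count and you yourself flag that it ``can come out borderline'' --- and it genuinely is borderline in precisely the one case that matters, $n=2$, $g=2$, where the crude bound is only $g-1=1$. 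The whole content of the lemma is the refinement that pushes this to $\ge2$ when $\delta=d+2(g-1)$ is odd, i.e.\ when $d$ is odd, and your proposal never identifies where the parity of $d$ enters the estimate. So the key step is missing rather than merely unverified.

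Your proposed ``cleaner route'' does not work. The complement of $T^\delta$ consists of $L$ with $\pi_*(L)$ \emph{not stable}; when $\pi_*(L)$ is unstable such an $L$ has no image in $\overline{\mc M}_{n,d}$ at all, and when $\pi_*(L)$ is strictly semistable the S-equivalence fibres are positive-dimensional, so the bad locus in $\pic^\delta(C')$ is in no useful sense a preimage under $h$ of the boundary $\overline{\mc M}_{n,d}\setminus{\mc M}_{n,d}$. Dominance of $h$ between equidimensional varieties gives no control whatsoever on the dimension of the locus where $h$ is undefined: a dominant map can perfectly well be undefined on a divisor. The coincidence of the exceptional case ($g=n=2$, $d$ even) with the exceptional case for the moduli space is a consequence of the two separate dimension counts, not a logical implication from one to the other, so it cannot be used to ``handle'' that case. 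You should abandon this route and instead complete the Brill--Noether-type count from your first paragraph, following \cite[Remark 5.2]{bnr}, isolating explicitly why oddness of $\delta$ improves the $n=2$ bound by one.
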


\begin{proof}
For $n\ge3$, it is proved in \cite[Remark 5.2]{bnr} that $\operatorname{codim}((T^\delta)^c) \ge2g-2$. For $n=2$, we can proceed as in this remark to obtain 
\[ \operatorname{codim}((T^\delta)^c)\ge g-1,\]
with strict inequality if $\delta$ is odd. This completes the proof.
\end{proof}

Let $\theta_{1,\delta}$ denote a $\theta$-bundle on $\pic^\delta(C')$. In \cite[Theorem 4.3]{li}, Li related the theta-bundle $\theta_{n,d}$ to $\theta_{1,\delta}$.

\begin{lemma}\label{l7}
Suppose that $\gcd(n,d)=1$ and let $\mc E$ be a vector bundle on $\mcm$. If $h^*({\mc E})$ extends to a $\theta_{1,\delta}$-stable (respectively, semistable) bundle on $\pic^\delta(C')$, then $\mc E$ is $\theta_{n,d}$-stable (respectively, semistable). Moreover, if $\phi _L^*(h^*(\mc E) )$ is stable for some $L\in\pic^\delta(C')$, then $\mc E$ is $\theta_{n,d}$-stable. 
\end{lemma}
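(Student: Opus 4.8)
The plan is to combine Li's identification of theta-bundles under the spectral construction with the codimension estimate of Lemma \ref{l13} and the comparison technique already used in Lemma \ref{l2}. First I would recall from \cite[Theorem 4.3]{li} the precise relationship between $h^*(\theta_{n,d})$ and $\theta_{1,\delta}$: up to a finite étale issue (handled because $\gcd(n,d)=1$), $h^*(\theta_{n,d})$ is a positive multiple of $\theta_{1,\delta}$ on $T^\delta$, so that for any vector bundle $\mc E$ on $\mcm$ the slope inequalities defining $\theta_{n,d}$-stability pull back to slope inequalities involving $\theta_{1,\delta}$ on $T^\delta$. Given a proper subsheaf $\mc F\subset\mc E$ with $\mc E/\mc F$ torsion-free, I would pull back along $h$ to get $h^*(\mc F)\subset h^*(\mc E)$ on $T^\delta$; since $h$ is dominant and $T^\delta$ is smooth, $h^*(\mc E)$ is again a vector bundle with a proper subsheaf of the same rank, and the slope of $h^*(\mc F)$ with respect to $h^*(\theta_{n,d})$ equals (a fixed positive multiple of) $\lambda_{\mc F}/\rk\mc F$.

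The key point is then to pass from $T^\delta$ to all of $\pic^\delta(C')$. By Lemma \ref{l13}, the complement $(T^\delta)^c$ has codimension $\ge 2$ (we are not in the excluded case $g=n=2$, $d$ even, since $\gcd(n,d)=1$), so a line bundle on $\pic^\delta(C')$ is determined by its restriction to $T^\delta$, and likewise $c_1$ of a coherent sheaf is unchanged by removing $(T^\delta)^c$; moreover a subsheaf of $h^*(\mc E)$ on $T^\delta$ extends to a subsheaf of the given extension on $\pic^\delta(C')$ after taking the saturation or simply the $j_*$-extension, without changing its degree against $\theta_{1,\delta}^{g(C')-1}$. Hence if the extension of $h^*(\mc E)$ is $\theta_{1,\delta}$-stable (resp. semistable), the extended subsheaf satisfies the strict (resp. non-strict) slope inequality, and restricting back to $T^\delta$ and using the proportionality of $h^*(\theta_{n,d})$ and $\theta_{1,\delta}$ gives $\lambda_{\mc F}/\rk\mc F < (\le)\ \lambda_{\mc E}/\rk\mc E$. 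This proves the first assertion.

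For the second assertion, suppose $\phi_L^*(h^*(\mc E))$ is stable for some $L\in\pic^\delta(C')$. Here I would invoke Lemma \ref{l2} applied on $C'$: the morphism $\phi_L:C'\to\pic^\delta(C')$ is the analogue on $C'$ of the Abel–Jacobi map, and $\phi_L^*$ of a torsion-free sheaf $\mc E'$ on $\pic^\delta(C')$ being stable forces $\mc E'$ to be $\theta_{1,\delta}$-stable. The only subtlety is that we are given stability of $\phi_L^*(h^*(\mc E))$ for a \emph{single} $L$, but $h^*(\mc E)$ is only defined on $T^\delta$, not yet known to extend; to apply Lemma \ref{l2} I first extend $h^*(\mc E)$ to its $j_*$-pushforward $\mc E'$ on $\pic^\delta(C')$ (using codimension $\ge 2$ of $(T^\delta)^c$), note that for generic $L$ the curve $\phi_L(C')$ meets $T^\delta$, and use the openness of stability together with the Poincaré formula exactly as in the proof of Lemma \ref{l2} to conclude $\mc E'$ is $\theta_{1,\delta}$-stable. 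Then the first part of the lemma applies and gives $\theta_{n,d}$-stability of $\mc E$.

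I expect the main obstacle to be bookkeeping around the extension across $(T^\delta)^c$: one must check that the saturated subsheaf on $\pic^\delta(C')$ restricts back to $h^*(\mc F)$ on $T^\delta$ with no loss in degree, and that the given ``extension'' of $h^*(\mc E)$ is compatible with this — i.e. that $\theta_{1,\delta}$-stability of the extension really does control subsheaves coming from $\mcm$. All of this is standard once the codimension-$2$ statement of Lemma \ref{l13} is in hand, so the proof is short; the real content is Lemma \ref{l13} and Li's theta-bundle identification, both already available.
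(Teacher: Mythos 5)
Your proposal is correct and follows essentially the same route as the paper: Li's identification $h^*(\theta_{n,d})\cong\theta_{1,\delta}^n|_{T^\delta}$, descent of stability along the dominant generically finite $h$ using the codimension-$2$ statement of Lemma \ref{l13}, and Lemma \ref{l2} for the second assertion. The only difference is that the paper outsources the descent step to \cite[Lemma 2.1]{bbn} (noting $\dim T^\delta=\dim\mcm$), whereas you unpack that argument by hand.
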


\begin{proof} By \cite[Theorem 4.3]{li}, we have
\[h^*(\theta_{n,d})\cong\theta_{1,\delta}^n|_{T^\delta}.\]
Since $\dim T^\delta=n^2(g-1)+1=\dim\mcm$, the first part of the result now follows from \cite[Lemma 2.1]{bbn}. The second part follows from Lemma \ref{l2}.
\end{proof}

\begin{lemma}\label{l8}
Let $E$ be a stable (respectively, semistable) bundle on $C$. Then
$\pi^*(E)$ is stable (respectively, semistable) on $C'$.
\end{lemma}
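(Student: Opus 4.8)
The plan is to show that pulling back a (semi)stable bundle under the spectral cover $\pi:C'\to C$ preserves (semi)stability by a slope argument combined with a comparison of subbundles on $C'$ with their pushforwards on $C$. First I would note that $\pi$ is a finite flat morphism of smooth curves of degree $n$, so for any coherent sheaf $G$ on $C'$ one has $\deg(\pi_*G)=\deg G+\rk(G)\cdot\deg(\pi_*\mathcal{O}_{C'})$ and $\rk(\pi_*G)=n\cdot\rk(G)$; combining these gives a clean relation between the slopes $\mu(\pi_*G)$ and $\mu(G)$, namely $\mu(\pi_*G)=\mu(G)+c$ where $c=\frac{1}{n}\deg(\pi_*\mathcal{O}_{C'})$ is independent of $G$. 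In particular, by \eqref{eq7}, $\deg(\pi_*\mathcal{O}_{C'})=-n(n-1)(g-1)/2$, so $c$ is a fixed rational number; the precise value is not needed, only that it is the same constant for every $G$.

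Next I would argue by contraposition. Suppose $\pi^*E$ is \emph{not} semistable (resp. not stable) on $C'$, and let $S\subset\pi^*E$ be a subbundle with $\mu(S)\ge\mu(\pi^*E)$ (resp. $>$). Since $\pi^*E$ is pulled back from $C$, it carries an action of the (possibly non-Galois) situation handled via the trace/norm, but the cleanest route is: push forward the inclusion $S\hookrightarrow\pi^*E$ to get $\pi_*S\hookrightarrow\pi_*\pi^*E$, and then use the natural surjection $\pi_*\pi^*E\cong E\otimes\pi_*\mathcal{O}_{C'}\twoheadrightarrow E$ (coming from the trace map $\pi_*\mathcal{O}_{C'}\to\mathcal{O}_C$, which splits since $\pi$ is separable). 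Composing, one obtains a nonzero map $\pi_*S\to E$ whose image is a nonzero subsheaf $F\subseteq E$. The slope of $\pi_*S$ equals $\mu(S)+c$, which by assumption is $\ge\mu(\pi^*E)+c=\mu(E)$ (resp. $>$); since $\pi_*S$ is semistable-dominated by... — more carefully, one shows $\mu(F)\ge\mu(\pi_*S)$ fails in general, so this naive composition needs a quotient-side refinement instead.

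Because of that subtlety, I would instead run the argument on the quotient side, which is the standard trick: if $\pi^*E$ is destabilised, then it has a quotient $\pi^*E\twoheadrightarrow Q$ with $\mu(Q)\le\mu(\pi^*E)$ (resp. $<$). Now use adjunction: $\mathrm{Hom}(\pi^*E,Q)=\mathrm{Hom}(E,\pi_*Q)$, so the quotient map corresponds to a nonzero map $E\to\pi_*Q$; let $Q'$ be its image, a nonzero quotient of $E$. One checks $\mu(Q')\le\mu(\pi_*Q)=\mu(Q)+c\le\mu(\pi^*E)+c=\mu(E)$ (resp. with a strict inequality surviving in the stable case, using that a nonzero map from the semistable — in fact stable — bundle $E$ to $\pi_*Q$ of the same slope would have to be injective, forcing $Q'=E$ and hence, after pulling back, $Q=\pi^*E$, a contradiction). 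Either way $\mu(Q')\le\mu(E)$ with $Q'$ a proper nonzero quotient, contradicting the (semi)stability of $E$. Hence $\pi^*E$ is (semi)stable.

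The main obstacle is the bookkeeping in the stable case: one must ensure the strict inequality is not lost when passing from $Q$ on $C'$ to its image $Q'$ on $C$, and this requires the observation that $E$ stable of slope $\mu(E)$ admits no nonzero map to a bundle of slope $\le\mu(E)$ unless that map is injective with cokernel forcing a contradiction upstairs. A slick alternative that avoids this entirely is to invoke the Hermitian–Einstein correspondence, exactly as in the proof of Lemma \ref{l10}: a stable (resp. polystable) bundle admits a Hermitian–Einstein metric, its pullback under the finite map $\pi$ is again Hermitian–Einstein (the curvature condition is preserved since $\pi$ is a local biholomorphism away from a finite set, and the Kähler form pulls back to a Kähler form in the same class up to scaling), hence $\pi^*E$ is polystable; it is then stable because $E$ stable implies $\pi^*E$ is simple — or, if one only wants the semistable statement, polystability already suffices. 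I would present the algebraic proof as the main argument and remark that the differential-geometric one gives an alternative, noting that it also makes transparent why polystability is preserved.
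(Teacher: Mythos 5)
Your proposed algebraic argument has a genuine gap at the step $\mu(Q')\le\mu(\pi_*Q)$. The sheaf $Q'$ is only a \emph{subsheaf} of $\pi_*Q$, and a subsheaf of an arbitrary bundle can have slope strictly larger than that of the ambient bundle; to conclude you would need $\mu_{\max}(\pi_*Q)<\mu(E)$, i.e.\ control of the Harder--Narasimhan filtration of $\pi_*Q$, which amounts to knowing that $\pi_*$ of a semistable bundle is semistable. That statement is of essentially the same depth as the lemma itself (it is normally proved by passing to the Galois closure of $\pi$ and descending the HN filtration, or deduced from the very fact you are trying to prove), so the argument is circular as written. This is exactly the symmetric difficulty you correctly identified on the subbundle side; moving to the quotient side does not remove it. (There are also minor slope slips: $\mu(\pi_*G)=\mu(G)/n+c$, not $\mu(G)+c$, and $\mu(\pi^*E)=n\mu(E)$; these are repairable but the chain of inequalities as displayed does not parse.) The paper simply quotes \cite[Theorem 2.4]{BS} for the semistable case rather than reproving it.

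Your differential-geometric alternative is closer to what the paper does: polystability of $\pi^*(E)$ is taken from \cite[Proposition 2.3]{BS}. But the promotion from polystable to stable is where the paper's actual work lies, and you assert it (``$E$ stable implies $\pi^*E$ is simple'') without proof. As a statement about general finite covers this is \emph{false}: for an \'etale cover with $\pi_*\mathcal{O}_{C'}=\mathcal{O}_C\oplus\eta\oplus\cdots$ and $E\otimes\eta\cong E$, the pullback $\pi^*(E)$ is neither simple nor stable. The paper's proof is precisely the missing computation: by the projection formula,
\[
H^0(\operatorname{End}(\pi^*(E)))=H^0(\operatorname{End}(E)\otimes\pi_*(\mathcal{O}_{C'})),
\]
and for the spectral cover the decomposition \eqref{eq7} shows that all summands other than $\mco$ have negative degree, so they contribute nothing to $H^0$ of the tensor with the degree-zero semistable bundle $\operatorname{End}(E)$; hence $h^0(\operatorname{End}(\pi^*(E)))=1$ and $\pi^*(E)$ is simple, hence stable. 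Without invoking the specific structure of $\pi_*(\mathcal{O}_{C'})$, the stable case cannot be concluded.
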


\begin{proof}
For $E$ semistable, this is \cite[Theorem 2.4]{BS}.

Now assume that $E$ is stable. We know that $\pi^*(E)$ is polystable on $C'$
\cite[Proposition 2.3]{BS}. Moreover,
\[H^0(\operatorname{End}(\pi^*(E)))=H^0(\pi_*(\operatorname{End}(\pi^*(E))))=H^0(\operatorname{End}(E)\otimes \pi_*({\mc O}_{C'}))={\mathbb C},\]
where the last equality comes from \eqref{eq7} and the fact that $E$ is simple and $\operatorname{End}(E)$ is semistable of degree $0$.
So $\pi^*(E)$ is simple and therefore stable.
\end{proof}

\begin{lemma}\label{l9} Suppose that $\gcd(n,d)\,=\,1$, $nd_0+n_0d\,>\,n_0n(2g-2)$ and
$E_0\,\in\,{\mc M}_{n_0,d_0}$. Then
\[({\mc W}_{1,\delta}(\pi^*(E_0)))|_{T^\delta}\,\cong\, h^*(\mcw(E_0))\, .\]
\end{lemma}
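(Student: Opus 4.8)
\textbf{Proof proposal for Lemma \ref{l9}.}

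The plan is to compare the two sheaves on $T^\delta$ by unwinding both sides as direct images on the relevant product spaces and identifying the corresponding Poincar\'e-type bundles, in the same spirit as the proof of Proposition \ref{th1}. First I would set up the fibre-product diagram: let $C' \xleftarrow{q_1} \pic^\delta(C') \times C' \xrightarrow{q_2} \pic^\delta(C')$ be the projections, with $\mcp'$ a Poincar\'e bundle on $\pic^\delta(C') \times C'$, and let $1 \times \pi : \pic^\delta(C') \times C' \to \pic^\delta(C') \times C$ be the map induced by $\pi$. By definition, $\mcw_{1,\delta}(\pi^*(E_0)) = q_{1*}(q_2^*(\pi^*(E_0)) \otimes \mcp') = q_{1*}((1\times\pi)^*(p_2^*(E_0)) \otimes \mcp')$, and projection formula together with flat base change along $\pi$ (flat since $\pi$ is finite onto a smooth curve) lets me rewrite this as $q_{1*}(\mcp' \otimes (1\times\pi)^* p_2^*(E_0))$.

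Next I would identify the pullback $h^*(\mcw(E_0))$. Recall $h : T^\delta \to \mcm$ sends $L \mapsto \pi_*(L)$, and $\mcw(E_0) = p_{1*}(p_2^*(E_0)\otimes \mcu)$ on $\mcm \times C$. Pulling back by $h \times 1_C$ and applying base change (valid here because $nd_0 + n_0 d > n_0 n(2g-2)$ forces $h^1(E_0 \otimes E) = 0$ for every semistable $E$, in particular for $E = \pi_*(L)$ with $L \in T^\delta$, and this persists in families), we get $h^*(\mcw(E_0)) = q'_{1*}((h\times 1_C)^*(p_2^*(E_0) \otimes \mcu))$ on $T^\delta$, where $q'_1$ is the first projection of $T^\delta \times C$. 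The universal property of $\mcm$ identifies $(h\times 1_C)^*\mcu$, as a family of stable bundles over $T^\delta$ parametrised so that the fibre over $L$ is $\pi_*(L)$, with $(\pi_* \mathcal{L})$ for a suitable universal line bundle $\mathcal{L}$ on $T^\delta \times C'$ — here $\pi_*$ denotes the relative direct image along $1_{T^\delta} \times \pi$ — twisted by the pullback of a line bundle from $T^\delta$. Then, using that $(1_{T^\delta}\times\pi)_*$ and base change commute, together with the projection formula, I would rewrite $q'_{1*}((h\times 1_C)^*p_2^*(E_0) \otimes (1_{T^\delta}\times\pi)_*(\mathcal{L}))$ as a direct image over $T^\delta \times C'$ of $\mathcal{L} \otimes (\text{pullback of } E_0 \text{ to } C')$, which matches the expression for $\mcw_{1,\delta}(\pi^*(E_0))$ restricted to $T^\delta$ up to a twist by a line bundle from $T^\delta$. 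Since both Poincar\'e bundles are only defined up to such a twist, this ambiguity is harmless.

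The main obstacle I expect is the bookkeeping around the ambiguity in the choice of Poincar\'e/universal bundles and making the identification $(h\times 1_C)^*\mcu \cong ((1_{T^\delta}\times\pi)_*\mathcal{L}) \otimes q'^*_1(\text{line bundle})$ precise: the universal property of $\mcm$ only determines a universal bundle up to pullback of a line bundle from the base, so one has to check that the residual twist on $T^\delta$ does not affect the statement — which it does not, since $\mcw_{1,\delta}(\pi^*(E_0))$ itself depends on the choice of $\mcp'$ by exactly such a twist. A secondary technical point is verifying the base-change hypotheses: I need $h^1(E_0 \otimes \pi_*(L)) = 0$ for $L \in T^\delta$, which follows from semistability of $E_0 \otimes \pi_*(L)$ (a tensor product of semistable bundles over $C$, Remark \ref{r4}) and the numerical hypothesis $nd_0 + n_0 d > n_0 n(2g-2)$ via Riemann-Roch; I would also note $\pi_*(L)$ has rank $n$ and degree $\delta - n(n-1)(g-1) = d$ by \eqref{eq6} and \eqref{eq7}, so the slope computation is consistent. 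Once all identifications are in place, the isomorphism over $T^\delta$ is immediate.
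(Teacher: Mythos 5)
Your proposal is correct and follows essentially the same route as the paper: the key step in both is the identification $(1_{T^\delta}\times\pi)_*(\mc P^\delta|_{T^\delta\times C'})\cong(h\times 1_C)^*(\mcu)$ up to a twist by a line bundle pulled back from $T^\delta$, followed by tensoring with $p_2^*(E_0)$, the projection formula for the finite map $1_{T^\delta}\times\pi$, and base change on the $\mcm$ side. Your extra care about the base-change hypothesis $h^1(E_0\otimes\pi_*(L))=0$ and about the harmlessness of the Poincar\'e-bundle ambiguity only makes explicit what the paper leaves implicit.
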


\begin{proof}
Let ${\mcu}^\delta$ be a universal bundle on $\pic^\delta(C')\times C'$. Possibly after tensoring by a line bundle lifted from $T^\delta$, it follows from the definitions that
\[(1_{T^\delta}\times\pi)_*({\mcu}^\delta|_{T^\delta\times C'})\cong(h\times 1_C)^*(\mcu).\]
Now, tensoring both sides by $p_2^*(E_0)$ and taking direct images by $p_T$, we obtain
\[p_{T*}(p_2^*(\pi^*(E_0))\otimes{\mcu}^\delta|_{T^\delta\times C'})\cong p_{T*}(h\times 1_C)^*({\mcu}\otimes p_2^*(E_0))\]
on $T^\delta$. Using base change on the right hand side, this gives the result.
\end{proof}

\begin{proof}[Proof of Theorem \ref{th3}]
(i) is Lemma \ref{l6}.

(ii) Let $E_0\in{\mc M}_{n_0,d_0}$. By Lemma \ref{l8}, the bundle $\pi^*E_0$ on $C'$ is stable. Hence, by Theorem \ref{th1}(ii), ${\mc W}_{1,\delta}(\pi^*(E_0))$ is $\theta_{1,\delta}$-stable (respectively, semistable) on $\pic^\delta(C')$ provided that $nd_0+n_0\delta>n_0(2g(C')-1)$ (respectively, $\ge$). Using \eqref{eq6}, we see that this condition is equivalent to
\[nd_0+n_0d>n_0n(n+1)(g-1)+n_0 \mbox{ (respectively }\ge\mbox{)}.\]
The result now follows from Lemmas \ref{l7} and \ref{l9}.
\end{proof}

\begin{remark}\label{r6}\begin{em}
Suppose $nd_0+n_0\delta>n_0(g(C')-1)$. If ${\mc W}_{1,\delta}(\pi^*(E_0))$ were $\theta_{1,\delta}$-stable and Lemma \ref{l9} still held, then we would have $\mcw(E_0)$ $\theta_{n,d}$-stable for $nd_0+n_0d>n_0n(g-1)$. This is a plausible conjecture.
\end{em}\end{remark} 

\section{Deformations of Picard bundles: $n\ge2$}\label{defo}

In this section, we consider deformations of Picard bundles with a view to constructing and describing morphisms from moduli spaces of bundles on $C$ to moduli spaces of bundles on $\mcm$, thus obtaining fine moduli spaces for Picard bundles. We suppose throughout that 
\begin{equation}\label{eq73}n\ge2,\  \gcd(n,d)=1,\  nd_0+n_0d>n_0n(2g-2),
\end{equation}
 thus ensuring that our Picard sheaves are well defined and locally free. For technical reasons related to the use of Hecke correspondences in \cite{bisbn1}, we need to assume further that
\begin{equation}\label{eq72}
g\ge3;\ \ \mbox{if } g=3, n\ge4;\ \ \mbox{if } g=4, n\ge3.
\end{equation}
The following theorem summarises some relevant results from \cite{bisbn1}. Recall that the simple vector bundles over a scheme $X$ with fixed Chern character $\ch$ possess a coarse moduli space ${\mc M}_0(X)$ (see \cite[Corollary 6.5]{KO} for a proof of this fact in the analytic context - this space is possibly non-separated; by previous work of M. Artin, ${\mc M}_0(X)$ is an algebraic space). We define an equivalence relation on families of bundles $\{{\mc E}_s|s\in S\}$ parametrised by a scheme $S$ as follows: for two families ${\mc E}$, ${\mc E}'$, we write 
\begin{equation}\label{eq79}
{\mc E}\sim{\mc E}'\Longleftrightarrow{\mc E}\cong{\mc E}'\otimes p_S^*(L)\mbox{ for some line bundle }L\mbox{ on }S.
\end{equation}
As in \cite[Section 7]{bisbn1}, when $\gcd(n_0,d_0)=1$, let $\widehat{{\mc U}_\xi}$ be the bundle on $\mcmo\times\mcmx\times C$ defined by
\[\widehat{{\mc U}_\xi}:=p_{13}^*({\mc U}_0)\otimes p_{23}^*({\mc U}_\xi),\]
where ${\mc U}_0$ is a universal bundle on $\mcmo\times C$, and let 
\[\widehat{{\mc W}_\xi}:=p_{12*}(\widehat{{\mc U}_\xi}).\]

\begin{theorem}\label{th5} Suppose that \eqref{eq73} and \eqref{eq72} hold and let $\ch$ denote the Chern character of the Picard bundles $\mcwx(E_0)$. Then
\begin{itemize}
\item[(i)] for any vector bundle $E_0$ of rank $n_0$ and degree $d_0$ which is both semistable and simple, the Picard bundle $\mcwx(E_0)$ is simple;
\item[(ii)] the formula $E_0\mapsto\mcwx(E_0)$ defines a morphism
\[\beta_\xi:\mcmo\lra{\mc M}_0(\mcmx);\]
\item[(iii)] $\beta_\xi$ is an open immersion and, if $\gcd(n_0,d_0)=1$, it is an isomorphism onto a smooth connected component of ${\mc M}_0(\mcmx)$;
\item[(iv)] if $\gcd(n_0,d_0)=1$, $\mcmo$ is a fine moduli space for deformations of Picard bundles on $\mcmx$ with Chern character $\ch$ with respect to the equivalence relation $\sim$ defined in \eqref{eq79} with universal object $\widehat{{\mc W}_\xi}$.
\end{itemize}\end{theorem}

\begin{proof} (i) is \cite[Corollary 21]{bisbn1}.

(ii) follows from \cite[Theorem 24]{bisbn1} and (i). Note that the assumption that $\gcd(n_0,d_0)=1$ is not needed \cite[Remark 25]{bisbn1}. 

(iii) is not formally stated in \cite{bisbn1} except when $\mcwx(E_0)$ is $\theta_{n,\xi}$-stable for all $E_0\in\mcmo$ \cite[Theorem 26]{bisbn1}, but follows from \cite[Theorem 24]{bisbn1} and the fact that, if $\gcd(n_0,d_0)=1$, the moduli space $\mcmo$ is complete.

(iv) Let $\{{\mc W}_s\}$ be a family of Picard bundles of the form $\mcwx(E_0)$ parametrised by a scheme $S$. Then, by (iii), there exists a morphism $\phi:S\to\mcmo$ defined by $\phi(s)=\beta_\xi^{-1}({\mc W}_s)$. Now let ${\mc W}':=(\phi\times id_{\mcmx})^*(\widehat{{\mc W}_\xi})$. Then ${\mc W}'_s\cong{\mc W}_s$ for all $s\in S$. Since these bundles are simple, it follows that $L:=p_{S*}(\Hom({\mc W}',{\mc W}))$ is locally free of rank {1} (see \cite[Ch III Corollary 12.9]{ha}). It follows easily that the natural homomorphism $p_S^*(L)\otimes {\mc W}'\to{\mc W}$ is an isomorphism. So ${\mc W}'\sim{\mc W}$ and $\mcmo$ is a fine moduli space as required with universal object $\widehat{{\mc W}_\xi}$. 
\end{proof}

\begin{remark}\label{r71}\begin{em}
The restriction of $\widehat{{\mc W}_\xi}$ to $\{E_0\}\times\mcmx$ is $\mcwx(E_0)$, while its restriction to $\mcmo\times\{E\}$ is ${\mc W}_{n_0.d_0}(E)$. If $\mcwx(E_0)$ is $\theta_{n,\xi}$-stable for some $E_0$ and ${\mc W}_{n_0.d_0}(E)$ is $\theta_{n_0,d_0}$-semistable for some $E$, then $\widehat{{\mc W}_\xi}$ is $\theta$-stable for $\theta=a\theta_{n,\xi}+b\theta_{n_0,d_0}$ with $a,b>0$ by \cite[Lemma 2.2]{bbn}. This holds, by Theorem \ref{th2}(i) and Theorem \ref{th1}(ii), if $n_0=1$ and $nd_0+d\ge n(2g-1)$.
\end{em}\end{remark}

Our main object in this section is to obtain a similar result to Theorem \ref{th5} for $\mcm$. We begin with a lemma. Recall that $\pic^0(\mcm)$ denotes the group of topologically trivial line bundles on $\mcm$.

\begin{lemma}\label{l11}
Suppose that \eqref{eq73} and  \eqref{eq72} hold and let $E_0,E_0'\in\mcmo$, $L,L'\in\pic^0({\mcm})$. If $L\otimes\mcw(E_0)\cong L'\otimes\mcw(E_0')$, then $E_0\cong E_0'$ and $L\cong L'$. Moreover $\mcw(E_0)$ is simple for all $E_0\in\mcmo$.
\end{lemma}

\begin{proof} Suppose that $L\otimes\mcw(E_0)\cong L'\otimes\mcw(E_0')$ and consider the morphism $\det:\mcm\to\picd$. Since $\pic{\mcmx}\cong{\mathbb Z}$, 
\[L\cong {\det}^*(L_0), L'\cong{\det}^*(L_0')\mbox{ with }L_0, L_0'\in\pic^0(\picd).\]
So
\begin{eqnarray}\label{eq74}0&\ne& H^0(\mcm,L\otimes\mcw(E_0)\otimes (L'\otimes\mcw(E_0'))^*)\\\nonumber&=&H^0(\picd,{\det}_*(\mcw(E_0)\otimes\mcw(E_0')^*)\otimes L_0\otimes L_0'^*).
\end{eqnarray}
So ${\det}_*(\mcw(E_0)\otimes\mcw(E_0')^*)\ne0$ and hence
\[H^0(\mcmx,\mcwx(E_0)\otimes\mcwx(E_0')^*)\ne0\] 
for all $\xi\in\picd$.
It follows from \cite[Theorem 20]{bisbn1} that $H^0(E_0\otimes E_0'^*)\ne0$. Since $E_0$, $E_0'$ are stable of the same slope, this implies that $E_0\cong E_0'$. 

Note now that $\mcw(E_0)\otimes\mcw(E_0)^*$ contains a subbundle ${\mc O}_{\mcw}$ generated by the identity endomorphism of $\mcw(E_0)$. By \cite[Corollary 21]{bisbn1}, $\mcwx(E_0)$ is simple, so the inclusion of $H^0({\mc O}_{\mcw})$ in $H^0(\mcm,\mcw(E_0)\otimes\mcw(E_0)^*)$ restricts to an isomorphism ${\mathbb C}\to H^0(\mcmx,\mcwx(E_0)\otimes\mcwx(E_0)^*)$ for all $\xi\in\picd$. So 
\begin{equation}\label{eq76}
{\det}_*(\mcw(E_0)\otimes\mcw(E_0)^*)\cong{\mathcal O}_{\picd}
\end{equation}
 and, by \eqref{eq74},
\[H^0(\picd,L_0\otimes L_0'^*)\ne0.\]
Since $L_0$ and $L_0'$ are both topologically trivial, it follows that $L_0\cong L'_0$ and hence $L\cong L'$.

Finally, it follows from \eqref{eq76} that $H^0(\mcm,\End(\mcw(E_0)))\cong{\mathbb C}$, so $\mcw(E_0)$ is simple.
\end{proof}
 
\begin{remark}\label{r11}\begin{em}
Note that, for any $L\in\pic(\mcm)$, $p_{1*}(p_2^*(E_0)\otimes p_1^*(L)\otimes{\mc U})=L\otimes\mcw(E_0)$. Since ${\mc U}':=p_1^*(L)\otimes{\mc U}$ is a universal bundle on $\mcm\times C$, it follows that $L\otimes\mcw(E_0)$ is itself a Picard bundle.
\end{em}\end{remark}

\begin{remark}\label{r7}\begin{em} If we write $f$ for the fundamental class of the curve $C$, then the Chern character of $E_0$ is $n_0+d_0f$ and the Todd class of $C$ is $1-(g-1)f$, so the Chern character of $\mcw(E_0)$ is given by
\begin{equation}\label{eq75}
\ch:=\operatorname{ch}(\mcw(E_0))=\operatorname{ch}(\mcu)(n_0+d_0f)(1-(g-1)f)[C]
\end{equation}
by Grothendieck-Riemann-Roch. Moreover, if $L\in\pic^0(\mcm)$, then $L\otimes\mcw(E_0)$ also has Chern character $\ch$. Since the integral cohomology of $\mcm$ is torsion-free (see \cite{ab}), if $L\in\pic(\mcm)$ is not topologically trivial, the Chern character of $L\otimes\mcw(E_0)$ is different from $\ch$. Hence the Picard bundles with Chern character $\ch$ are precisely the bundles of the form $L\otimes\mcw(E_0)$ with $E_0\in\mcmo$ and $L\in\pic^0(\mcm)$. Note also that $n_0$, $d_0$ can be recovered from $\ch$.
\end{em}\end{remark}
 
 Now suppose that $\gcd(n_0,d_0)=1$. Consider the bundle $p_{13}^*({\mc U}_0)\otimes p_{23}^*({\mc U})$ on $\mcmo\times\mcm\times C$ and define 
\[\widehat{{\mc W}_{n,d,n_0,d_0}}:=p_{12*}(p_{13}^*({\mc U}_0)\otimes p_{23}^*({\mc U})).\]
This is a bundle on $\mcmo\times\mcm$ and can be regarded as a family of bundles on $\mcm$ parametrised by $\mcmo$; the members of this family are the Picard bundles $\mcw(E_0)$ for $E_0\in\mcmo$ (compare the bundle $\widehat{{\mc W}_\xi}$ on $\mcmo\times\mcmx$ used in the proof of Theorem \ref{th5}).  The definition is symmetrical; $\widehat{{\mc W}_{n,d,n_0,d_0}}$ can also be regarded as a family of bundles on $\mcmo$ parametrised by $\mcm$ with members $\mcwoo(E)$ for $E\in\mcm$. Now let ${\mc M}_0(\mcm)$ be the moduli space of simple bundles on $\mcm$ with Chern character $\ch$. Using $\widehat{{\mc W}_{n,d,n_0,d_0}}$ and Lemma \ref{l11}, we see that
\[\beta:\pic^0(\mcm)\times \mcmo\lra{\mc M}_0(\mcm):\ \ \beta(L,E_0)=L\otimes\mcw(E_0)\]
is a morphism.
Note that $\beta$ is defined even if $\gcd(n_0,d_0)\ne1$. To see this, recall that there exists an \'etale covering $X$ of $\mcmo$ such that a universal bundle exists on  $ X\times C$ \cite[Proposition 2.4]{NR0}. This implies the existence of a morphism from $\pic^0(\mcm)\times X$ to ${\mc M}_0(\mcm)$, which  descends to $\beta$.

\begin{lemma}\label{l20} Suppose that \eqref{eq73} and \eqref{eq72} hold. Then the differential of $\beta$ defines an isomorphism of Zariski tangent spaces
\begin{equation}\label{eq716}T(\pic^0(\mcm)\times \mcmo)_{(L,E_0)}\lra T{\mc M}_0(\mcm)_{\beta(L,E_0)}\end{equation}
for all $E_0\in \mcmo$, $L\in\pic^0(\mcm)$. In particular,
\begin{equation}\label{eq717}
\dim T{\mc M}_0(\mcm)_{\beta(L,E_0)}=g+n_0^2(g-1)+1.
\end{equation}
\end{lemma}

\begin{proof}
Recall that the tangent space to $\pic^0(\mcm)\times \mcmo$ at $(L,E_0)$ is 
\[H^1(\mcm,{\mc O}_{\mcm})\oplus H^1(\End(E_0)),\]
while that to ${\mc M}_0(\mcm)$ at $\beta(L,E_0)$ is $H^1(\mcm,\End(\mcw(E_0)))$.  
By \eqref{eq76}, we have $\det_*(\End(\mcw(E_0)))={\mc O}_{\picd}$. So the Leray spectral sequence of $\det$ yields an exact sequence
\begin{eqnarray}\label{eq71}0\lra H^1(\picd,{\mc O}_{\picd})\stackrel{\gamma}{\lra}& H^1(\mcm,\End(\mcw(E_0)))\stackrel{\delta}{\lra}\\\nonumber\stackrel{\delta}{\lra}& H^0(\picd, R^1_{\det }(\End(\mcw(E_0)))).
\end{eqnarray}
Since $H^1(\mcmx,{\mc O}_{\mcmx})=0$ for all $\xi$, we can identify $H^1(\picd,{\mc O}_{\picd})$ with $H^1(\mcm,{\mc O}_{\mcm})$. Under this identification, $\gamma$ is just the differential of $(\beta|_{{\pic^0(\mcm)}\times \{E_0\}})_L$. Moreover, for any $L\in\pic^0(\mcm)$, the differential of $(\beta|_{\{L\}\times \mcmo})_{E_0}$ gives a linear map
\[H^1(\End(E_0))\lra H^1(\mcm,\End(\mcw(E_0))),\]
which can be composed with $\delta$ to give a linear map
\begin{equation}\label{eq711}
H^1(\End(E_0))\lra H^0(\picd, R^1_{\det }(\End(\mcw(E_0)))).
\end{equation} 
This in turn yields a homomorphism of bundles
\begin{eqnarray}\label{eq712}
H^1(\End(E_0))\otimes{\mc O}_{\picd}\lra &H^0(\picd, R^1_{\det }(\End(\mcw(E_0))))\otimes{\mc O}_{\picd}\\\nonumber&\stackrel{ev}{\lra} R^1_{\det }(\End(\mcw(E_0))).
\end{eqnarray}
On the fibre of $\det$ over $\xi\in\picd$, \eqref{eq712} gives a linear map
\begin{equation}\label{eq713}
H^1(\End(E_0))\lra H^1(\mcmx,\End(\mcwx(E_0))).
\end{equation}
By construction, \eqref{eq713} is the infinitesimal deformation map for $\mcwx(E_0)$, which is an isomorphism for all $\xi$ by  \cite[Theorem 24]{bisbn1}, implying that \eqref{eq712} is an isomorphism. Hence \eqref{eq711} is also an isomorphism and the sequence \eqref{eq71} splits. It follows that \eqref{eq716} is an isomorphism. Since
\[h^1(\mcm,{\mc O}_{\mcm})=h^1({\mc M}_{1,d},{\mc O}_{{\mc M}_{1,d}})=h^1({\mc O}_C)=g\]
and $h^1(\End(E_0))=n_0^2(g-1)+1$, \eqref{eq717} now follows.
\end{proof}

\begin{remark}\label{r9}\begin{em}
Since the map $\delta$ in \eqref{eq71} is surjective, it follows from the Leray spectral sequence that there is an injective map
\[H^2(\picd,{\mc O}_{\picd})\lra H^2(\mcm,\End(\mcw(E_0))).\]
In particular $H^2(\mcm,\End(\mcw(E_0)))\ne0$, so in principle the infinitesimal deformations of $\mcw(E_0)$ could be obstructed. Lemma \ref{l20} shows that in fact they are not obstructed. Note also that, by \eqref{eq76},
\[R^0_{\det}(\End(\mcw(E_0)))={\det}_*(\mcw(E_0)\otimes\mcw(E_0)^*)\cong{\mathcal O}_{\picd}\]
and, by \eqref{eq712},
\[R^1_{\det }(\End(\mcw(E_0)))\cong H^1(\End(E_0))\otimes{\mc O}_{\picd}.\]
Moreover, it follows from \cite[Proposition 14 and (11)]{bisbn1} that $R^i_{\det}(\End(\mcw(E_0)))=0$ for all $E_0\in\mcmo$ if
\[2\le i\le \min\{r(n-r)(g-1)+(ne-r(d+1))\}-3,\]
the minimum being taken over all values of $r$, $e$ satisfying $0<r<n$, $rd\le ne$.  
\end{em}\end{remark}

\begin{corollary}\label{prop1} Suppose that \eqref{eq73} and \eqref{eq72} hold. Then $\beta$ is an open immersion and ${\mc M}_0(\mcm)$ is smooth of dimension $g+n_0^2(g-1)+1$ at $\beta(L,E_0)$.
\end{corollary}
\begin{proof}
Since $\pic^0(\mcmx)=\{{\mc O}_{\mcmx}\}$, we have a natural isomorphism
\begin{equation}\label{eq77}
\pic^0(\mcm)=\pic^0(\picd)\cong\pico.
\end{equation} Hence, by Lemma \ref{l11}, the dimension of $\Image\beta$ at $(L,E_0)$ is equal to $g+n_0^2(g-1)+1$. The result now follows from Lemma \ref{l20}.
\end{proof}

\begin{remark}\label{r10}\begin{em}
In \cite{bisbn1}, we obtained an inversion formula for $\beta_\xi$. We can use this to obtain a similar formula for $\beta$. In the first place, for any $L\in\pic^0(\mcm)$, we have $L\cong\det^*(L')$ with $L'\in\pic^0(\picd)$. It follows that $L\otimes\mcw(E_0)|_{\mcmx}\cong\mcwx(E_0)$ for any $\xi\in\picd$.  We can therefore use \cite[Theorem 19]{bisbn1} to recover $E_0$. Now, it follows from \eqref{eq76} that
\[{\det}_*(L\otimes\mcw(E_0)\otimes\mcw(E_0)^*)\cong L',\]
which recovers $L$. To make this formula precise, let $B:=\Image\beta$; this is an open subset of ${\mc M}_0(\mcm)$ and $\beta$ maps $\pic^0(\mcm)\times{\mc M}_{n_0,d_0}$ isomorphically to $B$ by Corollary \ref{prop1}. Now fix $\xi\in\picd$. Then, for any $F\in B$, we have 
\begin{eqnarray}\label{eq715}
\beta^{-1}(F)&=&({\det}^*({\det}_*(F\otimes\mcw(R^1_{p_2}(p_1^*(F|_{\mcmx})\otimes{\mc U}_\xi^*\otimes p_2^*(K_C)))^*)),\\
&&\nonumber\quad\quad\quad\quad\quad\quad\quad\quad\quad\quad\quad R^1_{p_2}(p_1^*(F|_{\mcmx})\otimes{\mc U}_\xi^*\otimes p_2^*(K_C))),
\end{eqnarray}
where $p_1$, $p_2$ are the projections of $\mcmx\times C$ onto its factors.
\end{em}\end{remark}

We can now prove the main theorem of this section, which is a refined version of Theorem \ref{th15}. Before stating the theorem, we make a definition. Suppose that \eqref{eq73} and \eqref{eq72} hold and $\gcd(n_0,d_0)=1$. If we let $\mc L$ denote a universal bundle on $\pic^0(\mcm)\times\mcm$, we can consider the bundle $p^*_{13}({\mc L})\otimes p^*_{24}({\mc U}_0)\otimes p^*_{34}(\mcu)$ on $\pic^0(\mcm)\times\mcmo\times\mcm\times\ C$ and define
\begin{equation}\label{eq714}
\widetilde{{\mc W}_{n,d,n_0,d_0}}:=p_{123*}(p^*_{13}({\mc L})\otimes p^*_{24}({\mc U}_0)\otimes p^*_{34}(\mcu)).
\end{equation}
This is a bundle on $\pic^0(\mcm)\times\mcmo\times\mcm\cong\pico\times\mcmo\times\mcm$.

\begin{theorem}\label{th4}
Suppose that \eqref{eq73} and \eqref{eq72} hold. 
\begin{itemize}
\item[(i)] Let ${\mc M}^0_0(\mcm)$ denote the irreducible component of ${\mc M}_0(\mcm)$ which contains $\Image\beta$. Then 
\begin{equation}\label{eq78}
\beta:\pic^0(\mcm)\times{\mc M}_{n_0,d_0}\lra{\mc M}^0_0(\mcm)
\end{equation}
is an injective birational morphism. 
\item[(ii)] If, in addition, $\gcd(n_0,d_0)=1$, then ${\mc M}^0_0(\mcm)$ is isomorphic to $\pic^0(\mcm)\times{\mc M}_{n_0,d_0}$ and is smooth of dimension $g+n_0^2(g-1)+1$. Moreover  $\pic^0(\mcm)\times{\mc M}_{n_0,d_0}$ is a fine moduli space for deformations of Picard bundles on $\mcm$ with respect to the equivalence relation $\sim$ defined in \eqref{eq79}. The bundle $\widetilde{{\mc W}_{n,d,n_0,d_0}}$ is a universal object for this moduli space.
\item[(iii)] If $\gcd(n_0,d_0)=1$ and $nd_0+n_0d>n_0n(n+1)(g-1)+n_0$, then ${\mc M}^0_0(\mcm)$ is an irreducible component of the moduli space of $\theta_{n,d}$-stable bundles on $\mcm$ with Chern character $\ch$. Moreover, if
 \begin{equation}\label{eq710}
 nd_0+n_0d>\max\{n_0n(n+1)(g-1)+n_0,n_0n(n_0+1)(g-1)+n\},
 \end{equation}
then  the universal bundle $\widetilde{{\mc W}_{n,d,n_0,d_0}}$ is $\theta$-stable for $\theta=a\theta_0+b\theta_{n,d}+c\theta_{n_0,d_0}$ with $a,b,c>0$, where $\theta_0$ is any ample divisor on $\pic^0(\mcm)$.
\end{itemize}
\end{theorem}
\begin{proof} (i) follows at once from Corollary \ref{prop1}. 

(ii) When $\gcd(n_0,d_0)=1$, $\pic^0(\mcm)\times{\mc M}_{n_0,d_0}$ is a smooth projective variety, so \eqref{eq78} is surjective and is therefore an isomorphism by Lemma \ref{l20}. We now argue exactly as in the proof of Theorem \ref{th5}(iv) using $\widetilde{{\mc W}_{n,d,n_0,d_0}}$ in place of $\widehat{{\mc W}_\xi}$, noting that the restriction of $\widetilde{{\mc W}_{n,d,n_0,d_0}}$ to any factor $\{L\}\times\{E_0\}\times\mcm$ is $L\otimes\mcw(E_0)$. 

(iii) The first statement follows from Theorem \ref{th3}(ii). Now suppose that \eqref{eq710} holds. The restriction of $\widetilde{{\mc W}_{n,d,n_0,d_0}}$ to $\pic^0(\mcm)\times\{E_0\}\times\{E\}$ is isomorphic to 
\[{\mc L}|_{\pic^0(\mcm)\times\{E\}}\otimes H^0(E_0\otimes E),\] 
which is $\theta_0$-semistable for any ample divisor $\theta_0$ on $\pic^0(\mcm)$. Moreover, the restriction to $\{L\}\times\mcmo\times\{E\}$ is just $\mcwoo(E)$ and the restriction to $\{L\}\times\{E_0\}\times\mcm$ is $L\otimes\mcw(E_0)$.  If $\mcw(E_0)$ is $\theta_{n,d}$-stable for some $E_0\in\mcmo$ and $\mcwoo(E)$ is $\theta_{n_0,d_0}$-stable for some $E\in\mcm$, then $\widetilde{{\mc W}_{n,d,n_0,d_0}}$ is $\theta$-stable for $\theta:=a\theta_0+b\theta_{n,d}+c\theta_{n_0,d_0}$ with $a,b,c>0$ by \cite[Lemma 2.2]{bbn}. This applies in particular if \eqref{eq710} holds
by Theorem \ref{th3}(ii).
\end{proof}

\begin{remark}\label{r72}\begin{em}
When $n_0=1$, we can use Theorem \ref{th3}(i) to replace the inequality in the first part of (iii) by $nd_0+d\ge n(2g-1)$. Moreover, using also Theorem \ref{th1}(ii), we can replace \eqref{eq710} by the same inequality $nd_0+d\ge n(2g-1)$.
\end{em}\end{remark}

\section{Deformation of Picard bundles: $n=1$}\label{defo1}
The proofs in section \ref{defo} depend heavily on results for Picard bundles on $\mcmx$ and do not work for $n=1$. In this case, we need to return to the work of Kempf \cite{k1} and Mukai \cite{mu} for the case $n_0=1$. The following theorem summarises their principal results with relevance to our problem.

\begin{theorem}\label{th6} Suppose that $g\ge2$ and $d_0+d>2g-2$. Then
\begin{itemize}
\item[(i)] for any line bundle $L_0$ of degree $d_0$, the Picard bundle $\mcwd(L_0)$ is simple;
\item[(ii)] the morphism
\[\beta:\pic^0({\mc M}_{1,d})\times{\mc M}_{1,d_0}\longrightarrow{\mc M}_0({\mc M}_{1,d}),\]
defined by $\beta(L,L_0)= L\otimes\mcwd(L_0)$, is injective;
\item[(iii)] the differential of $\beta$ at $(L,L_0)$ is injective;
\item[(iv)] if either $g=2$ or $g\ge3$ and $C$ is not hyperelliptic, then $\pic^0({\mc M}_{1,d})\times{\mc M}_{1,d_0}$ is a fine moduli space for the deformations of Picard bundles $\mcwd(L_0)$ with universal bundle $\widetilde{{\mc W}_{1,d,1,d_0}}$ as in \eqref{eq714}. Moreover, $\widetilde{{\mc W}_{1,d,1,d_0}}$ is $\theta$-stable for $\theta:=a\theta_0+b\theta_{1,d}+c\theta_{n_0,d_0}$ with $a,b,c>0$, where $\theta_0$ is any ample divisor on $\pic^0({\mc M}_{1,d})$.
\end{itemize}\end{theorem}
\begin{proof} (i) For $L_0={\mc O}_C$, this is \cite[Corollary 6.5]{k1}. In general, note that, if $T_0:{\mc M}_{1,d}\stackrel{\cong}{\to}{\mc M}_{1,d_0+d}$ is defined by $T_0(L)=L\otimes L_0$, then $T_0^*({\mc W}_{d_0+d}({\mc O}_C))={\mc W}_{1,d}(L_0)$. Alternatively, this follows directly from Theorem \ref{th1}(i). See also \cite[Corollary 2 to Theorem 13]{g1}.

(ii) This is easily deducible from \cite[Proposition 9.1]{k1}. Since we shall be presenting a proof for general $n_0$ later (Lemma \ref{l22}), we omit the details.
 
 (iii) is the first part of \cite[Theorem 8.4]{k1}.
 
 (iv) If $g\ge3$ and $C$ is not hyperelliptic, the second part of \cite[Theorem 8.4]{k1} states that the differential of $\beta$ at $(L,L_0)$ is an isomorphism for all $(L,L_0)$. The case $g=2$ is covered at the end of \cite{k1}. Thus, in these cases, $\beta$ is an isomorphism onto $\Image\beta$, which is an irreducible component of ${\mc M}_0({\mc M}_{1,d})$.  The identification of the universal bundle is proved as in Theorem \ref{th5}(iv) and Theorem \ref{th4}(ii). For the proof of stability of $\widetilde{{\mc W}_{1,d,1,d_0}}$, see Remark \ref{r71}.
\end{proof}

\begin{remark}\label{r12}\begin{em} Kempf defines Picard bundles also in negative degree; with our notation, this is equivalent to taking $d_0+d<0$ and defining the Picard bundle as $R^1_{p_1}(p_2^*(L_0)\otimes{\mc U})$. In \cite{mu}, Mukai adopts a similar approach and follows Schwarzenberger \cite{s} in choosing a point $c\in C$ and defining Picard sheaves $F_e$ over $\pico$ by
\[F_e:=R^1_{p_1}(p_2^*({\mc O}_C(ec))\otimes{\mc U}).\]
He then studies deformations of $F_e$ for $e\le g-1$. In our notation, this is equivalent to considering $R^1_{p_1}(p_2^*(L_0)\otimes{\mc U})$ for $d_0+d\le g-1$. For $d_0+d<0$, this sheaf is locally free; moreover, if $\phi:{\mc M}_{1,d}\to{\mc M}_{1,2g-2-d}$ is given by $\phi(L)=L^{-1}\otimes K_C$, then it is dual to $\phi^*({\mc W}_{1,2g-2-d}(L_0^{-1}))$ (for a suitable choice of universal bundle on ${\mc M}_{1,2g-2-d}$) by relative Serre duality. Note however that  $p_{1*}(p_2^*(L_0)\otimes{\mc U})=0$ for $d_0+d\le g-1$, so $L_0$ satisfies the weak index theorem (WIT) in this wider range, which is where Mukai works. Mukai also handles the hyperelliptic case, when $\beta$ still maps $\pic^0({\mc M}_{1,d})\times{\mc M}_{1,d_0}$ surjectively to a component of ${\mc M}_0({\mc M}_{1,d})$, but this component is non-reduced when $g\ge3$; in fact, its Zariski tangent spaces have dimension $3g-2$ \cite[Lemma 4.9 and Remark 4.17]{mu}. The proof of Theorem \ref{th6} depends on a classical theorem of Max Noether  \cite{noe} asserting that the multiplication map
\begin{equation}\label{eq93}\mu:H^0(K_C)\otimes H^0(K_C)\lra H^0(K_C^2)
\end{equation}
is surjective if $C$ is not hyperelliptic. Note that $\Coker(\mu)$ coincides with the Koszul cohomology group $K_{0,2}(C,K_C)$ (see, for example, \cite[p6]{an} for the definition). So the surjectivity of $\mu$ is equivalent to the vanishing of this Koszul cohomology group. This is in fact the first case of Green's Conjecture (see \cite[p.58]{an}). 
\end{em}\end{remark}

Our object now is  to generalise Theorem \ref{th6} to the case $n_0\ge2$. We assume from now on that $d_0+n_0d>n_0(2g-2)$. It follows that $\mcwd(E_0)$ is locally free for all $E_0\in\mcmo$. We shall not however assume that $n_0\ge2$ except where this is explicitly stated. Before proceeding, we state some facts about Picard varieties.

In the first place, $\pico$ is an abelian variety and ${\mc M}_{1,d}$ is a principal homogeneous space for $\pico$ under the action
\[{\mc M}_{1,d}\times\pico\lra{\mc M}_{1,d}\ :\ (L,M)\mapsto M\otimes L.\]
There exists a line bundle ${\mc P}$ on ${\mc M}_{1,d}\times\pico$ (the Poincar\'e bundle) which is universal for topologically trivial line bundles on ${\mc M}_{1,d}$ and also for topologically trivial line bundles on $\pico$. For $M\in\pico$, we write ${\mc P}_M$ for the bundle ${\mc P}|_{{\mc M}_{1,d}\times\{M\}}$. To fix ${\mc P}$, we suppose that ${\mc P}_{{\mc O}_C}\cong{\mc O}_{{\mc M}_{1,d}}$. We fix also a point $c\in C$. We have the following properties:
\begin{itemize}
\item[I] The map $\pico\to\pic^0({\mc M}_{1,d})$ defined by $M\mapsto{\mc P}_M$ is an isomorphism of algebraic groups.
\item[II] There is a closed immersion $\iota:C\to\pico$ defined by $\iota(x)={\mc O}_C(x-c)$ for which $\iota^*:\pic^0(\pico)\to \pic^0(C)$ is an isomorphism. We can therefore take
$(1_{{\mc M}_{1,d}}\times \iota)^*{\mc P}$ as our universal bundle ${\mc U}$ on ${\mc M}_{1,d}\times C$. It follows that, for all $x\in C$,
\[{\mc U}_x:={\mc U}|_{{\mc M}_{1,d}\times\{x\}}\cong{\mc P}_{\iota(x)}.\]
\item[III] For any $L\in\pic^0({\mc M}_{1,d})$, $L\not\cong{\mc O}_{{\mc M}_{1,d}}$,
\[H^i({\mc M}_{1,d},L)=0\]
for all $i$.
\end{itemize}
(The properties I and III may be found in a more general context in \cite[Section 8]{mf}, which also covers the existence of ${\mc P}$, and the definition of $\iota$ is clearly stated at the beginning of \cite[Section 4]{mu}.)

\begin{lemma}\label{l21}
Suppose that  $d_0+n_0d>n_0(2g-2)$ and let $E_0, E_0'\in\mcmo$. Then 
\begin{equation}\label{eq82}
H^0(E_0\otimes E_0'^*)\cong H^0(\picd,\mcwd(E_0)\otimes \mcwd(E_0')^*)
\end{equation}
and, for all $i\ge1$, there exists a short exact sequence
\begin{eqnarray}\label{eq83}
0\lra H^1(\wedge^{i-1}N\otimes E_0\otimes E_0'^*)\lra& H^i(\picd,\mcwd(E_0)\otimes\mcwd(E_0')^*)\\
\nonumber&\lra H^0(\wedge^iN\otimes E_0\otimes E_0'^*)\lra0,
\end{eqnarray}
where $N$ is the normal bundle to $\iota(C)$ in $\pico$ and is defined by the exact sequence
\begin{equation}\label{eq84}
0\lra N^*\lra H^0(K_C)\otimes {\mc O}_C\stackrel{\operatorname{ev}}{\lra} K_C\lra0.
\end{equation}
\end{lemma}
\begin{proof}
Following \cite{k1}, we define a sheaf ${\mc R}$ on $C\times\picd\times C$ by
\[{\mc R}:=p_{12}^*({\mc U})\otimes p_{23}^*({\mc U}^*)\otimes p_3^*(K_C).\]
For $x\ne y\in C$, we have ${\mc U}_x\not\cong{\mc U}_y$  by II and hence $H^i(\picd,{\mc U}_x\otimes{\mc U}_y^*)=0$  by III. It follows that $R^i_{p_{13}}({\mc R})$ is supported on the diagonal of $C\times C$ and therefore can have non-zero cohomology in dimensions $0$ and $1$ only. Applying the Leray spectral sequence for $p_{13}$ to ${\mc R}\otimes p_1^*(E_0)\otimes p_3^*(E_0')$, we therefore obtain, for all $i\ge1$, a short exact sequence
\begin{eqnarray}\label{eq85}
0&\lra& H^1(C\times C,R^i_{p_{13}}({\mc R})\otimes p_1^*(E_0)\otimes p_2^*(E_0'^*))\\
\nonumber&&\lra H^{i+1}(C\times\picd\times C,{\mc R}\otimes p_1^*(E_0)\otimes p_3^*(E_0'^*))\\
\nonumber&&\lra H^0(C\times C,R^{i+1}_{p_{13}}({\mc R})\otimes p_1^*(E_0)\otimes p_2^*(E_0'^*))\lra0.
\end{eqnarray}
Since  $R^i_{p_{13}}({\mc R})$ is supported on the diagonal of $C\times C$, we have
\[H^j(C\times C,R^i_{p_{13}}({\mc R})\otimes p_1^*(E_0)\otimes p_2^*(E_0'^*))\cong H^j(p_{1*}(R^i_{p_{13}}({\mc R}))\otimes E_0\otimes E_0'^*).\]
Now $p_{1*}(R^i_{p_{13}}({\mc R}))\cong \wedge^{i-1}N$ by \cite[Lemma 6.3]{k1} and
\[H^{i+1}(C\times\picd\times C,{\mc R}\otimes p_1^*(E_0)\otimes p_3^*(E_0'^*))\cong H^i(\picd,\mcwd(E_0)\otimes\mcwd(E_0')^*)\]
by \cite[Proposition 1 and Remark 2]{bisbn1}. Substituting in \eqref{eq85}, we obtain \eqref{eq82} and \eqref{eq83}.
Finally, \eqref{eq84} follows from the fact that the restriction of the tangent bundle of $\pico$ to $\iota(C)$ is naturally isomorphic to $H^1({\mc O}_C)\otimes{\mc O}_C\cong H^0(K_C)^*\otimes{\mc O}_C$.
\end{proof}
\begin{remark}\label{r82}\begin{em}
By \eqref{eq84}, the bundle $N$ is isomorphic to $M_{K_C}^*$ (see \eqref{eq1}).
\end{em}\end{remark}

\begin{corollary}\label{c2}
Under the hypotheses of Lemma \ref{l21}, 
\begin{itemize}
\item[(i)]$\mcwd(E_0)$ is simple for all $E_0\in\mcmo$.
 \item[(ii)]if $E_0\not\cong E_0'$, then $\mcwd(E_0)\not\cong\mcwd(E_0')$.
 \end{itemize}
\end{corollary}
\begin{proof}
(i) Take $E_0'=E_0$ in Lemma \ref{l21}. Since $E_0$ is stable, we have $H^0(E_0\otimes E_0^*)\cong{\mathbb C}$. The result now follows from \eqref{eq82}. (For $d_0+n_0d>n_0(2g-1)$, the result follows directly from Theorem \ref{th1}(ii), but this is not quite sufficient for us.)

(ii) Since $E_0$ and $E_0'$ are both stable of the same slope, $H^0(E_0\otimes E_0'^*)=0$. The result follows at once from \eqref{eq82}.
\end{proof}

In view of Corollary \ref{c2}(i), we have a morphism 
\[\beta:\pic^0(\picd)\times\mcmo\lra{\mc M}_0(\picd)\]
defined by $\beta(L,E_0)=L\otimes\mcwd(E_0)$.
We now extend Corollary \ref{c2}(ii) to prove the injectivity of $\beta$.
\begin{lemma}\label{l22}
Suppose that  $d_0+n_0d>n_0(2g-2)$ and let $E_0, E_0'\in\mcmo$, $L,L'\in\pic^0({\mc M}_{1,d})$. If $L\otimes\mcwd(E_0)\cong L'\otimes\mcwd(E_0')$, then $E_0\cong E_0'$ and $L\cong L'$. Hence $\beta$ is injective and
\begin{equation}\label{eq95}
\dim {\mc M}_0({\mc M}_{1,d})|_{\beta(L,E_0)}\ge g+n_0^2(g-1)+1.
\end{equation}
\end{lemma}

\begin{proof}
We prove first that, for any $L\in\pic^0({\mc M}_{1,d})$,
\begin{equation}\label{eq90}
H^g({\mc M}_{1,d},L\otimes\mcwd(E_0))\ne0 \Longleftrightarrow L\cong{\mc P}_M \mbox{ with }M=\iota(x)^*\mbox{ for some }x\in C.
\end{equation}

We consider the bundle ${\mc U}\otimes p_1^*(L)\otimes p_2^*(E_0)$ on ${\mc M}_{1,d}\times C$. By the universal property of ${\mc P}$, we can write $L={\mc P}_M$ for some $M\in\pico$. By Serre duality, we have
\[H^g({\mc M}_{1,d},{\mc O}_{{\mc M}_{1,d}})\cong H^0({\mc M}_{1,d},{\mc O}_{{\mc M}_{1,d}})\ne0.\]
From this and III, we see that, for any $x\in C$,
\[H^g({\mc M}_{1,d},{\mc P}_{\iota(x)}\otimes{\mc P}_M\otimes (E_0)_x)\ne0\Longleftrightarrow M\cong \iota(x)^*.\]
In particular, if $M\cong \iota(x)^*$, the sheaf $R^g_{p_2}({\mc U}\otimes p_1^*(L)\otimes p_2^*(E_0))$ is supported on the point $x$. Since also $R^{g+1}_{p_2}({\mc U}\otimes p_1^*(L)\otimes p_2^*(E_0))=0$, it follows from the base change theorem \cite[Section 5, Corollary 2]{mf} that 
\[H^g({\mc M}_{1,d}\times C,{\mc U}\otimes p_1^*(L)\otimes p_2^*(E_0))\ne0\Longleftrightarrow M\cong \iota(x)^*.\]
Noting that $p_{1*}({\mc U}\otimes p_1^*(L)\otimes p_2^*(E_0))=L\otimes\mcwd(E_0)$, while $R^1_{p_1}({\mc U}\otimes p_1^*(L)\otimes p_2^*(E_0))=0$ since $d_0+n_0d>n_0(2g-2)$, we deduce \eqref{eq90}.

To prove the lemma, we can clearly assume that $L'$ is trivial. In this case, $L'\cong{\mc P}_{{\mc O}_C}$, so, by II and \eqref{eq90}, we have $H^g({\mc M}_{1,d},\mcwd(E_0'))\ne0$. Using \eqref{eq90} again, we have $L\cong{\mc P}_M$ with $M=\iota(x)^*$ for some $x\in C$. Now let $L_1\in\pic^0({\mc M}_{1,d})$ and consider the isomorphism
\[L_1\otimes L\otimes\mcwd(E_0)\cong L_1\otimes\mcwd(E_0').\]
Writing $L_1={\mc P}_{M_1}$, the left-hand side has non-zero $H^g$ if and only if $M_1\otimes \iota(x)^*\cong \iota(y)^*$ for some $y\in C$, while the right-hand side has non-zero $H^g$ if and only if $M_1\cong \iota(z)^*$ for some $z\in C$. Hence, for any $z$, there exists $y\in C$ such that $\iota(x)^*\otimes \iota(z)^*\cong \iota(y)^*$, or, equivalently, $x+z\sim y+c$ as divisors on $C$. Given $x$, this must be true for any $z$, which implies $x=c$ since $g\ge2$. This proves that $L$ is trivial  and $\mcwd(E_0)\cong \mcwd(E_0')$. By Corollary \ref{c2}(ii), it follows that $\beta$ is injective and hence \eqref{eq95} holds.
\end{proof}

The next step is to estimate the dimension of the space of infinitesimal deformations of $\mcwd(E_0)$ at $\beta(L, E_0)$.

\begin{lemma}\label{l23} Suppose that $n_0\ge2$, $d_0+n_0d>n_0(2g-2)$ and let $E_0\in\mcmo$. Then
\begin{equation}\label{eq88}
\dim H^1(\picd,\mcwd(E_0)\otimes\mcwd(E_0)^*)\ge g+n_0^2(g-1)+1,
\end{equation}
with equality if and only if $C$ is non-hyperelliptic and $h^0(N\otimes\operatorname{ad}(E_0))=0$.
\end{lemma}

\begin{proof}
\eqref{eq88} follows from Lemma \ref{l22} and the fact that $H^1(\picd,\mcwd(E_0)\otimes\mcwd(E_0)^*)$ is the Zariski tangent space of ${\mc M}_0(\picd)$ at $\beta(L,E_0)$ for any $L\in\pic^0(\picd)$. Taking $i=1$ in \eqref{eq83}, we obtain a short exact sequence
\begin{eqnarray}\label{eq86}
0\lra H^1(E_0\otimes E_0^*)\stackrel{\gamma}{\lra}& H^1(\picd,\mcwd(E_0)\otimes\mcwd(E_0)^*)\\
\nonumber&\stackrel{\delta}{\lra} H^0(N\otimes E_0\otimes E_0^*)\lra0.
\end{eqnarray}
It follows that we have equality in \eqref{eq88} if and only if $h^0(N\otimes E_0\otimes E_0^*)=g$. Since $E_0\otimes E_0^*\cong{\mc O}_C\oplus\operatorname{ad}(E_0)$ and $h^0(N)\ge g$ by \eqref{eq84}, this holds if and only if $h^0(N)=g$ and $h^0(N\otimes\operatorname{ad}(E_0))=0$. Now $h^0(N)=g$ by Noether's Theorem (see Remark \ref{r12}) if $C$ is not hyperelliptic but this fails for $C$ hyperelliptic of genus $g\ge3$ (in fact, in this case $N\cong H^{\oplus(g-1)}$, where $H$ is the hyperelliptic line bundle, and $h^0(N)=2g-2>g$). On the other hand, if $g=2$, then $N\cong K_C$ and
\[h^0(N\otimes\operatorname{ad}(E_0))=h^1(\operatorname{ad}(E_0))=n_0^2-1>0\]
if $n_0\ge2$. This completes the proof.
\end{proof}

Before proceeding, we obtain an alternative condition for equality in \eqref{eq88} in terms of Koszul cohomology.
 
 \begin{proposition}\label{prop5} Suppose that $n_0\ge2$, $d_0+n_0d>n_0(2g-2)$ and let $E_0\in\mcmo$. Then
 \begin{equation}\label{eq91}
\dim H^1(\picd,\mcwd(E_0)\otimes\mcwd(E_0)^*)= g+n_0^2(g-1)+1+\dim(K_{0,2}(C;E_0\otimes E_0^*,K_C)).
\end{equation}
Hence, if $C$ is non-hyperelliptic, equality holds in (8.8) if and only if 
\[K_{0,2}(C;\operatorname{ad}(E_0),K_C)=0.\]
\end{proposition}
 
\begin{proof} Dualising the sequence \eqref{eq84} and tensoring by $E_0\otimes E_0^*$, we obtain an exact cohomology sequence
\begin{eqnarray}\label{eq87}
0\lra& H^0(K_C)^*\otimes H^0(E_0\otimes E_0^*)\lra H^0(N\otimes E_0\otimes E_0^*)\\
\nonumber&\lra H^1(K_C^*\otimes E_0\otimes E_0^*)\stackrel{\alpha}{\lra}H^0(K_C)^*\otimes H^1(E_0\otimes E_0^*).
\end{eqnarray}
Since $H^0(E_0\otimes E_0^*)\cong{\mathbb C}$, it follows that $h^0(N\otimes E_0\otimes E_0^*)=g$ if and only if $\alpha$ is injective. Now note that the dual of $\alpha$ is the multiplication map
\begin{equation}\label{eq89}
\mu_{E_0}:H^0(K_C)\otimes H^0(K_C\otimes E_0\otimes E_0^*)\lra H^0(K_C^2\otimes E_0\otimes E_0^*).
\end{equation}
So equality holds in \eqref{eq88} if and only if $\mu_{E_0}$ is surjective. In fact, since, by definition, 
\[\Coker(\mu_{E_0})=K_{0,2}(C;E_0\otimes E_0^*,K_C).\]
we have the general formula \eqref{eq91}.

Now suppose $C$ is non-hyperelliptic. Since $E_0\otimes E_0^*\cong {\mc O}_C\oplus\operatorname{ad}(E_0)$, it follows from Noether's Theorem (see Remark \ref{r12}) that 
\[\Coker(\mu_{E_0})=K_{0,2}(C;\operatorname{ad}(E_0),K_C).\]
Hence, equality holds in \eqref{eq88} if and only if this Koszul cohomology group vanishes.
\end{proof}

We are now ready to state the main results of this section. 

\begin{theorem}\label{th7}
Suppose that $g\ge2$, $n_0\ge2$ and $d_0+n_0d>n_0(2g-2)$.  Then
\begin{itemize}
\item[(i)] the morphism $\beta:\pic^0({\mc M}_{1,d})\times\mcmo\to {\mc M}_0({\mc M}_{1,d})$ is injective;
\item[(ii)] if $C$ is non-hyperelliptic and $h^0(N\otimes\operatorname{ad}(E_0))=0$, $\beta$ is an open immersion in the neighbourhood of $(L,E_0)$ for all $L\in\pic^0({\mc M}_{1,d})$;
\item[(iii)]if $C$ is non-hyperelliptic and $h^0(N\otimes\operatorname{ad}(E_0))=0$ for some $E_0\in\mcmo$, $\beta$ is an injective birational morphism from $\pic^0({\mc M}_{1,d})\times\mcmo$ to an irreducible component ${\mc M}_0^0({\mc M}_{1,d})$ of 
${\mc M}_0({\mc M}_{1,d})$;
\item[(iv)] if $C$ is non-hyperelliptic, $h^0(N\otimes\operatorname{ad}(E_0))=0$ for some $E_0\in\mcmo$ and $\gcd(n_0,d_0)=1$, $\beta$ is a bijective morphism onto ${\mc M}_0^0({\mc M}_{1,d})$. If $d_0+n_0d>n_0(2g-1)$, then ${\mc M}_0^0({\mc M}_{1,d})$ is a component of the moduli space of $\theta_{1,d}$-stable bundles on ${\mc M}_{1,d}$ with Chern character $\ch$.
\end{itemize}
\end{theorem}

\begin{proof} (i) This is Lemma \ref{l22}.

(ii) When $C$ is non-hyperelliptic and $h^0(N\otimes\operatorname{ad}(E_0))=0$, we have, by Lemmas \ref{l22} and \ref{l23}, that the dimension of ${\mc M}_0({\mc M}_{1,d})$ at $\beta(L,E_0)$ is equal to the dimension of its Zariski tangent space. Hence ${\mc M}_0({\mc M}_{1,d})$ is smooth at $\beta(L,E_0)$. The result now follows from Zariski's Main Theorem.

(iii) When $C$ is non-hyperelliptic and $h^0(N\otimes\operatorname{ad}(E_0))=0$ for some $E_0$, it follows from (ii) and the fact that $\pic^0({\mc M}_{1,d})\times\mcmo$ is irreducible that $\Image\beta$ is contained in a unique irreducible component ${\mc M}_0^0(\mcm)$ of ${\mc M}_0(\mcm)$ and maps birationally to this component. 

(iv) Since $\gcd(n_0,d_0)=1$,  $\pic^0({\mc M}_{1,d})\times\mcmo$ is complete. It follows from (iii) that $\beta$ maps bijectively to ${\mc M}_0^0(\mcm)$. The last part follows from Theorem \ref{th1}(ii).
\end{proof}

Before considering whether the hypothesis of Theorem \ref{th7}(iii) is satisfied, we will show that, at least in many cases, there exists $E_0\in\mcmo$ such that $H^0(N\otimes\operatorname{ad}(E_0))>0$.

\begin{proposition}\label{prop2}
Suppose that $g\ge2$, $n_0\ge2$, $d_0+n_0d>n_0(2g-2)$ and $d_0\equiv1\bmod n_0$ or $-1\bmod n_0$. Then there exists $E_0\in \mcmo$ such that $h^0(N\otimes\operatorname{ad}(E_0))>0$. Hence, at least in these cases, $\beta$ does not map $\pic^0({\mc M}_{1,d})\times\mcmo$ isomorphically to ${\mc M}_0^0(\mcm)$.
\end{proposition}
\begin{proof} Suppose first that $d_0\equiv1\bmod n_0$ and write $d_0=en_0+1$. Let $M_0,\cdots,M_{n_0-2}$ be mutually non-isomorphic line bundles of degree $e$ on $C$. We consider extensions
\begin{equation}\label{eq07}
0\lra M_0\oplus\cdots\oplus M_{n_0-2}\lra E_0\lra M_0(p)\lra0
\end{equation}
with $p\in C$.
Note that ${\mc O}_C(-p)=\Hom(M_0(p),M_0)\subset \operatorname{ad}(E_0)$. Hence $N(-p)\subset N\otimes\operatorname{ad}(E_0)$. Since $N$ has rank $g-1$ and $h^0(N)\ge g$, we have $h^0(N(-p))>0$, so $h^0(N\otimes\operatorname{ad}(E_0))>0$. It remains to prove that $E_0$ can be stable. 

If $F$ is a subbundle of $E_0$ contradicting stability, $F$ must map surjectively to $M_0(p)$ and the kernel of the homomorphism $F\to M_0(p)$ must contradict stability. We therefore have an exact sequence
\begin{equation}\label{eq96}
0\lra M_{i_1}\oplus\cdots\oplus M_{i_s}\lra F\lra M_0(p)\lra0
\end{equation}
with $s<n_0-1$. The extensions \eqref{eq07} are classified by $(n_0-1)$-tuples $(e_0,\cdots,e_{n_0-2})$ with $e_i\in H^1(M_i\otimes M_0^*(-p))$. Since the $M_i$ are mutually non-isomorphic, the existence of \eqref{eq96} implies that $e_j=0$ for some $j$. This is not true for the general extension.

For $d_0\equiv-1\bmod n_0$, one can replace $E_0$ by $E_0^*$ and use the argument above.

The last statement follows from Lemma \ref{l23}.
\end{proof}

\begin{remark}\label{r83}\begin{em}(i)
Propositions \ref{prop5} and \ref{prop2} show that there exist stable bundles $E_0$ such that $K_{0,2}(C;E_0\otimes E_0^*,K_C)\ne0$. This contrasts with the situation for line bundles, where $K_{0,2}(C,K_C)=0$ by Noether's Theorem.

(ii) More precisely, if $E_0$ is as in Proposition \ref{prop2}, then, for any $L\in\pic^0({\mc M}_{1,d})$, there exist infinitesimal deformations of $L\otimes\mcwd(E_0)$ which do not arise from deformations of the pair $(L,E_0)$.
\end{em}\end{remark} 

As a result of this proposition, we cannot expect to strengthen the results of Theorem \ref{th7}(iii) and (iv). However, Montserrat Teixidor i Bigas  \cite{t} has proved that $\mu_{E_0}$ is surjective when both $C$ and $E_0$ are general. Her proof involves degenerating to a chain of elliptic curves. We therefore have the following corollary to Theorem \ref{th7}.

\begin{corollary}\label{th8}
Let $C$ be a general curve of genus $g\ge3$ and suppose that $n_0\ge2$ and $d_0+n_0d>n_0(2g-2)$. Then $\beta$ is an injective birational morphism from $\pic^0({\mc M}_{1,d})\times\mcmo$ to an irreducible component ${\mc M}_0^0({\mc M}_{1,d})$ of 
${\mc M}_0({\mc M}_{1,d})$.  If $\gcd(n_0,d_0)=1$, $\beta$ is a bijective morphism onto ${\mc M}_0^0({\mc M}_{1,d})$.
\end{corollary}
\begin{proof}
This follows from Theorem \ref{th7}(i), (iii) and (iv) and \cite{t}.
\end{proof}


\end{document}